\newtheorem{theorem}{Theorem}[section]
\newtheorem{lemma}[theorem]{Lemma}
\newtheorem{prop}[theorem]{Proposition}
\newtheorem{corollary}[theorem]{Corollary}
\theoremstyle{definition}
\newtheorem{definition}[theorem]{Definition}
\DeclarePairedDelimiter{\abs}{\lvert}{\rvert}
\DeclarePairedDelimiter{\abss}{\big\lvert}{\big\rvert}
\DeclarePairedDelimiter{\norm}{\lVert}{\rVert}
\newcommand{\E}{\mathbb{E}}
\newcommand{\R}{\mathbb{R}}
\newcommand{\N}{\mathbb{N}}
\newcommand{\V}{\mathbb{V}}
\renewcommand{\d}{{\mathrm d}}
\newcommand{\normm}[1]{ \big\lVert #1 \big\rVert }
\newcommand{\normmm}[1]{ \Big\lVert #1 \Big\rVert }
\newcommand{\Vnorm}[1]{{\left\vert\kern-0.25ex\left\vert\kern-0.25ex\left\vert #1 \right\vert\kern-0.25ex\right\vert\kern-0.25ex\right\vert}}
\newcommand{\indicator}[1]{\mathbbm{1}_{\smash{#1}}}
\title{
  \vspace{-1cm}
  On nonlinear Feynman-Kac formulas\\
  for viscosity solutions of semilinear\\
  parabolic partial differential equations\\
  with gradient-dependent
  nonlinearities}
\author{
Martin Hutzenthaler$^1$
and 
Katharina Pohl$^2$
\bigskip
\\
\small{$^1$ Faculty of Mathematics, University of Duisburg-Essen,}
\vspace{-0.1cm}\\
\small{Essen, Germany, e-mail: \texttt{martin.hutzenthaler}\textcircled{\texttt{a}}\texttt{uni-due.de}}
\smallskip
\\
\small{$^2$ Faculty of Mathematics, University of Duisburg-Essen,}
\vspace{-0.1cm}\\
\small{Essen, Germany, e-mail: \texttt{katharina.pohl}\textcircled{\texttt{a}}\texttt{uni-due.de}}
}
\begin{document}
\date{\today}
\maketitle

\begin{abstract}
\hskip -.2in
\noindent
The classical Feynman-Kac identity represents solutions of linear partial differential equations
in terms of stochastic differential euqations.
This representation has been generalized to nonlinear partial differential equations
on the one hand via backward stochastic differential equations
and on the other hand via stochastic fixed-point equations.
In this article we generalize the representation via stochastic fixed-point equations
to allow the nonlinearity in the semilinear partial differential equation
to depend also on the gradient of the solution.
\end{abstract}


\section{Introduction}
The classical Feynman-Kac identity
(see, e.g., \cite{GihmanSkorohod1972,HairerHutzenthalerJentzen2015,
	KaratzasShreve1991,
	PardouxRascanu2014}) 
is a representation of linear partial differential
equations (PDEs) in terms of stochastic
differential equations (SDEs).
This identity has various applications, e.g.,
the classical Monte Carlo method exploits this representation
and allows to approximate solutions of linear PDEs without suffering from the curse of
dimensionality.

There are different approaches for Feynman-Kac type formulas in the case of nonlinear PDEs.
One approach represents viscosity solutions (see, e.g., 
	\cite{CrandallEvansLions1984,CrandallIshiiLions1992,CrandallLions1983,HairerHutzenthalerJentzen2015,ImbertSilvestre2013})
  of nonlinear PDEs via solutions of backward stochastic differential
equations (BSDEs);
see, e.g., \cite{Bismut1976,PardouxPeng1990} for references on BSDEs and see, e.g., \cite{Antonelli1993,
	BarlesBuckdahnPardoux1997,
	HuPeng1995,
	HuYong2000,
	MaProtterYong1994,
	MaZhang2002,
	PardouxPeng1992,
	PardouxRascanu2014,
	PardouxTang1999,
	Peng1991,
  Zhang2017} for references on the connection between PDEs and BSDEs.
Another approach is via stochastic fixed-point equations (SFPEs) which arise
when the linear Feynman-Kac identity is applied to a semilinear PDE whose
nonlinear part is viewed as inhomogeneity; see, e.g., \cite{beck2021existence}.
A central motivation for this latter approach is that
full-history recursive multilevel Picard (MLP) approximation algorithms
(see \cite{beck2019overcoming,EHutzenthalerJentzenKruse2019,hutzenthaler2021multilevel,hutzenthaler2020overcoming}
for references on MLP approximation algorithms)
exploit the fact that viscosity solutions of semilinear PDEs are solutions of SFPEs.
MLP approximation algorithms are -- up to now -- the only methods
which have been mathematically proven to overcome the curse of dimensionality in the numerical approximation of solutions of semilinear Kolmogorov PDEs. 

In this article we generalize the results of \cite{beck2021existence}
from the case to gradient-independent nonlinearities to the gradient-dependent case
(e.g., the function $f$ in \eqref{eq:PDE.intro} depends on $\nabla_x u$).
To illustrate the findings of this article, we now present in Theorem \ref{thm:main_theorem} below
a special case of Theorem \ref{thm:vs} which is the main result of this article.

  \begin{theorem}\label{thm:main_theorem}
      Let $d \in \N$, 
      $\alpha, c, L, T \in (0, \infty)$,
      let $\langle\cdot,\cdot\rangle\colon
      \R^d\times\R^d\to\R$ be the standard
      Euclidean scalar product on $\R^d$,
      let $\norm{\cdot}\colon\R^d\to[0,\infty)$
      be the standard Euclidean norm on $\R^d$,
      let $\Vnorm{\cdot}\colon\R^{d+1}\to[0,\infty)$
      be the standard Euclidean norm on $\R^{d+1}$,
      let $\norm{\cdot}_F\colon \R^{d\times d}
      \to [0,\infty)$ be the Frobenius norm
      on $\R^{d\times d}$,
      let $(\Omega, \mathcal{F}, \mathbb{P}, (\mathbb{F}_s)_{s \in [0,T]})$ 
      be a filtered probability space
      satisfying the usual conditions,
      let $W \colon [0,T] \times \Omega \to \R^d$ 
      be a standard $(\mathbb{F}_s)_{s \in [0,T]}$-Brownian motion,
      let $\mu \in  C^{1}(\R^d, \R^d)$, 
      $\sigma \in C^{1} (\R^d, \R^{d \times d})$
      satisfy
      for all 
      $x, y \in \R^d$, $v\in\R^d$
      that
      \begin{equation}
      \label{eq:mu_sigma_loclip3}
      \max\Big\{\langle x-y,\mu(x)-\mu(y)\rangle, 
      \tfrac{1}{2}\norm{\sigma(x)-\sigma(y)}_F^2\Big\}
      \leq \tfrac{c}{2}\norm{x-y}^2,
      \end{equation}
      $\max \{
      \langle x, \mu(x)\rangle,
      \norm{\sigma(x)}_F^2
      \}
      \leq c (1+\norm{x}^2)$,
      and
      $v^* \sigma(x) (\sigma(x))^* v \geq \alpha \norm{v}^2$,
      assume for all 
      $j\in\{1,2,\ldots, d\}$ that
      $\frac{\partial \mu}{\partial x}$
      and $\frac{\partial \sigma}{\partial x_j}$
      are locally Lipschitz continuous,
      for every  $t\in [0,T]$,
      $x \in \R^d$ let
      $X^x_t = (X^{x}_{t,s})_{s \in [t,T]} \colon [t,T]
      \times \Omega \to \R^d$   
      be an 
      $(\mathbb{F}_s)_{s \in [t,T]}$-adapted 
      stochastic process with continuous
      sample paths satisfying that for all 
      $s \in [t,T]$ it holds a.s.\! that
      \begin{equation}
      X^x_{t,s} = x + \int_t^s \mu( X^x_{t,r}) \,\d r 
      + \int_t^s \sigma(X^x_{t,r}) \,\d W_r,
      \end{equation}
      assume for all
      $t\in [0,T]$,
      $\omega \in \Omega$
      that
      $\left([t,T] \times \R^d \ni (s,x) 
      \mapsto X^x_{t,s}(\omega) \in \R^d \right) \in C^{0,1}([t,T]$ 
      $\times \R^d, \R^d)$,
      for every $t\in[0,T]$, 
      $x \in \R^d$ let
      $Z^x_t = (Z^x_{t,s})_{s \in (t,T]} 
      \colon (t,T] \times \Omega \to \R^{d+1}$ 
      be an $(\mathbb{F}_s)_{s \in (t,T]}$-adapted
      stochastic process 
      with continuous sample paths
      satisfying that 
      for all $s \in (t,T]$ 
      it holds a.s.\! that
      \begin{equation}
      Z^x_{t,s} = 
      \begin{pmatrix}
      1\\
      \frac{1}{s-t} \int_t^s
      (\sigma(   X^x_{t,r}))^{-1} \; \Big(\frac{\partial}{\partial x} X^x_{t,r}\Big) \,\d W_r
      \end{pmatrix},     
      \end{equation} 
      let $f \in C([0,T] \times \R^d \times \R\times \R^{d}, \R)
      \cap L^2([0,T] \times \R^d \times \R\times \R^{d}, \R)$,
      $g \in C(\R^d, \R) \cap L^2(\R^d, \R)$
      be at most polynomially growing,
      and assume for all
      $t \in [0,T]$, $x_1,x_2 \in \R^d$, 
      $a_1,a_2\in\R$,
      $w_1,w_2 \in \R^{d}$  
      that
      $\lvert f(t,x_1,a_1,w_1) 
      - f(t,x_2,a_2,w_2) \rvert 
      \leq L\Vnorm{
          (a_1, w_1)
          -(a_2, w_2)}$.
      Then 
      \begin{enumerate}[label=(\roman*)]
          \item\label{it:vs1_cor2} 
          there exists a unique 
          $v\in C([0,T]\times \R^d,\R)
          \cap C^{0,1}([0,T)\times \R^d,\R)$
          which satisfies that
          $((v, \nabla_x v)(t,x)$
          $\cdot\sqrt{T-t})_{t\in [0,T), x\in\R^d}$
          grows at most polynomially 
          and for all 
          $t\in[0,T)$, $x\in \R^d$
          it holds that
          $\E[\abs{g(X^x_{t,T})} 
          \Vnorm{ Z^x_{t,T}}  
          + \int_t^T  \abs{f(r, X^x_{t,r}, 
              v(r, X^x_{t,r}),
              (\nabla_x v)(r,X^x_{t,r}))}
          \Vnorm{ Z^x_{t,r}}  \,\d r ]
          <\infty$
          and
          \begin{equation}\label{eq:v_cor2}
          \begin{split}
          &(v, \nabla_x v)(t,x) 
          =\E \left[  g(X^x_{t,T}) Z^x_{t,T}  
          + \int_t^T  f(r, X^x_{t,r}, v(r, X^x_{t,r}),
          (\nabla_x v)(r,X^x_{t,r})) 
          Z^x_{t,r}  \,\d r \right], 
          \end{split}
          \end{equation}
          \item\label{it:vs2_cor2}
          there exists a unique viscosity solution 
          $u\in\{\mathbf{u}\in C([0,T]\times \R^d,\R)
          \cap C^{0,1}([0,T)\times \R^d,\R)
          \colon$ $((\mathbf{u},\nabla_x\mathbf{u})(t,x)\sqrt{T-t})_{t\in [0,T), x\in\R^d}
          \text{ grows at most polynomially}\} $
          of
          \begin{multline}\label{eq:PDE.intro}
          (\tfrac{\partial u}{\partial t})(t,x)
          +\langle \mu(x), (\nabla_x u)(t,x)\rangle
          +\tfrac{1}{2}\operatorname{Tr}(\sigma(x)[\sigma(x)]^*(\operatorname{Hess}_x u)(t,x))
          \\
          +f(t,x,u(t,x), (\nabla_x u)(t,x))
          =0
          \end{multline}
          with $u(T,x)=g(x)$ for
          $(t,x)\in (0,T)\times \R^d$,
          and
          \item\label{it:vs3_cor2} 
          for all $t\in[0,T]$, $x\in \R^d$
          it holds that
          $u(t,x)=v(t,x)$.
      \end{enumerate}   
  \end{theorem}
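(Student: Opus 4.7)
The plan is to prove part \ref{it:vs1_cor2} by a Banach fixed-point argument on a weighted function space, and then to deduce parts \ref{it:vs2_cor2} and \ref{it:vs3_cor2} by smooth approximation and the stability of viscosity solutions.

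For \ref{it:vs1_cor2}, I would work on the Banach space $E$ of functions $v\in C([0,T]\times\R^d,\R)\cap C^{0,1}([0,T)\times\R^d,\R)$ whose norm
\[
\Vnorm{v}_{E,\lambda} := \sup_{t\in[0,T),\,x\in\R^d}\frac{e^{-\lambda(T-t)}\sqrt{T-t}\,\Vnorm{(v(t,x),(\nabla_x v)(t,x))}}{(1+\norm{x})^{q}}
\]
is finite, with $\lambda>0$ and the exponent $q$ chosen large enough to accommodate the polynomial growth of $f$, $g$ and the moments of $X^x$. On this space I would define the operator
\[
(\Phi v)(t,x) := \Exp{g(X^x_{t,T}) + \int_t^T f(r,X^x_{t,r},v(r,X^x_{t,r}),(\nabla_x v)(r,X^x_{t,r}))\,\d r}.
\]
The uniform ellipticity $v^*\sigma(x)\sigma(x)^*v\ge\alpha\norm{v}^2$, combined with the moment bounds on $X^x$ and on the variational process $\tfrac{\partial}{\partial x}X^x$ coming from the local Lipschitz continuity of $\tfrac{\partial\mu}{\partial x}$ and $\tfrac{\partial\sigma}{\partial x_j}$, makes the Bismut--Elworthy--Li formula applicable; differentiating under the expectation yields that $\Phi v$ is $C^{0,1}$ in $x$ and that $(\Phi v,\nabla_x\Phi v)(t,x)$ equals the right-hand side of \eqref{eq:v_cor2}. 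Hence the fixed points of $\Phi$ in $E$ are precisely the solutions of the SFPE sought in \ref{it:vs1_cor2}.

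The crux is the contraction estimate. The Bismut--Elworthy--Li moment bound
\[
\Exp{\Vnorm{Z^x_{t,s}}^2}\;\le\;\frac{C(1+\norm{x}^2)^p}{s-t}
\]
together with Cauchy--Schwarz, the Lipschitz assumption on $f$, and standard moment estimates for $X^x$ give
\[
\Vnorm{\Phi v_1-\Phi v_2}_{E,\lambda}\;\le\;CL\,\sup_{t\in[0,T)}\,\sqrt{T-t}\int_t^T\frac{e^{-\lambda(r-t)}\,\d r}{\sqrt{r-t}\sqrt{T-r}}\cdot\Vnorm{v_1-v_2}_{E,\lambda},
\]
and a Beta-type analysis (splitting the integral at $r=(t+T)/2$) shows that the supremum is $O(\lambda^{-1/2})$. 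Choosing $\lambda$ large thus makes $\Phi$ a strict contraction, and Banach's theorem yields the unique fixed point $v$ claimed in \ref{it:vs1_cor2}; the stated integrability follows from the same moment bounds. The main technical obstacle is precisely this $(r-t)^{-1/2}$ singularity of the Bismut--Elworthy--Li weight: the $\sqrt{T-t}$ factor in the norm together with the exponential damping are chosen exactly to absorb it, which is the reason for the unusual weighting scheme.

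For \ref{it:vs2_cor2} and \ref{it:vs3_cor2}, I would approximate the data by smooth, polynomially bounded $(f_n,g_n)\to(f,g)$ converging locally uniformly. Classical parabolic theory (exploiting the uniform ellipticity and $C^{1}$-regularity of $\mu,\sigma$) provides $C^{1,2}$-classical solutions $u_n$ of the corresponding smoothed PDEs, and Itô's formula together with the Bismut--Elworthy--Li identity show that $u_n$ is the fixed point of the operator $\Phi$ associated to $(f_n,g_n)$. The contraction argument of \ref{it:vs1_cor2}, which is stable under perturbation of the data, yields $u_n\to v$ in $E$ and hence locally uniformly; the standard stability of viscosity solutions under locally uniform convergence then identifies $v$ as a viscosity solution of \eqref{eq:PDE.intro}. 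Uniqueness within the prescribed polynomial-growth class follows from a comparison principle for semilinear parabolic PDEs with Lipschitz gradient-dependent nonlinearity (standard under the nondegeneracy of $\sigma\sigma^*$), which proves \ref{it:vs2_cor2} and, by construction, \ref{it:vs3_cor2}.
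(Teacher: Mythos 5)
Your plan for part \ref{it:vs1_cor2} is essentially the one the paper takes, though the paper defers the fixed-point machinery to a companion paper [HP2023, Theorem 3.5]: the weighted norm with the $\sqrt{T-t}$ factor to absorb the $(s-t)^{-1/2}$ singularity of the Bismut--Elworthy--Li weight, the differentiation under the expectation, and the large-$\lambda$ contraction estimate are exactly the right ingredients, and the gradient representation in \eqref{eq:v_cor2} is then obtained via the paper's Theorem~\ref{thm:BEL_formula2}. Up to organizational differences, part \ref{it:vs1_cor2} is on solid ground.

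For parts \ref{it:vs2_cor2} and \ref{it:vs3_cor2} you diverge from the paper, and two of your steps are asserted rather than established. First, ``classical parabolic theory provides $C^{1,2}$-classical solutions $u_n$'' is not immediate here: the nonlinearity is only Lipschitz in $(u,\nabla_x u)$, the domain is all of $\R^d$, and solutions are merely polynomially bounded. Even after smoothing $(f_n,g_n)$, producing global classical solutions with controlled gradient growth requires Schauder-type a priori estimates in weighted spaces and a continuation argument that you do not supply. The paper sidesteps this entirely by never leaving the viscosity framework: it treats the nonlinear term as an inhomogeneity $h(t,x)=f(t,x,v(t,x),(\nabla_x v)(t,x))$, and Proposition~\ref{prop:vs23} shows $v$ is a viscosity solution of the resulting linear inhomogeneous Kolmogorov PDE by approximating $g,h,\mu,\sigma$ with compactly supported data and invoking a stability theorem for viscosity solutions under locally uniform convergence.

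Second, and more seriously, the claim that ``uniqueness follows from a standard comparison principle for semilinear parabolic PDEs with Lipschitz gradient-dependent nonlinearity under nondegeneracy of $\sigma\sigma^*$'' is where the real work lies, and it is not standard in this class. You are comparing functions on the unbounded domain $\R^d$ whose growth is only controlled via the weight $\sqrt{T-t}$ on $\nabla_x u$, and the nonlinearity feeds back the gradient with Lipschitz constant $L$. The paper has to prove this comparison result from scratch: Lemma~\ref{lem:vs3_3} extends a doubling-of-variables maximum principle to allow for the degenerate weight $\sqrt{T-t}$, and Proposition~\ref{prop:vs3_5} transforms $u_i\mapsto u_i/(e^{-Lt}V)$ with a Lyapunov function $V$ satisfying \eqref{eq:V_vissol}, whose extra term $L\|(\nabla_x V)\|$ is specifically engineered to absorb the $L$-Lipschitz gradient dependence. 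Without such a Lyapunov construction, uniqueness in the polynomial-growth class with gradient-dependent $f$ does not follow from off-the-shelf results; this is precisely the technical contribution of the uniqueness part of the paper. Your proposal therefore has a genuine gap here that would need to be filled with an argument of comparable depth.
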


\section{Existence and uniqueness results for viscosity solutions (VS) of Kolmogorov PDEs}
\label{sec:vs}

\subsection{Definitions}
\label{subsec:vs_def}

In this section we
recall the definitions 
of elliptic functions,
viscosity solutions,
and parabolic superjets.
The following definitions are from 
\cite[Definitions 2.4-2.7]{beck2021nonlinear}
and \cite[Definitions 2.11 and 2.13]{beck2021nonlinear}.

\begin{definition}[Degenerate elliptic functions]
    \label{def:deg_ell}
    Let $d\in\N$, $T\in(0,\infty)$,
    let $O\subseteq\R^d$ be a
    non-empty open set,
    and let $\langle\cdot,\cdot\rangle\colon
    \R^d\times\R^d\to\R$ be the standard
    Euklidean scalar product on $\R^d$.
    Then $G$ is degenerate elliptic
    on $(0,T)\times O\times\R\times\R^d
    \times\mathbb{S}_d$
    if
    \begin{enumerate}[label=(\roman*)]
        \item it holds that 
        $G\colon (0,T)\times O\times \R
        \times\R^d\times\mathbb{S}_d\to\R$
        is a function from 
        $(0,T)\times O\times\R\times\R^d
        \times\mathbb{S}_d$ to $\R$
        and
        \item it holds for all $t\in (0,T)$,
        $x\in O$, $r\in\R$, $p\in\R^d$,
        $A,B\in \mathbb{S}_d$ with
        $\forall y\in\R^d\colon
        \langle A y,y\rangle
        \leq \langle B y, y \rangle$
        that
        $G(t,x,r,p,A)\leq G(t,x,r,p,B)$.
    \end{enumerate}
\end{definition}

\begin{definition}[Viscosity subsolution]
    \label{def:vs_sub}
    Let $d\in\N$, $T\in(0,\infty)$,
    let $O\subseteq \R^d$ be a non-empty
    open set, and let 
    $G\colon (0,T)\times O\times\R
    \times\R^d\times\mathbb{S}_d\to\R$
    be degenerate elliptic. 
    Then $u$ is a viscosity solution of
    $(\frac{\partial}{\partial t}u)(t,x)
    +G(t,x,u(t,x), (\nabla_x u)(t,x),
    (\operatorname{Hess}_x u)(t,x))$
    $\geq 0$
    for $(t,x)\in(0,T)\times O$
    (we say that $u$ is a viscosity subsolution
    of $(\frac{\partial}{\partial t}u)(t,x)
    +G(t,x,u(t,x), (\nabla_x u)(t,x),
    (\operatorname{Hess}_x u)(t,x))
    =0 $)
    if and only if there exsits
    a set $A\subseteq \R\times\R^d$ 
    such that
    \begin{enumerate}[label=(\roman*)]
        \item it holds that 
        $(0,T)\times O\subseteq A$,
        \item it holds that 
        $u\colon A\to\R$ is upper
        semi-continuous, 
        and
        \item for all $t\in(0,T)$, $x\in O$,
        $\phi\in C^{1,2}((0,T)\times O,\R)$
        with $\phi(t,x)=u(t,x)$ and
        $\phi \geq u$ it holds that
        \begin{equation}
        (\tfrac{\partial}{\partial t}\phi)(t,x)
        +G(t,x,\phi(t,x), (\nabla_x \phi)(t,x),
        (\operatorname{Hess}_x \phi)(t,x))
        \geq 0.
        \end{equation}
    \end{enumerate}
\end{definition}

\begin{definition}[Viscosity supersolution]
    \label{def:vs_sub2}
    Let $d\in\N$, $T\in(0,\infty)$,
    let $O\subseteq \R^d$ be a non-empty
    open set, and let 
    $G\colon (0,T)\times O\times\R
    \times\R^d\times\mathbb{S}_d\to\R$
    be degenerate elliptic. 
    Then $u$ is a viscosity solution of
    $(\frac{\partial}{\partial t}u)(t,x)
    +G(t,x,u(t,x), (\nabla_x u)(t,x),
    (\operatorname{Hess}_x u)(t,x))$
    $\leq 0$
    for $(t,x)\in(0,T)\times O$
    (we say that $u$ is a viscosity supersolution
    of $(\frac{\partial}{\partial t}u)(t,x)
    +G(t,x,u(t,x), (\nabla_x u)(t,x),
    (\operatorname{Hess}_x u)(t,x))
    =0 $)
    if and only if there exsits
    a set $A\subseteq \R\times\R^d$ 
    such that
    \begin{enumerate}[label=(\roman*)]
        \item it holds that 
        $(0,T)\times O\subseteq A$,
        \item it holds that 
        $u\colon A\to\R$ is lower
        semi-continuous, 
        and
        \item for all $t\in(0,T)$, $x\in O$,
        $\phi\in C^{1,2}((0,T)\times O,\R)$
        with $\phi(t,x)=u(t,x)$ and
        $\phi \leq u$ it holds that
        \begin{equation}
        (\tfrac{\partial}{\partial t}\phi)(t,x)
        +G(t,x,\phi(t,x), (\nabla_x \phi)(t,x),
        (\operatorname{Hess}_x \phi)(t,x))
        \leq 0.
        \end{equation}
    \end{enumerate}
\end{definition}

\begin{definition}[Viscosity solution]
    \label{def:vs}
    Let $d\in\N$, $T\in(0,\infty)$,
    let $O\subseteq \R^d$ be a non-empty
    open set, and let 
    $G\colon (0,T)\times O\times\R
    \times\R^d\times\mathbb{S}_d\to\R$
    be degenerate elliptic. 
    Then $u$ is a viscosity solution of
    $(\frac{\partial}{\partial t}u)(t,x)
    +G(t,x,u(t,x), (\nabla_x u)(t,x),
    (\operatorname{Hess}_x u)(t,x))
    = 0$
    for $(t,x)\in(0,T)\times O$
    if and only if
    \begin{enumerate}[label = (\roman*)]
        \item it holds that $u$ 
        is a viscosity subsolution of
        \begin{equation}
        (\tfrac{\partial}{\partial t}u)(t,x)
        +G(t,x,u(t,x), (\nabla_x u)(t,x),
        (\operatorname{Hess}_x u)(t,x))
        = 0
        \end{equation}
        for $(t,x)\in(0,T)\times O$
        and
        \item it holds that $u$ 
        is a viscosity supersolution of
        \begin{equation}
        (\tfrac{\partial}{\partial t}u)(t,x)
        +G(t,x,u(t,x), (\nabla_x u)(t,x),
        (\operatorname{Hess}_x u)(t,x))
        = 0
        \end{equation}
        for $(t,x)\in(0,T)\times O$.
    \end{enumerate}
\end{definition}


\begin{definition}[Parabolic superjets]
    \label{def:par_superjets}
    Let $d\in \N$, $T\in (0,\infty)$,
    let $O\subseteq\R^d$ be a non-empty
    open set,
    let $t\in (0,T)$, $x\in O$,
    let $\langle\cdot, \cdot\rangle\colon
    \R^d\times\R^d\to\R$ be the 
    standard Euclidean scalar product
    on $\R^d$,
    let $\norm{\cdot}\colon\R^d\to[0,\infty)$ 
    be the standard Euclidean norm
    on $\R^d$,
    and let $u\colon(0,T)\times O\to\R$
    be a function. Then
    \begin{enumerate}[label=(\roman*)]
        \item we denote by $(\mathcal{P}^+ u)(t,x)$
        the set satisfying
        \begin{equation}
        \begin{split}
        &(\mathcal{P}^+ u)(t,x)
        =\Big\{(b,p,A)\in\R\times\R^d\times\mathbb{S}_d\colon\\
        &\limsup_{[(0,T)\times O]\setminus
            \{(t,x)\}\ni (s,y)\mapsto (t,x)}
        \Big[\tfrac{u(s,y)-u(t,x)-b(s-t)
            -\langle p,y-x\rangle
            -\frac{1}{2}\langle A(y-x),y-x  \rangle}
        {\abs{t-s}+\norm{x-y}^2} \Big]
        \leq 0
        \Big\},
        \end{split}
        \end{equation}
        and
        \item we denote by $(\mathfrak{P}^+ u)(t,x)$
        the set satisfying
        \begin{equation}
        \begin{split}
        &(\mathfrak{P}^+ u)(t,x)
        =\Big\{(b,p,A)\in\R\times\R^d\times\mathbb{S}_d\colon\\
        &\quad\Big(\exists (t_n,x_n,b_n,p_n,A_n)_{n\in\N}
        \subseteq (0,T)\times O\times\R
        \times\R^d\times\mathbb{S}_d\colon\\
        &\quad\big(\forall n\in\N\colon (b_n,p_n,A_n)
        \in (\mathcal{P}^+u)(t_n,x_n)\big)
        \text{ and }\\
        &\quad\lim\nolimits_{n\to\infty}
        (t_n,x_n,u(t_n,x_n),b_n,p_n,A_n)
        =(t,x,u(t,x),b,p,A)\Big)\Big\}.
        \end{split}
        \end{equation} 
    \end{enumerate}
\end{definition}

\subsection{Existence result for viscosity solutions of linear inhomogeneous Kolmogorov PDEs}
\label{subsec:vs_existence}

The following proposition
is a variation of \cite[Proposition 2.23]{beck2021nonlinear} where we replace $[0,T]$
by $[0,T]\setminus K_r$.

\begin{prop}\label{prop:vs23}
    Let $d,m\in\N$, $T\in (0,\infty)$,
    let $O\subseteq \R^d$ be a non-empty
    open set,
    let $\langle\cdot,\cdot\rangle\colon
    \R^d\times\R^d\to\R$ be the
    standard Euclidean scalar product
    on $\R^d$,
    let $\norm{\cdot }\colon\R^d\to[0,\infty)$
    be the standard Euclidean norm on $\R^d$,
    let $\norm{\cdot}_F\colon \R^{d\times m}
    \to [0,\infty)$ be the Frobenius norm
    on $\R^{d\times m}$,
    for every $r\in(0,\infty)$ let
    $K_r\subseteq[0,T)$, $O_r\subseteq O$
    satisfy $K_r=[0,\max\{T-\frac{1}{r},0\}]$
    and
    $O_r = \{  x \in O\colon \norm{x} \leq r \text{ and } \{ y \in \R^d\colon \norm{y-x} 
    < \frac{1}{r} \} \subseteq O \}$,
    let $g\in C(O,\R)$, 
    $h\in C([0,T]\times O,\R)$,
    $\mu\in C([0,T]\times O,\R^d)$,
    $\sigma\in C([0,T]\times O, \R^{d\times m})$,
    $V\in C^{1,2}([0,T]\times O, (0,\infty))$
    satisfy 
    for all $r\in(0,\infty)$ that
    \begin{equation}
    \sup\bigg(\bigg\{ \tfrac{\norm{\mu(t,x)-\mu(t,y)}+\norm{\sigma(t,x)-\sigma(t,y)}_F}{\norm{x-y}}\colon t\in [0,T], x,y\in O_r, x\neq y \bigg\}\cup \{0\} \bigg)
    <\infty,
    \end{equation}
    assume for all $t\in[0,T]$, $x\in O$ that
    \begin{equation}
    \label{eq:V_ass23}
    (\tfrac{\partial V}{\partial t})(t,x)
    +\langle \mu(t,x), (\nabla_x V)(t,x)  \rangle
    +\tfrac{1}{2} \operatorname{Tr}(\sigma(t,x)[\sigma(t,x)]^* (\operatorname{Hess}_x V)(t,x))    
    \leq 0,
    \end{equation}
    assume that $\sup_{r\in (0,\infty)}[\inf_{t\in[0,T)\setminus K_r}\inf_{x\in O\setminus O_r} V(t,x)]=\infty$ and
    $\inf_{r\in (0,\infty)}[\sup_{t\in [0,T)\setminus K_r}$
    $\sup_{x\in O\setminus O_r}$
    $(\frac{\abs{g(x)}}{V(T,x)}$ $+\frac{\abs{h(t,x)}}{V(t,x)}\sqrt{T-t})]
    =0$,
    let $(\Omega, \mathcal{F}, \mathbb{P},
    (\mathbb{F}_t)_{t\in[0,T]})$
    be a filtered probability space,
    let $W\colon [0,T]\times \Omega\to \R^m$
    be a standard $(\mathbb{F}_t)_{t\in [0,T]}$-Brownian motion,
    for every $t\in[0,T]$, $x\in O$ let
    $X^x_t=(X^x_{t,s})_{s\in[t,T]}\colon [t,T]\times \Omega \to O$ be an 
    $(\mathbb{F}_s)_{s\in[t,T]}$-adapted
    stochastic process with continuous
    sample paths satisfying that for all
    $s\in[t,T]$ it holds 
    a.s.\!
    that
    \begin{equation}
    X^x_{t,s}= x+\int_t^s \mu(r,X^x_{t,r}) 
    \,\d r 
    +\int_t^s \sigma(r,X^x_{t,r})\, \d W_r,
    \end{equation} 
    and let $u\colon[0,T]\times\R^d\to\R$
    satisfy for all $t\in [0,T]$, $x\in\R^d$
    that
    \begin{equation}
    \label{eq:u}
    u(t,x) = \E\Big[g(X^x_{t,T})
    +\int_t^T h(s,X^x_{t,s})\,\d s\Big].
    \end{equation}
    Then it holds that $u$ is a 
    viscosity solution of
    \begin{equation}
    \label{eq:u_vs1}
    \begin{split}
    &(\tfrac{\partial u}{\partial t})(t,x)
    +\langle \mu(t,x), (\nabla_x u)(t,x) \rangle
    \\
    &\qquad  
    +\tfrac{1}{2} \operatorname{Tr}(\sigma(t,x)[\sigma(t,x)]^* (\operatorname{Hess}_x u)(t,x))  
    +h(t,x)
    = 0
    \end{split}
    \end{equation}
    with $u(T,x)=g(x)$
    for $(t,x)\in(0,T)\times O$.
\end{prop}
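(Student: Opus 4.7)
The plan is to adapt the proof of Proposition 2.23 in \cite{beck2021nonlinear} by exploiting the Lyapunov function $V$ for localization, with particular attention to the weaker decay assumption in which the supremum is taken only over $[0,T)\setminus K_r$ rather than over all of $[0,T]$. The latter is what allows $h$ to blow up like $(T-t)^{-1/2}$ near the terminal time, which is the regime relevant for later applications to gradient-dependent nonlinearities.

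First, I would show that the expectation in \eqref{eq:u} is well-defined for every $(t,x)\in [0,T)\times O$. Applying It\^o's formula to $V(s,X^x_{t,s})$ (stopped at exit times from $O_r$) together with \eqref{eq:V_ass23} makes $V(s,X^x_{t,s})$ a nonnegative local supermartingale, and Fatou yields $\E[V(s,X^x_{t,s})]\leq V(t,x)$. Splitting the integrand in \eqref{eq:u} with the indicator $\indicator{X^x_{t,s}\in O_r}$ and using the decay hypothesis on the complement together with continuity of $h$ on the compact set $K_r\times \overline{O_r}$ yields integrability of $g(X^x_{t,T})$ and of $s\mapsto h(s,X^x_{t,s})$ on $[t,T]$; the extra factor $\sqrt{T-s}$ is absorbed by $\int_t^T (T-s)^{-1/2}\,\d s<\infty$.

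Second, I would prove continuity of $u$ on $[0,T]\times O$. Joint continuity for $t\in[0,T)$ follows from continuous dependence of $X^x_{t,s}$ on initial data combined with dominated convergence, the dominating functions being constructed from $V$ and the data via the decay hypothesis. The terminal condition $u(T,x)=g(x)$ and continuity as $s\to T^-$ follow from $X^x_{T,T}=x$ and standard martingale estimates. Third, I would verify that $u$ is a viscosity solution of \eqref{eq:u_vs1}. The central ingredient is the flow identity
\begin{equation}
u(t,x)=\E\Big[u(\tau,X^x_{t,\tau})+\int_t^{\tau}h(s,X^x_{t,s})\,\d s\Big]
\end{equation}
for any bounded stopping time $\tau\in[t,T]$, which follows from the Markov property of $X$ and the tower property. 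For a test function $\phi\in C^{1,2}$ touching $u$ from above at $(t,x)\in(0,T)\times O$, applying It\^o to $\phi(s,X^x_{t,s})$ up to a short exit time $\tau_\epsilon$ from a small space-time cylinder, dividing by $\epsilon$, and letting $\epsilon\to 0$ yields the subsolution inequality at $(t,x)$; the supersolution direction is symmetric.

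The main obstacle will be the dominated-convergence step used both for continuity and for the viscosity-solution property, because $h$ may blow up as $s\to T$. Here the localized decay hypothesis is essential: for each $\varepsilon>0$, choosing $r$ large renders $|h(s,y)|\sqrt{T-s}/V(s,y)\leq \varepsilon$ uniformly on $([0,T)\setminus K_r)\times (O\setminus O_r)$, whereas on the complementary compact region $h$ is bounded by continuity. Combining this with the supermartingale bound on $V$ and a stopping-time argument gives the required uniform integrability, reducing the remainder to a direct adaptation of \cite[Proposition 2.23]{beck2021nonlinear}.
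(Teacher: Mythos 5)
Your proposal takes a genuinely different route from the paper. The paper's proof is a two-stage approximation and stability argument: it first replaces $\mu,\sigma$ by globally Lipschitz, compactly supported approximations $\mathfrak{m}_n,\mathfrak{s}_n$ that agree with $\mu,\sigma$ on $\{V\le n\}$, obtains viscosity solutions $\mathfrak{u}^{n,k}$ from the ``nice'' case (Lemma 2.22 in \cite{beck2021nonlinear}), then approximates $g,h$ by compactly supported $\mathfrak{g}_k,\mathfrak{h}_k$ with the $\sqrt{T-t}$ weight in \eqref{eq:g_h_app_prop}, and passes to the limit twice (first $n\to\infty$, then $k\to\infty$) using a stability theorem for viscosity solutions under locally uniform convergence (Corollary 2.20 in \cite{beck2021nonlinear}), with Markov's inequality and $\E[V(\tau,X^x_{t,\tau})]\le V(t,x)$ controlling the error. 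Your proposal is the classical direct approach: establish a flow identity by the Markov property, apply It\^o to the test function $\phi$ up to a short exit time $\tau_\epsilon$, divide by $\epsilon$, and send $\epsilon\to 0$. When it applies, the direct route is shorter and gives a cleaner conceptual picture; the paper's route avoids certain hypotheses needed for the direct one.

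There is, however, a genuine gap in your argument, and it is precisely the hypothesis that the paper's structure is designed to avoid. Your flow identity
\begin{equation*}
u(t,x)=\E\Big[u(\tau,X^x_{t,\tau})+\int_t^{\tau}h(s,X^x_{t,s})\,\d s\Big]
\end{equation*}
``follows from the Markov property of $X$ and the tower property'' only if the family $(X^x_{t,\cdot})_{t,x}$ is a Markov/flow family. But Proposition~\ref{prop:vs23} merely postulates, for each fixed $(t,x)$, \emph{some} adapted continuous solution of the SDE. The coefficients are only locally Lipschitz on $O$ (and $O$ is an arbitrary open set), so pathwise uniqueness, the strong Markov property, and the flow/semigroup identity are not available from the hypotheses as stated. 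Without them the identity you need to start the It\^o argument is unjustified, and the subsequent localization with $\tau_\epsilon$ does not help because the issue is about consistency of the family, not about integrability. This is exactly why the paper first truncates to globally Lipschitz, compactly supported coefficients (where uniqueness, Markov property, and the Feynman--Kac/flow identity hold), proves the viscosity property there, and then transfers it to the original $u$ via a locally uniform convergence and stability argument, rather than arguing directly with $X$. If you wish to keep the direct approach you would need to add a Markov/flow assumption to the statement, or first establish one by an approximation argument that essentially replicates the paper's first stage, at which point the stability route is no more expensive. The remaining parts of your proposal (the supermartingale bound $\E[V(s,X^x_{t,s})]\le V(t,x)$, the uniform-integrability handling of the $\sqrt{T-t}$ weight near the terminal time, and the terminal condition) are sound and close to what the paper does.
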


\begin{proof}[Proof of Proposition~\ref{prop:vs23}]
    Throughout this proof let
    $\mathfrak{g}_n\in C(\R^d,\R)$, $n\in\N$,
    and $\mathfrak{h}_n\in C([0,T]\times \R^d,\R)$,
    $n\in\N$, be compactly supported
    functions which satisfy that
    $[\bigcup_{n\in\N}\operatorname{supp}(\mathfrak{g}_n)] \subseteq O$,
    $[\bigcup_{n\in\N}\operatorname{supp}(\mathfrak{h}_n)]\subseteq [0,T]\times O$,
    and
    \begin{equation}
    \label{eq:g_h_app_prop}
    \limsup_{n\to\infty} \bigg[
    \sup_{t\in[0,T)}\sup_{x\in O}\bigg(\frac{\abs{\mathfrak{g}_n(x)-g(x)}}{V(T,x)}+\frac{\abs{\mathfrak{h}_n(t,x)-h(t,x)}}{V(t,x)}\sqrt{T-t}\bigg)\bigg]
    =0
    \end{equation}
    (cf. \cite[Corollary 2.6]{HP2023})
    let $\mathfrak{m}_n\in C([0,T]\times\R^d,\R^d)$,
    $n\in\N$, and 
    $\mathfrak{s}_n\in C([0,T]\times\R^d,\R^{d\times m})$, 
    $n\in\N$,
    satisfy that
    \begin{enumerate}[label=(\Roman*)]
        \item
        \label{it:mu_sigma_app_1} 
        for all $n\in\N$ 
        it holds that
        \begin{equation}
        \sup_{t\in[0,T]}\sup_{\substack{x,y\in\R^d\\x\neq y}}\bigg[\frac{\norm{\mathfrak{m}_n(t,x)-\mathfrak{m}_n(t,y)}
            +\norm{\mathfrak{s}_n(t,x)-\mathfrak{s}_n(t,y)}_F}{\norm{x-y}}\bigg]
        <\infty,
        \end{equation}
        \item
        \label{it:mu_sigma_app_2}  
        for all $n\in\N$, 
        $t\in [0,T]$, $x\in O$
        it holds that
        \begin{equation}
        \indicator{\{V\leq n\}}(t,x)
        [\norm{\mathfrak{m}_n(t,x)-\mu(t,x)}
        +\norm{\mathfrak{s}_n(t,x)-\sigma(t,x)}_F]
        =0,
        \end{equation}
        and
        \item 
        \label{it:mu_sigma_app_3}
        for all $n\in\N$, $t\in [0,T]$,
        $x\in\R^d\setminus \{V\leq n+1\}$
        it holds that
        $\norm{\mathfrak{m}_n(t,x)}+\norm{\mathfrak{s}_n(t,x)}_F
        =0$
    \end{enumerate}
    (cf., e.g., the proof of 
    \cite[Lemma 3.7]{beck2021existence}),
    for every $n\in\N$, $t\in [0,T]$,
    $x\in\R^d$ let
    $\mathfrak{X}^{x,n}_t=(\mathfrak{X}^{x,n}_{t,s})_{s\in[t,T]}
    \colon [t,T]\times \Omega \to\R^d$
    be an $(\mathbb{F}_s)_{s\in [t,T]}$-adapted stochastic process with 
    continuous sample paths satisfying 
    that for all $s\in[t,T]$
    it holds 
    a.s.\! that
    \begin{equation}
    \label{eq:frak_X_n}
    \mathfrak{X}^{x,n}_{t,s} = x
    +\int_t^s \mathfrak{m}_n(r,\mathfrak{X}^{x,n}_{t,r}) \,\d r
    +\int_t^s \mathfrak{s}_n(r,\mathfrak{X}^{x,n}_{t,r})\,\d W_r
    \end{equation}
    (cf., e.g.,
    \cite[Theorem 5.2.9]{KaratzasShreve1991}),
    let $\mathfrak{u}^{n,k}\colon[0,T]\times\R^d\to\R$, 
    $n\in\N_0$, $k\in\N$,
    satisfy for all $n,k\in\N$,
    $t\in [0,T]$, $x\in\R^d$ that
    \begin{equation}
    \mathfrak{u}^{n,k}(t,x) = \E\bigg[\mathfrak{g}_k(\mathfrak{X}^{x,n}_{t,T})
    +\int_t^T \mathfrak{h}_k(s,\mathfrak{X}^{x,n}_{t,s})\,\d s\bigg]
    \end{equation}
    and
    \begin{equation}
    \mathfrak{u}^{0,k}(t,x) 
    = \E\bigg[\mathfrak{g}_k(X^{x}_{t,T})
    +\int_t^T \mathfrak{h}_k(s,X^{x}_{t,s})\,\d s\bigg],
    \end{equation}
    and for every $n\in\N$, $t\in [0,T]$,
    $x\in O$ let 
    $\tau^{x,n}_t\colon\Omega\to [t,T]$
    satisfy 
    \begin{equation}
    \tau^{x,n}_t
    =\inf(\{s\in[t,T]\colon \max\{V(s,\mathfrak{X}^{x,n}_{t,s}), V(s,X^x_{t,s})\}\geq n\}\cup\{T\}).
    \end{equation}
    Next note that \cite[Lemma 2.22]{beck2021nonlinear}
    (applied for every $n,k\in\N$ with 
    $\mu\curvearrowleft \mathfrak{m}_n$,
    $\sigma\curvearrowleft \mathfrak{s}_n$,
    $g\curvearrowleft \mathfrak{g}_k$,
    $h\curvearrowleft \mathfrak{h}_k$
    in the notation of 
    \cite[Lemma 2.22]{beck2021nonlinear}),
    item~\ref{it:mu_sigma_app_1},
    and the fact that for all $n\in\N$
    it holds that
    $\mathfrak{m}_n$ and $\mathfrak{s}_n$
    have compact support
    demonstrate that 
    for all $n,k\in\N$ it holds that
    $\mathfrak{u}^{n,k}$ is a viscosity solution of
    \begin{equation}
    \label{eq:u_n_vs1}
    \begin{split}
    &(\tfrac{\partial }{\partial t}\mathfrak{u}^{n,k})(t,x)
    +\langle \mathfrak{m}_n(t,x), 
    (\nabla_x \mathfrak{u}^{n,k})(t,x)\rangle
    \\
    &\qquad
    +\tfrac{1}{2}\operatorname{Tr}(\mathfrak{s}_n(t,x)[\mathfrak{s}_n(t,x)]^*(\operatorname{Hess}_x \mathfrak{u}^{n,k})(t,x))
    +\mathfrak{h}_k(t,x)=0
    \end{split}
    \end{equation}
    for $(t,x)\in(0,T)\times \R^d$.
    Furthermore, note that items 
    \ref{it:mu_sigma_app_1}-\ref{it:mu_sigma_app_3}
    and \eqref{eq:frak_X_n}
    ensure that for all $n\in\N$,
    $t\in [0,T]$, $x\in O$
    it holds that
    \begin{equation}
    \mathbb{P}(\forall s\in [t,T]\colon
    \indicator{\{s\leq \tau_t^{x,n}\}}
    \mathfrak{X}^{x,n}_{t,s}
    =\indicator{\{s\leq \tau_t^{x,n}\}}
    X^x_{t,s})=1.
    \end{equation}
    Hence, we obtain for all $n,k\in\N$,
    $t\in[0,T]$, $x\in O$ that
    \begin{equation}\label{eq:g_k_X^n_conv}
    \begin{split}
    &\E\Big[\abs{\mathfrak{g}_k(\mathfrak{X}^{x,n}_{t,T})
        -\mathfrak{g}_k(X^x_{t,T})}\Big]
    =\E\Big[\indicator{\{\tau^{x,n}_t<T\}}\abs{\mathfrak{g}_k(\mathfrak{X}^{x,n}_{t,T})
        -\mathfrak{g}_k(X^x_{t,T})}\Big]\\
    &\leq 2\Big[\sup\nolimits_{y\in O}
    \abs{\mathfrak{g}_k(y)}\Big] \mathbb{P}(\tau^{x,n}_t<T)
    \end{split}
    \end{equation}     
    and
    \begin{equation}
    \begin{split}\label{eq:h_k_X^n_conv}
    &\int_t^T \E\Big[\abs{\mathfrak{h}_k(s,\mathfrak{X}^{x,n}_{t,s})
        -\mathfrak{h}_k(s,X^x_{t,s})}\Big]\,\d s\\
    &= \int_t^T \E\Big[\indicator{\{\tau^{x,n}_t<T\}}
    \abs{\mathfrak{h}_k(s,\mathfrak{X}^{x,n}_{t,s})
        -\mathfrak{h}_k(s,X^x_{t,s})}\Big]\,\d s\\
    &\leq 2T \Big[\sup\nolimits_{s\in [0,T]}
    \sup\nolimits_{y\in O} \abs{\mathfrak{h}_k(s,y)}\Big]
    \mathbb{P}(\tau^{x,n}_t<T).
    \end{split}
    \end{equation}
    In addition, observe that
    \cite[Lemma 3.1]{beck2021existence} and 
    \eqref{eq:V_ass23}
    ensure that for all $n\in\N$,
    $t\in [0,T]$, $x\in O$
    it holds that
    \begin{equation}
    \E[V(\tau^{x,n}_t, X^{x}_{t,\tau^{x,n}_t})]
    \leq V(t,x).
    \end{equation}    
    Markov's  inequality,
    \eqref{eq:g_k_X^n_conv},
    and \eqref{eq:h_k_X^n_conv}
    therefore imply that for 
    all $n,k\in\N$, $t\in [0,T]$,
    $x\in O$ it holds that
    \begin{equation}
    \begin{split}
    &\abs{\mathfrak{u}^{n,k}(t,x)-\mathfrak{u}^{0,k}(t,x)}
    \leq 2\bigg[\sup_{y\in O} \abs{\mathfrak{g}_k(y)}
    +T \sup_{s\in [0,T]}\sup_{y\in O}
    \abs{\mathfrak{h}_k(s,y)} \bigg] 
    \mathbb{P}(\tau^{x,n}_t<T)\\
    &\leq 2\bigg[\sup_{y\in O} \abs{\mathfrak{g}_k(y)}
    +T \sup_{s\in [0,T]}\sup_{y\in O}
    \abs{\mathfrak{h}_k(s,y)} \bigg] 
    \mathbb{P}(V(\tau^{x,n}_t,X^x_{t,\tau^{x,n}_t})\geq n)\\
    &\leq \frac{2}{n}\bigg[\sup_{y\in O} \abs{\mathfrak{g}_k(y)}
    +T \sup_{s\in [0,T]}\sup_{y\in O}
    \abs{\mathfrak{h}_k(s,y)} \bigg] 
    \E[V(\tau^{x,n}_t,X^x_{t,\tau^{x,n}_t})]\\
    &\leq \frac{2}{n}\bigg[\sup_{y\in O} \abs{\mathfrak{g}_k(y)}
    +T \sup_{s\in [0,T]}\sup_{y\in O}
    \abs{\mathfrak{h}_k(s,y)} \bigg] 
    V(t,x).
    \end{split}
    \end{equation}
    This 
    shows that for all $k\in\N$
    and all compact
    $\mathcal{K}\subseteq (0,T)\times O$
    it holds that
    \begin{equation}
    \label{eq:u_n_conv1}
    \limsup_{n\to\infty} \bigg[
    \sup_{(t,x)\,\in\mathcal{K}}
    \abs{\mathfrak{u}^{n,k}(t,x)
        -\mathfrak{u}^{0,k}(t,x)} \bigg]
    =0. 
    \end{equation}    
    Moreover, observe that 
    item~\ref{it:mu_sigma_app_2}
    and the assumption
    that $\sup_{r\in(0,\infty)}[\inf_{t\in[0,T)\setminus K_r}$ 
    $\inf_{x\in\R^d\setminus O_r}$
    $ V(t,x)]=\infty$ 
    imply that for all compact
    $\mathcal{K}\subseteq [0,T]\times O$
    it holds that
    \begin{equation}
    \limsup_{n\to\infty} \bigg[ \sup_{(t,x)\in\mathcal{K}} \Big(\norm{\mathfrak{m}_n(t,x)-\mu(t,x)}
    +\norm{\mathfrak{s}_n(t,x)-\sigma(t,x)}\Big) \bigg]
    =0.
    \end{equation}
    Combining
    \cite[Corollary 2.20]{beck2021nonlinear},
    \eqref{eq:u_n_vs1}, and
    \eqref{eq:u_n_conv1}
    hence
    demonstrates that
    for all $k\in\N$ it holds that
    $\mathfrak{u}^{0,k}$ is a viscosity solution of
    \begin{equation}
    \label{eq:u_k_vs}
    \begin{split}
    &(\tfrac{\partial }{\partial t}\mathfrak{u}^{0,k})(t,x)
    +\langle  \mu(t,x),
    (\nabla_x \mathfrak{u}^{0,k})(t,x) \rangle
    \\
    &\qquad
    +\tfrac{1}{2}\operatorname{Tr}(\sigma(t,x)[\sigma(t,x)]^*(\operatorname{Hess}_x \mathfrak{u}^{0,k})(t,x))
    +\mathfrak{h}_k(t,x)=0
    \end{split}
    \end{equation}
    for $(t,x)\in (0,T)\times O$.
    Next observe that
    the fact that for all 
    $t\in [0,T]$, $s\in[t,T]$, 
    $x\in O$
    it holds that
    $\E[V(s, X^{x}_{t,s})]\leq V(t,x)$
    demonstrates that 
    for all $k\in\N$,
    $t\in (0,T)$, $x\in O$
    it holds that
    \begin{equation}
    \begin{split}
    &\abs{\mathfrak{u}^{0,k}(t,x)-u(t,x)}\\
    &= \Big\lvert\E[\mathfrak{g}_k(X^x_{t,T})-g(X^x_{t,T})]+\int_t^T\E[ \mathfrak{h}_k(s, X^x_{t,s})-h(s,X^x_{t,s})]\,\d s\Big\rvert\\
    &\leq \E\bigg[\frac{\abs{\mathfrak{g}_k(X^x_{t,T})-g(X^x_{t,T})}V(T,X^x_{t,T})}{V(T,X^x_{t,T})}\bigg]\\
    &\qquad +\int_t^T\E\bigg[ \frac{\abs{\mathfrak{h}_k(s, X^x_{t,s})-h(s,X^x_{t,s})}V(s,X^x_{t,s})\sqrt{T-s}}{V(s,X^x_{t,s})\sqrt{T-s}}\bigg]\,\d s\\
    &\leq \bigg[\sup_{y\in O}
    \frac{\abs{\mathfrak{g}_k(y)-g(y)}}{V(T,y)}\bigg]\E\big[V(T,X^x_{t,T})
    \big]\\
    &\qquad 
    +\bigg[\sup_{r\in [0,T)}
    \sup_{y\in O}\frac{\abs{\mathfrak{h}_k(r,y)-h(r,y)}}{V(r,y)}\sqrt{T-r}\bigg]
    \int_t^T\E\bigg[
    \frac{V(s,X^x_{t,s})}{\sqrt{T-s}}\bigg]\,\d s\\
    &\leq \bigg[\sup_{y\in O}
    \frac{\abs{\mathfrak{g}_k(y)-g(y)}}{V(T,y)}\bigg]V(T,x)\\
    &\qquad
    +\bigg[\sup_{r\in [0,T)}
    \sup_{y\in O}\frac{\abs{\mathfrak{h}_k(r,y)-h(r,y)}}{V(r,y)}\sqrt{T-r}\bigg]
    \int_t^T
    \frac{V(t,x)}{\sqrt{T-s}}\,\d s\\
    &\leq \bigg[\sup_{y\in O}
    \frac{\abs{\mathfrak{g}_k(y)-g(y)}}{V(T,y)}\bigg]V(T,x)\\
    &\qquad
    +\bigg[\sup_{r\in [0,T)}
    \sup_{y\in O}\frac{\abs{\mathfrak{h}_k(r,y)-h(r,y)}}{V(r,y)}\sqrt{T-r}\bigg]
    2\sqrt{T}\, V(t,x).
    \end{split}
    \end{equation}    
    Combining this with
    \eqref{eq:g_h_app_prop}  
    shows that for all compact
    $\mathcal{K}\subseteq (0,T)\times O$
    it holds that
    \begin{equation}
    \begin{split}
    \limsup_{k\to\infty} \bigg[
    \sup_{(t,x)\,\in\mathcal{K}}
    \abs{\mathfrak{u}^{0,k}(t,x)-u(t,x)} \bigg]
    =0.
    \end{split}
    \end{equation}
    This,
    \cite[Corollary 2.20]{beck2021nonlinear},
    \eqref{eq:g_h_app_prop},
    and
    \eqref{eq:u_k_vs}
    imply that $u$
    is a viscosity solution of
    \begin{equation}
    \label{eq:u_vs2}
    \begin{split}
    &(\tfrac{\partial u}{\partial t})(t,x)
    +\langle \mu(t,x), (\nabla_x u)(t,x) \rangle\\
    &\qquad 
    +\tfrac{1}{2} \operatorname{Tr}(\sigma(t,x)[\sigma(t,x)]^* (\operatorname{Hess}_x u)(t,x))
    +h(t,x)
    = 0
    \end{split}
    \end{equation}
    for $(t,x)\in (0,T)\times O$.
    In addition, observe that
    \eqref{eq:u} ensures that
    for all $x\in\R^d$
    it holds that $u(T,x)=g(x)$.
    This and 
    \eqref{eq:u_vs2}
    establish \eqref{eq:u_vs1}.
    The proof of Proposition~\ref{prop:vs23}
    is thus complete.
\end{proof}

\subsection{Uniqueness results for viscosity solutions of semilinear Kolmogorov PDEs}
\label{subsec:vs_uniqueness}

The following lemma
is an extension of \cite[Lemma 3.3]{beck2021nonlinear}
where we consider $(0,T)\setminus K_r$ instead of $(0,T)$ in \eqref{eq:u_i_ass}.

\begin{lemma}\label{lem:vs3_3}
    Let $d,k\in\N$, $T\in(0,\infty)$,
    let $\langle\cdot,\cdot\rangle\colon
    \R^d\times\R^d\to\R$ be the standard
    Euclidean scalar product on $\R^d$,
    let $\norm{\cdot}\colon(\bigcup_{m\in\N}\R^m)\to[0,\infty)$
    satisfy for all $m\in\N$, 
    $x=(x_1,x_2,\ldots, x_m)\in\R^m$
    that $\norm{x}=(\sum_{i=1}^m \abs{x_i}^2)^{1/2}$,
    let $O\subseteq \R^d$ be a non-empty 
    open set, 
    for every $r\in(0,\infty)$ let 
    $K_r\subseteq[0,T)$, $O_r\subseteq O$
    satisfy $K_r=[0,\max\{T-\frac{1}{r},0\}]$
    and
    $O_r = \{  x \in O\colon \norm{x} \leq r \text{ and } \{ y \in \R^d\colon \norm{y-x} 
    < \frac{1}{r} \} \subseteq O \}$,
    let $G_i\in C((0,T)\times O\times\R\times\R^d\times\mathbb{S}_d,\R)$, $i\in\{1,2,\ldots,k\}$,
    satisfy for all $i\in\{1,2,\ldots,k\}$
    that $G_i$ is degenerate elliptic
    and upper semi-continuous,
    let $u_i\colon[0,T]\times O\to\R$,
    $i\in\{1,2,\ldots,k\}$, satisfy for all
    $i\in\{1,2,\ldots,k\}$ that 
    $u_i$ is a viscosity solution of
    \begin{equation}
    \label{eq:G_i_assumption}
    (\tfrac{\partial u_i}{\partial t})(t,x)
    +G_i(t,x,u_i(t,x),(\nabla_x u_i)(t,x),
    (\operatorname{Hess}_x u_i)(t,x))
    \geq 0
    \end{equation}   
    for $(t,x)\in(0,T)\times O$,
    assume that 
    \begin{equation}\label{eq:u_i_ass}
    \begin{split}
    &\sup_{x\in O} \bigg[\sum_{i=1}^k
    u_i(T,x) \bigg] \leq 0
    \\
    &\text{and}\qquad
    \lim_{n\to\infty} \Bigg[
    \sup_{t\in(0,T)\setminus K_n}
    \sup_{x\in O\setminus O_n}\bigg[
    \bigg( \sum_{i=1}^k u_i(t,x) \bigg)
    \sqrt{T-t}\bigg]\Bigg]
    \leq 0,
    \end{split}
    \end{equation}
    and assume for all 
    $t^{(n)}\in(0,T)$,
    $n\in\N_0$, 
    and all $(x^{(n)}_i,r^{(n)}_i,A^{(n)}_i)
    \in O\times\R\times\mathbb{S}_d$,
    $n\in\N_0$, $i\in \{1,2,\ldots,k\}$,
    with
    $\limsup_{n\to\infty}[\abs{t^{(n)}-t^{(0)}}
    +\norm{x^{(n)}_1-x^{(0)}_1}]
    +\sqrt{n}\sum_{i=2}^k\norm{x^{(n)}_i-x^{(n)}_{i-1}}
    =0<\liminf_{n\to\infty}[\sum_{i=1}^k r^{(n)}_i]
    =\limsup_{n\to\infty}[\sum_{i=1}^k r^{(n)}_i]
    \leq \sup_{n\in\N}[\sum_{i=1}^k \abs{r^{(n)}_i}]
    <\infty$
    and $\forall(n\in\N,z_1,z_2,\ldots, z_k\in\R^d)\colon
    -5\sum_{i=1}^k \norm{z_i}^2
    \leq \sum_{i=1}^k \langle z_i, A^{(n)}_i z_i\rangle
    \leq 5\sum_{i=2}^k \norm{z_i-z_{i-1}}^2$ 
    that
    \begin{equation}\label{eq:G_convergence}
    \begin{split}
    \limsup_{n\to\infty}\bigg[
    &\sum_{i=1}^k G_i\big(t^{(n)},
    x^{(n)}_i,r^{(n)}_i,
    n(\indicator{[2,k]}(i)[x^{(n)}_i-x^{(n)}_{i-1}]\\
    &\qquad
    +\indicator{[1,k-1]}(i)[x^{(n)}_i-x^{(n)}_{i+1}]),
    nA^{(n)}_i\big)\bigg]
    \leq 0.
    \end{split}
    \end{equation}
    Then it holds for all $t\in (0,T]$, $x\in O$
    that $\sum_{i=1}^k u_i(t,x)\leq 0$.
\end{lemma}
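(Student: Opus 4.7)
The plan is to argue by contradiction using a $k$-variable doubling-of-variables argument engineered so that the sequence appearing in hypothesis \eqref{eq:G_convergence} arises naturally at the resulting maximizers. Suppose for contradiction that $\sum_{i=1}^k u_i(\bar t, \bar x) > 0$ at some point $(\bar t, \bar x) \in (0,T]\times O$. The terminal estimate $\sup_{x\in O}\sum_i u_i(T,x)\le 0$ in \eqref{eq:u_i_ass} excludes $\bar t = T$, and the $\sqrt{T-t}$-weighted decay together with upper semi-continuity of each $u_i$ (as a viscosity subsolution) localizes the positive supremum to a bounded subset of $(0,T)\times O$.

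For small parameters $\beta,\delta \in (0,1)$ I would consider the penalized functional
\[
\Psi_n(t,x_1,\ldots,x_k) = \sum_{i=1}^k u_i(t,x_i) - \tfrac{n}{2}\sum_{i=2}^k \|x_i-x_{i-1}\|^2 - \beta/t - \delta\chi(t,x_1),
\]
with $\chi\in C^{1,2}$ a positive auxiliary function growing in $x_1$ and as $t\to T^-$ rapidly enough to beat the $(T-t)^{-1/2}$ envelope implied by \eqref{eq:u_i_ass}. The $-\beta/t$ term contributes a strictly negative $\partial_t$-derivative (crucial below), while $\delta\chi$ plus \eqref{eq:u_i_ass} ensures $\Psi_n\to -\infty$ at the parabolic boundary of $(0,T)\times O^k$. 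For $\beta$ sufficiently small the supremum $\sup\Psi_n$ is positive and is attained at some interior $(t^{(n)},x_1^{(n)},\ldots,x_k^{(n)})$ contained in a compact set $\mathcal{K}$ independent of $n$. Standard doubling estimates yield, along a subsequence, $t^{(n)}\to t^*\in(0,T)$, $x_i^{(n)}\to x^*$ uniformly in $i$, $\sqrt{n}\sum_{i=2}^k \|x_i^{(n)}-x_{i-1}^{(n)}\|\to 0$, and $\sum_{i=1}^k u_i(t^{(n)},x_i^{(n)})\to c$ for some $c>0$.

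The parabolic Crandall--Ishii maximum principle in its $k$-variable form (cf.~\cite{CrandallIshiiLions1992}) then supplies jets $(b_i^{(n)},p_i^{(n)},A_i^{(n)})\in (\mathfrak{P}^+u_i)(t^{(n)},x_i^{(n)})$ whose momenta $p_i^{(n)}$ equal the chain expression in \eqref{eq:G_convergence} up to a bounded $\delta\nabla_{x_1}\chi$ correction on $p_1^{(n)}$, whose rescaled matrices $A_i^{(n)}/n$ verify for large $n$ the bilateral bound $-5\sum\|z_i\|^2\le \sum\langle z_i,A_i^{(n)}z_i\rangle\le 5\sum\|z_i-z_{i-1}\|^2$ of \eqref{eq:G_convergence} (the standard Crandall--Ishii factor $3$ leaves room for the $\delta\,\mathrm{Hess}_{x_1}\chi$ correction), and whose slope identity reads
\[
\sum_{i=1}^k b_i^{(n)} = -\beta/(t^{(n)})^2 + \delta\,\partial_t\chi(t^{(n)},x_1^{(n)}).
\]
For $\delta$ small enough (since $\partial_t\chi$ is controlled on $\mathcal{K}$), this is bounded above by $-\beta/(2T^2)<0$ uniformly in $n$. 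The constructed sequence now satisfies every hypothesis of \eqref{eq:G_convergence}, so that assumption yields $\limsup_{n\to\infty}\sum_i G_i(t^{(n)},x_i^{(n)},u_i(t^{(n)},x_i^{(n)}),p_i^{(n)},nA_i^{(n)})\le 0$. On the other hand, summing the viscosity subsolution inequalities $b_i^{(n)}+G_i(\cdots)\ge 0$ gives $\sum_i G_i(\cdots)\ge -\sum_i b_i^{(n)}\ge \beta/(2T^2)$, producing in the limit the contradiction $\beta/(2T^2)\le 0$.

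The main difficulty I foresee is engineering the auxiliary $\chi$ so that \emph{simultaneously} the maximizers of $\Psi_n$ lie in a compact subset of $(0,T)\times O^k$ uniformly in $n$, $\partial_t\chi$ stays small enough for the slope identity to remain strictly negative, and the $\delta$-corrections on $(p_1^{(n)},A_1^{(n)})$ from $\nabla_{x_1}\chi$ and $\mathrm{Hess}_{x_1}\chi$ can be absorbed in a $\delta\downarrow 0$ limit using continuity of $G_1$ in $(p,A)$ (or arranged to vanish along the maximizing sequence). A secondary technical point is the $k$-variable parabolic Crandall--Ishii lemma with the specific constant $5$ in \eqref{eq:G_convergence}, which can be obtained by induction on $k$ from the two-variable version.
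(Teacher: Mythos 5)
Your overall strategy (contradiction, $k$-variable doubling with a $\frac{n}{2}\sum\|x_i-x_{i-1}\|^2$ penalization, a time penalization of the form $\beta/t$, the $k$-variable parabolic Crandall--Ishii lemma, and a slope identity producing the contradiction) follows the same skeleton as the paper. The difference is in how you localize, and that difference creates a genuine gap.

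The paper does \emph{not} introduce an auxiliary function $\chi$. Instead it absorbs the $\delta/t$ term into the subsolutions (working with $v_i = u_i - \delta/t$ and the shifted operators $H_i$) and then uses the decay hypotheses \eqref{eq:u_i_ass} together with the structure of the sets $K_r, O_r$ directly: if the global supremum $S$ of $\sum v_i$ were positive, the terminal bound, upper semi-continuity, $v_i(0,\cdot)=-\infty$, and the $\sqrt{T-t}$-weighted decay would force $S$ to be attained already over a compact product $K_N\times(O_N)^k$. The doubling and \cite[Lemma 3.1, 3.2]{beck2021nonlinear} are then carried out on that fixed compact set, and the resulting jets have first-order parts \emph{exactly} equal to $\alpha_n(\nabla_{x_i}\Phi)(t^{(\alpha_n)},x^{(\alpha_n)})=n\bigl(\indicator{[2,k]}(i)[x_i-x_{i-1}]+\indicator{[1,k-1]}(i)[x_i-x_{i+1}]\bigr)$ -- precisely the form required by \eqref{eq:G_convergence}.

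In your version, the auxiliary penalization $\delta\chi(t,x_1)$ necessarily perturbs the first jet: the Crandall--Ishii lemma gives $p_1^{(n)}=n(x_1^{(n)}-x_2^{(n)})+\delta(\nabla_{x_1}\chi)(t^{(n)},x_1^{(n)})$, and this extra term does \emph{not} vanish as $n\to\infty$ (in contrast with the $\delta\,\operatorname{Hess}_{x_1}\chi$ correction, which is of order $\delta/n$ after rescaling and is therefore harmless for the matrix bound). Hypothesis \eqref{eq:G_convergence} is not a continuity or stability statement about $G_1$: it is a limsup assertion for the \emph{specific} sequence of gradient arguments $n(\indicator{[2,k]}(i)[x_i-x_{i-1}]+\indicator{[1,k-1]}(i)[x_i-x_{i+1}])$ and nothing else. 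You flag this correction as the ``main difficulty'' but neither of your proposed remedies closes it: continuity of $G_1$ in $(p,A)$ gives no uniformity along the diverging sequence of jets (and the hypothesis does not grant any modulus of continuity), and you cannot ``arrange'' $(\nabla_{x_1}\chi)(t^{(n)},x_1^{(n)})$ to vanish without already knowing the limiting maximizer in advance. The fix is simply to drop $\chi$ and instead observe, as the paper does, that the compactness of $K_N\times O_N$ together with \eqref{eq:u_i_ass} and upper semi-continuity already yields a maximizer over a compact set, so that the jets produced by the doubling are of exactly the required form. Once you do that, the rest of your argument (slope identity $\sum_i b_i^{(\alpha_n)}=0$ after encoding $-\delta/t$ into $v_i$, the matrix bound with constant $5$, the verification that $r_i^{(\alpha_n)}=u_i(t^{(\alpha_n)},x_i^{(\alpha_n)})$ satisfies the $\liminf=\limsup>0$ and $\sup_n\sum|r_i^{(\alpha_n)}|<\infty$ conditions via $\alpha_n\Phi(t^{(\alpha_n)},x^{(\alpha_n)})\to 0$) goes through and matches the paper.
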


\begin{proof}[Proof of Lemma~\ref{lem:vs3_3}]
    The goal of this proof is to show
    that for all $t\in (0,T]$, $x\in O$
    it holds that $\sum_{i=1}^k u_i(t,x)\leq 0$
    by demonstrating that for all $\delta \in (0,\infty)$,
    $t\in (0,T]$, $x\in O$
    it holds that
    $\sum_{i=1}^k u_i(t,x)\leq \frac{k\delta}{t}$.
    Throughout this proof let 
    $\delta\in (0,\infty)$, 
    let $v_i\colon [0,T]\times O\to [-\infty,\infty)$,
    $i\in\{1,2,\ldots,k\}$, satisfy
    for all $i\in\{1,2,\ldots,k\}$,
    $t\in[0,T]$, $x\in O$ that
    \begin{equation}
    v_i(t,x)=
    \begin{cases}
    u_i(t,x)-\frac{\delta}{t} &\colon t>0 \\
    -\infty &\colon t=0,
    \end{cases}
    \end{equation}
    let $H_i\colon (0,T)\times O\times\R
    \times\R^d\times\mathbb{S}_d\to\R$,
    $i\in\{1,2,\ldots,k\}$,
    satisfy for all $i\in\{1,2,\ldots,k\}$,
    $t\in(0,T)$, $x\in O$, $r\in\R$,
    $p\in\R^d$, $A\in\mathbb{S}_d$
    that
    \begin{equation}\label{eq:H_i}
    H_i(t,x,r,p,A)=G_i(t,x,r+\tfrac{\delta}{t},
    p,A)-\tfrac{\delta}{t^2}, 
    \end{equation}
    let $\Phi\colon[0,T]\times (\R^d)^k\to [0,\infty)$
    and $\eta\colon [0,T]\times (\R^d)^k\to[-\infty,\infty)$
    satisfy for all $t\in [0,T]$,
    $x=(x_1,x_2,\ldots,x_k)\in (\R^d)^k$ that
    $\eta(t,x)=\sum_{i=1}^k v_i(t,x_i)$ and
    \begin{equation}\label{eq:Phi}
    \Phi(t,x)= \frac{1}{2}\bigg[
    \sum_{i=2}^k \norm{x_i-x_{i-1}}^2\bigg],
    \end{equation}
    let $S\in (-\infty,\infty]$ satisfy
    $S=\sup_{t\in [0,T]}\sup_{x\in O}
    [\sum_{i=1}^k v_i(t,x)]$,
    let $S_{\alpha,r}\in (-\infty,\infty]$, 
    $\alpha,r \in [0,\infty)$,
    satisfy for all $\alpha, r\in [0,\infty)$
    that
    \begin{equation}
    S_{\alpha,r} = \sup_{t\in K_r}
    \sup_{x\in (O_r)^k}
    [\eta(t,x)-\alpha\Phi(t,x)],
    \end{equation}
    and let $\Vnorm{\cdot}
    \colon\R^{(kd)\times (kd)}\to[0,\infty)$
    satisfy for all $A\in \R^{(kd)\times (kd)}$
    that
    \begin{equation}
    \begin{split}
    \Vnorm{A}
    = \sup\Bigg\{
    &\bigg[
    \sum_{i=1}^{kd}\abs{y_i}^2\bigg]^{\frac{1}{2}}
    \bigg[\sum_{i=1}^{kd}\abs{x_i}^2
    \bigg]^{-\frac{1}{2}}
    \colon
    \begin{pmatrix}
    x=(x_1,x_2,\ldots,x_{kd})\in\R^{kd}\setminus
    \{0\},\\
    y=(y_1,y_2,\ldots,y_{kd})\in\R^{kd},\\
    y=Ax
    \end{pmatrix}     
    \Bigg\}.
    \end{split}
    \end{equation}    
    First observe that 
    \eqref{eq:u_i_ass}, the fact that 
    $\sup_{x\in O}[\sum_{i=1}^kv_i(0,x)]
    =-\infty$, and the fact that 
    for all $i\in\{1,2,\ldots,k\}$
    it holds that $v_i\leq u_i$
    show that
    \begin{equation}\label{eq:v_i_ass}
    \begin{split}
    &\sup_{x\in O}\bigg[ \sum_{i=1}^k
    v_i(T,x) \bigg] 
    \leq 0 \qquad
    \text{and}\\ 
    &
    \limsup_{n\to\infty} \Bigg[
    \sup_{t\in [0,T)\setminus K_n}
    \sup_{x\in O\setminus O_n}
    \bigg[
    \sum_{i=1}^k v_i(t,x) \sqrt{T-t}\bigg]\Bigg]
    \leq 0.
    \end{split}
    \end{equation}
    In addition, note that the 
    assumption that for all $i\in\{1,2,\ldots,k\}$
    it holds that $u_i$ is upper 
    semi-continuous ensures that for all
    $i\in\{1,2,\ldots,k\}$ it holds that
    $v_i$ is upper semi-continuous.
    Moreover, observe that 
    \eqref{eq:G_i_assumption} and
    \eqref{eq:H_i} imply that 
    for all $i\in\{1,2,\ldots,k\}$
    it holds that $v_i$ is a
    viscosity solution of
    \begin{equation}\label{eq:v_i_vs}
    (\tfrac{\partial v_i}{\partial t})(t,x)
    +H_i(t,x,v_i(t,x), (\nabla_x v_i)(t,x),
    (\operatorname{Hess}_x v_i)(t,x))\geq 0
    \end{equation}
    for $(t,x)\in(0,T)\times O$.
    Next we claim that for all
    $t\in [0,T]$, $x\in O$
    it holds that
    \begin{equation}\label{eq:S_smaller_zero}
    S= \sup_{t\in [0,T]}\sup_{x\in O}
    \bigg[\sum_{i=1}^k v_i(t,x)\bigg]
    \leq 0.
    \end{equation}
    We prove \eqref{eq:S_smaller_zero}
    by contradiction. 
    For this assume that $S\in (0,\infty]$.
    Note that the hypothesis that 
    $S\in (0,\infty]$ and 
    \eqref{eq:v_i_ass}
    imply that there exists $N\in\N$
    which satisfies that
    \begin{enumerate}[label=(\Roman*)]
        \item\label{it:O_N_nonempty}
        it holds that $K_N\neq\emptyset$
        and $O_N\neq \emptyset$,
        \item\label{it:O_N_compact} 
        it holds that $K_N$ and 
        $O_N$ are compact, and
        \item\label{it:O_N_S} 
        it holds that
        $\sup_{t\in K_N}\sup_{x\in O_N}
        [\sum_{i=1}^k v_i(t,x)]=S$.
    \end{enumerate} 
    The fact that for all 
    $i\in\{1,2,\ldots,k\}$ 
    it holds that $v_i$ is upper semi-continuous
    therefore shows that $S\in (0,\infty)$.
    Moreover, observe that 
    item~\ref{it:O_N_S} and
    the fact that for all
    $i\in\{1,2,\ldots,k\}$ 
    it holds that 
    $\sup_{x\in O} v_i(0,x)=-\infty$
    ensure that 
    $S= \sup_{t\in K_N\cap(0,T]}\sup_{x\in O_N}$
    $[\sum_{i=1}^k v_i(t,x)]$.
    Next note that
    the fact that 
    $\Phi\in C([0,T]\times (\R^d)^k, \R)$
    and the fact that for all
    $i\in\{1,2,\ldots,k\}$ 
    it holds that $v_i$ is
    upper semi-continuous
    demonstrate that 
    for all $\alpha\in(0,\infty)$
    it holds that
    $K_N \times (O_N)^{k} \ni (t,x)
    \mapsto \eta(t,x)-\alpha\Phi(t,x)
    \in[-\infty,\infty)$
    is upper semi-continuous.
    Item~\ref{it:O_N_compact}
    hence proves that there exists
    $t^{(\alpha)}\in K_N$,
    $\alpha\in (0,\infty)$, and
    $x^{(\alpha)}=(x^{(\alpha)}_1,x^{(\alpha)}_2,
    \ldots,x^{(\alpha)}_k)\in (O_N)^k$,
    $\alpha\in (0,\infty)$,
    which satisfy for all 
    $\alpha \in (0,\infty)$ that
    \begin{equation}
    \label{eq:eta_alpha_phi_globmax}
    \eta(t^{(\alpha)},x^{(\alpha)})
    -\alpha\Phi(t^{(\alpha)},x^{(\alpha)})
    =\sup_{t\in K_N}\sup_{x\in (O_N)^k}
    [\eta(t,x)-\alpha\Phi(t,x)]
    =S_{\alpha,N}.
    \end{equation}
    Furthermore, observe that
    the fact that for all $t\in [0,T]$,
    $y\in O$ it holds that
    $\eta(t,y,y,\ldots,y)$
    $= \sum_{i=1}^k v_i(t,y)$
    and the fact that for all $t\in [0,T]$,
    $y\in O$ it holds that
    $\Phi(t,y,y,\ldots,y)=0$
    show that
    for all $\alpha\in (0,\infty)$
    it holds that
    \begin{equation}\label{eq:S_alpha_N}
    \begin{split}
    &S_{\alpha,N}
    \geq \sup_{t\in [0,T]}\sup_{y\in O}
    [\eta(t,y,y,\ldots,y)-\alpha
    \Phi(t,y,y,\ldots,y)]\\
    &=\sup_{t\in [0,T]}\sup_{y\in O}
    \bigg[\sum_{i=1}^k v_i(t,y)\bigg]
    =S > 0.
    \end{split}
    \end{equation}
    This and
    the fact that for all 
    $\alpha,\beta\in (0,\infty)$ 
    with $\alpha\geq \beta$
    it holds that
    $S_{\alpha,N}\leq S_{\beta, N}$
    ensure that
    $\liminf_{\alpha\to \infty} S_{\alpha,N}
    =\limsup_{\alpha\to\infty} S_{\alpha,N}
    \in [S,\infty)\subseteq \R$.
    Next observe that
    \eqref{eq:S_alpha_N}
    and the fact that for all 
    $\alpha\in (0,\infty)$
    it holds that
    $\sup_{x\in O^k}[\eta(0,x)-\alpha\Phi(0,x)]
    =-\infty$ 
    imply that
    for all $\alpha\in(0,\infty)$
    it holds that
    \begin{equation}
    S_{\alpha,N}
    =\sup_{t\in K_N\cap(0,T]}\sup_{x\in (O_N)^k}
    [\eta(t,x)-\alpha\Phi(t,x)].
    \end{equation}
    Combining this and 
    \cite[Lemma 3.1 (i)]{beck2021nonlinear}
    (applied with $\mathcal{O}\curvearrowleft
    (K_N\cap(0,T])\times (O_N)^k$, 
    $\eta\curvearrowleft \eta|_{(K_N\cap(0,T])
        \times(O_N)^k}$,
    $\phi\curvearrowleft\Phi|_{(K_N\cap (0,T])
        \times (O_N)^k}$,
    $x\curvearrowleft ((0,\infty)\ni\alpha
    \mapsto (t^{(\alpha)},x^{(\alpha)})\in 
    (K_N\cap (0,T])\times (O_N)^k)$
    in the notation of \cite[Lemma 3.1]{beck2021nonlinear})
    demonstrates that
    \begin{equation}\label{eq:alpha_Phi_limit}
    0=
    \limsup_{\alpha\to \infty}
    [\alpha\Phi(t^{(\alpha)},x^{(\alpha)})]
    =\limsup_{\alpha\to\infty}\bigg[
    \frac{\alpha}{2}\sum_{i=2}^k 
    \norm{x^{(\alpha)}_i-x^{(\alpha)}_{i-1}}^2
    \bigg].
    \end{equation}
    In the next step note that
    item~\ref{it:O_N_compact}
    ensures
    that there exist $\mathfrak{t}\in K_N$,
    $\mathfrak{x}=(\mathfrak{x}_1, \mathfrak{x}_2,
    \ldots,$ 
    $\mathfrak{x}_k)\in (O_N)^k$,
    $(\alpha_n)_{n\in\N}\subseteq \N$
    which satisfy 
    $\liminf_{n\to\infty}\alpha_n=\infty$
    and $\limsup_{n\to\infty}
    [\abs{t^{(\alpha_n)}- \mathfrak{t}}
    +\norm{x^{(\alpha_n)}-\mathfrak{x}}]
    =0$.
    Moreover, observe that
    the fact that $\eta$ is upper
    semi-continuous 
    and the fact that $\Phi$ is continuous
    show
    that
    \begin{equation}
    \eta(\mathfrak{t},\mathfrak{x})
    \geq \limsup_{n\to\infty}
    [\eta(t^{(\alpha_n)},x^{(\alpha_n)})
    -\alpha_n\Phi(t^{(\alpha_n)},x^{(\alpha_n)})]
    \geq S
    > 0.
    \end{equation}
    The fact that
    for all $x\in O^k$ it holds that
    $\eta(0,x)=-\infty$ 
    hence demonstrates
    that $\mathfrak{t}\in K_N\cap(0,T]
    \subseteq (0,T)$.
    In addition, note that
    \eqref{eq:S_alpha_N}
    and the fact
    that for all $\alpha\in (0,\infty)$
    it holds that 
    $\eta(0,x^{(\alpha)})
    -\alpha\Phi(0,x^{(\alpha)})
    = -\infty$
    imply that for all $n\in\N$
    it holds that
    $t^{(\alpha_n)}\in K_N\cap (0,T]
    \subseteq (0,T)$.
    This and
    \cite[Lemma 3.1 (ii)]{beck2021nonlinear}
    (applied with $\mathcal{O}\curvearrowleft
    (K_N\cap(0,T])\times (O_N)^k$, 
    $\eta\curvearrowleft \eta|_{(K_N\cap(0,T])
        \times(O_N)^k}$,
    $\phi\curvearrowleft\Phi|_{(K_N\cap (0,T])
        \times (O_N)^k}$,
    $x\curvearrowleft ((0,\infty)\ni\alpha
    \mapsto (t^{(\alpha)},x^{(\alpha)})\in 
    (K_N\cap (0,T])\times (O_N)^k)$
    in the notation of \cite[Lemma 3.1]{beck2021nonlinear})
    prove that
    $0= \Phi(\mathfrak{t},\mathfrak{x})
    =\frac{1}{2}\sum_{i=2}^k\norm{\mathfrak{x}_i
        -\mathfrak{x}_{i-1}}^2$
    and 
    $\eta(\mathfrak{t},\mathfrak{x})
    =\sup_{(t,x)\in[\Phi^{-1}(0)]\cap 
        [(K_N\cap(0,T])\times (O_N)^k]}$ 
    $\eta(t,x)$.
    Therefore, we obtain
    for all $i\in \{1,2,\ldots, k\}$
    that
    $\mathfrak{x}_i = \mathfrak{x}_1$
    and
    \begin{equation}
    \label{eq:conv_to_S}
    S \leq 
    \lim_{\alpha\to\infty} S_{\alpha,N}
    \leq \sum_{i=1}^k v_i(\mathfrak{t},\mathfrak{x}_i)
    =\eta (\mathfrak{t},\mathfrak{x})
    =\sup_{t\in K_N}\sup_{y\in O_N}
    \bigg[\sum_{i=1}^k v_i(t,y) \bigg]  
    \leq S.
    \end{equation}
    %
    %
    Next observe that
    \cite[Lemma 3.2]{beck2021nonlinear}
    (applied for every $n\in\N$ with 
    $\mathcal{O}\curvearrowleft O$,
    $\varepsilon\curvearrowleft\frac{1}{\alpha_n}$,
    $\Phi\curvearrowleft \alpha_n\Phi|_{(0,T)\times O^k}$,
    $(u_i)_{i\in\{1,2,\ldots,k\}}\curvearrowleft
    (v_i|_{(0,T)\times O})_{i\in\{1,2,\ldots,k\}}$,
    $(G_i)_{i\in\{1,2,\ldots,k\}}
    \curvearrowleft (H_i)_{i\in\{1,2,\ldots,k\}}$,
    $\mathfrak{t}\curvearrowleft t^{(\alpha_n)}$,
    $\mathfrak{x}\curvearrowleft x^{(\alpha_n)}$
    in the notation of \cite[Lemma 3.2]{beck2021nonlinear})
    and \eqref{eq:eta_alpha_phi_globmax}
    demonstrates
    that there exist
    $b^{(\alpha_n)}_1, b^{(\alpha_n)}_2,\ldots, b^{(\alpha_n)}_k\in\R$, $n\in\N$,
    and
    $A^{(\alpha_n)}_1,A^{(\alpha_n)}_2,\ldots, A^{(\alpha_n)}_k\in\mathbb{S}_d$,
    $n\in\N$,
    which satisfy that
    \begin{enumerate}[label=(\Alph*)]
        \item \label{it:b_n_A_n_1}
        for all $n\in\N$,
        $i\in\{1,2,\ldots, k\}$ it holds that
        \begin{equation}
        (b^{(\alpha_n)}_i,\alpha_n(\nabla_{x_i}\Phi)(t^{(\alpha_n)},
        x^{(\alpha_n)}),
        \alpha_n A^{(\alpha_n)}_i)
        \in (\mathfrak{P}^+ v_i)(t^{(\alpha_n)},x^{(\alpha_n)}_i), 
        \end{equation}
        \item\label{it:b_n_A_n_2} 
        for all $n\in\N$
        it holds that
        $\sum_{i=1}^k b^{(\alpha_n)}_i
        =\alpha_n (\frac{\partial}{\partial t}\Phi)(t^{(\alpha_n)},x^{(\alpha_n)})
        =0$,
        and 
        \item\label{it:b_n_A_n_3}
        for all $n\in\N$
        it holds that
        \begin{equation}\label{eq:Hess_Phi1}
        \begin{split}
        &\Big(-\alpha_n+\alpha_n \Vnorm{
            (\operatorname{Hess}_x\Phi)
            \big(t^{(\alpha_n)},x^{(\alpha_n)}\big)} \Big)
        I_{kd} \\
        &\leq \alpha_n \begin{pmatrix}
        A^{(\alpha_n)}_1 &  \ldots & 0\\
        \vdots & \ddots & \vdots\\
        0 & \ldots & A^{(\alpha_n)}_k
        \end{pmatrix}\\
        &\leq \alpha_n\big(\operatorname{Hess}_x\Phi)(t^{(\alpha_n)},x^{(\alpha_n)}\big)
        +\tfrac{1}{\alpha_n}
        \Big[\alpha_n (\operatorname{Hess}_x\Phi)
        \big(t^{(\alpha_n)},x^{(\alpha_n)}\big)\Big]^2.
        \end{split}
        \end{equation}
    \end{enumerate}
    Note that the fact that for all $t\in (0,T)$,
    $x\in O^k$ it holds that
    $(\operatorname{Hess}_x\Phi)(t,x)
    =(\operatorname{Hess}_x\Phi)(0,0)$
    and item~\ref{it:b_n_A_n_3}
    prove that
    for all $n\in\N$
    it holds that
    \begin{equation}\label{eq:Hess_Phi2}
    \begin{split}
    &-\big(1+\Vnorm{
        (\operatorname{Hess}_x\Phi)(0,0)}\big)
    I_{kd}\\
    &\leq \begin{pmatrix}
    A^{(\alpha_n)}_1 & \ldots & 0\\
    \vdots & \ddots & \vdots\\
    0 & \ldots & A^{(\alpha_n)}_k
    \end{pmatrix}
    \leq (\operatorname{Hess}_x\Phi)(0,0)
    +\big[(\operatorname{Hess}_x\Phi)(0,0)\big]^2.
    \end{split}
    \end{equation}      
    Furthermore, observe that
    \cite[Lemma 2.16]{beck2021nonlinear}
    (applied for all 
    $i\in \{ 1,\ldots,k\}$ with 
    $u\curvearrowleft v_i$,
    $G\curvearrowleft H_i$
    in the notation of 
    \cite[Lemma 2.16]{beck2021nonlinear}), 
    item~\ref{it:b_n_A_n_1},
    and \eqref{eq:v_i_vs}
    ensure that
    for all $n\in\N$,
    $i\in\{1,2,\ldots,k\}$
    it holds that
    \begin{equation}
    b^{(\alpha_n)}_i 
    + H_i\Big(t^{(\alpha_n)},x^{(\alpha_n)}_i,
    v_i(t^{(\alpha_n)},x^{(\alpha_n)}_i),
    \alpha_n(\nabla_{x_i}\Phi)
    (t^{(\alpha_n)},x^{(\alpha_n)}), 
    \alpha_n A^{(\alpha_n)}_i\Big)
    \geq 0.
    \end{equation}
    Item~\ref{it:b_n_A_n_2},
    \eqref{eq:H_i}
    and the fact that 
    $\frac{\partial}{\partial t}
    \Phi = 0$ hence
    show that 
    for all $n\in\N$
    it holds that
    \begin{equation}\label{eq:G_i_upper}
    \begin{split}
    &\sum_{i=1}^k G_i\Big(
    t^{(\alpha_n)},x^{(\alpha_n)}_i,
    v_i(t^{(\alpha_n)},x^{(\alpha_n)}_i)+\frac{\delta}{t^{(\alpha_n)}},
    \alpha_n(\nabla_{x_i}\Phi)(t^{(\alpha_n)},x^{(\alpha_n)}),
    \alpha_n A^{(\alpha_n)}_i \Big)\\
    &\geq \frac{k\delta}{[t^{(\alpha_n)}]^2}.
    \end{split}
    \end{equation}
    Throughout the rest of the proof let
    $r^{(n)}_i$, $n\in\N$, $i\in\{1,2,\ldots,k\}$
    satisfy for all $n\in\N$,
    $i\in\{1,2,\ldots, k\}$ that
    \begin{equation}\label{eq:r_i^n}
    r^{(n)}_i= v_i(t^{(n)}, x_i^{(n)})
    +\frac{\delta}{t^{(n)}}. 
    \end{equation}
    This 
    and the fact that 
    $\limsup_{n\to \infty}
    \abs{\eta(t^{(\alpha_n)},
        x^{(\alpha_n)})-S}=0$
    ensure that
    \begin{equation}\label{eq:sum_r_n}
    \liminf_{n\to\infty}\bigg[
    \sum_{i=1}^k r^{(\alpha_n)}_i\bigg]
    =\limsup_{n\to\infty}\bigg[
    \sum_{i=1}^k r^{(\alpha_n)}_i\bigg]
    = S +\frac{k\delta}{\mathfrak{t}}
    >0.
    \end{equation}
    In  addition, observe that
    the fact that 
    $\{(t^{(\alpha_n)}, x^{(\alpha_n)})
    \in (0,T)\times O^k\colon n\in\N\}
    \cup \{(\mathfrak{t},\mathfrak{x})\}$
    is compact 
    and the fact that for all
    $i\in\{1,2,\ldots, k\}$
    it holds that $v_i$ is
    upper semi-continuous 
    demonstrate that
    \begin{equation}\label{eq:sup_r_n}
    \sup\big\{r^{(\alpha_n)}_i\colon
    n\in\N, i\in \{1,2,\ldots,k\}\big\}
    <\infty.
    \end{equation}  
    This and 
    \eqref{eq:sum_r_n}
    show that
    \begin{equation}
    \sup_{n\in\N}\bigg[
    \sum_{i=1}^k 
    r^{(\alpha_n)}_i\bigg]
    <\infty.
    \end{equation} 
    %
    Moreover, observe that
    \eqref{eq:Phi}
    ensures that
    for all $t\in (0,T)$, 
    $x=(x_1,x_2,\ldots,x_k)\in O^k$
    it holds that
    \begin{equation}
    \label{eq:nabla_Phi}
    \begin{split}
    &\Big(\frac{\partial}{\partial x_i}
    \Phi\Big)(t,x)
    =\begin{cases}
    x_1-x_2 &\colon 1=i<k\\
    2x_i-x_{i-1}-x_{i+1} &\colon 1<i<k\\
    x_k-x_{k-1} &\colon 1< i=k\\
    0 &\colon 1=i=k
    \end{cases}\\
    &= \indicator{[2,k]}(i)
    [x_i-x_{i-1}]
    +\indicator{[1,k-1]}(i)
    [x_i-x_{i+1}].
    \end{split}
    \end{equation}
    This and
    the Taylor expansion
    $\forall z \in (\R^d)^k\colon
    \Phi(0,z)=\Phi(0,0)
    +\langle (\nabla_x \Phi)(0,0),z\rangle
    +\frac{1}{2}\langle z,
    (\operatorname{Hess}_x \Phi)(0,0)z\rangle$
    $= \frac{1}{2}\langle z,
    (\operatorname{Hess}_x \Phi)(0,0)z\rangle$
    demonstrate that
    for all $z\in (\R^d)^k$
    it holds that
    \begin{equation}
        (\nabla_x\Phi)(0,z)
        =(\operatorname{Hess}_x\Phi)(0,0)z.
    \end{equation}
    Combining this with
    \eqref{eq:nabla_Phi}
    and the fat that for all 
    $a,b\in\R$ 
    it holds that
    $(a+b)^2\leq 2(a^2+b^2)$
    proves that
    for all $z\in (\R^d)^k$
    it holds that
    \begin{equation}\label{eq:Hess_Phi^2_1}
    \begin{split}
    &\langle z, ((\operatorname{Hess}_x\Phi)
    (0,0))^2 z\rangle
    = \langle (\operatorname{Hess}_x\Phi)(0,0)z,
    (\operatorname{Hess}_x \Phi)(0,0)z\rangle\\
    &=\norm{(\operatorname{Hess}_x\Phi)(0,0)z}^2
    =\norm{(\nabla_x \Phi)(0,z)}^2\\
    &=\norm{z_1-z_2}^2
    +\bigg[
    \sum_{i=2}^{k-1}\norm{2z_i
        -z_{i-1}-z_{i+1}}^2\bigg]
    +\norm{z_k-z_{k-1}}^2\\
    &\leq 2\norm{z_1-z_2}^2+\bigg[
    \sum_{i=2}^{k-1}2(\norm{z_i
        -z_{i-1}}^2+\norm{z_i-z_{i+1}}^2)\bigg]
    +2\norm{z_k-z_{k-1}}^2\\
    &= 4\bigg[\sum_{i=2}^k 
    \norm{z_i-z_{i-1}}^2 \bigg].   
    \end{split}
    \end{equation}
    Hence, we obtain for all
    $z\in (\R^d)^k$ 
    that
    \begin{equation}\label{eq:Hess_Phi^2_2}
    \norm{(\operatorname{Hess}_x \Phi)(0,0)
        z}^2
    \leq 8 \bigg[\sum_{i=2}^k \norm{z_i}^2\bigg]
    + 8 \bigg[\sum_{i=2}^k \norm{z_{i-1}}^2\bigg]
    \leq 16 \norm{z}^2.
    \end{equation}
    This ensures that
    $\Vnorm{
        (\operatorname{Hess}_x\Phi)(0,0)}
    \leq 4$.
    Combining 
    this with \eqref{eq:Hess_Phi2}
    and \eqref{eq:Hess_Phi^2_1}
    demonstrates that
    for all $n\in\N$,
    $z_1,z_2,\ldots, z_k \in\R^d$
    it holds that
    \begin{equation}
    \begin{split}
    &-5\bigg[\sum_{i=1}^k \norm{z_i}^2 \bigg]
    \leq \sum_{i=1}^k \langle z_i,
    A^{(\alpha_n)}_i z_i \rangle\\
    &\leq 2\Phi(0,z) + \langle z,
    (\operatorname{Hess}_x\Phi)(0,0))^2z
    \rangle 
    \leq 5 \bigg[ \sum_{i=2}^k
    \norm{z_i-z_{i-1}}^2\bigg].
    \end{split}
    \end{equation}
    Combining this with
    \eqref{eq:G_convergence}
    and \eqref{eq:G_i_upper}
    -\eqref{eq:nabla_Phi}
    proves that
    \begin{equation}
    \begin{split}
    &0 < \frac{k\delta}{\mathfrak{t}^2}
    =\limsup_{n\to\infty} \frac{k\delta}{[t^{(\alpha_n)}]^2}\\
    &\leq \limsup_{n\to\infty} \bigg[
    \sum_{i=1}^k G_i\Big(t^{(\alpha_n)},
    x^{(\alpha_n)}_i, 
    r^{(\alpha_n)}_i,
    \alpha_n\big(\indicator{[2,k]}(i)[x^{(\alpha_n)}_i-x^{(\alpha_n)}_{i-1}]\\
    &\qquad
    +\indicator{[1,k-1]}(i)[x^{(\alpha_n)}_i-x^{(\alpha_n)}_{i+1}]\big),
    \alpha_n A^{(\alpha_n)}_i\Big)\bigg]
    \leq 0.
    \end{split}
    \end{equation}
    This contradiction implies that
    $S\leq 0$.
    Therefore, we obtain that
    for all $t\in (0,T]$, $y\in O$
    it holds that
    $\sum_{i=1}^k u_i(t,y)
    \leq \frac{k\delta}{t}$.
    The proof of Lemma~\ref{lem:vs3_3}
    is thus complete.
\end{proof}

The following corollary
extends \cite[Corollary 3.4]{beck2021nonlinear}
which assumes \eqref{eq:u_ass} 
to hold without $\sqrt{T-t}$
and $(0,T)\setminus K_r$ replaced
by $(0,T)$.

\begin{corollary}\label{cor:vs3_4}
    Let $d\in\N$, $T\in(0,\infty)$,
    let $\langle\cdot,\cdot\rangle\colon
    \R^d\times\R^d\to\R$ be the standard
    Euclidean scalar product on $\R^d$,
    let $\norm{\cdot}\colon\R^d\to[0,\infty)$
    be the standard Euclidean norm on $\R^d$,
    let $O\subseteq \R^d$ be a non-empty 
    open set, 
    for every $r\in(0,\infty)$ let 
    $K_r\subseteq[0,T)$, $O_r\subseteq O$
    satisfy $K_r=[0,\max\{T-\frac{1}{r},0\}]$
    and
    $O_r = \{  x \in O\colon \norm{x} \leq r \text{ and } \{ y \in \R^d\colon \norm{y-x}
    < \frac{1}{r} \} \subseteq O \}$,
    let $G\in C((0,T)\times O\times\R\times\R^d\times\mathbb{S}_d,\R)$,
    $u,v\in C([0,T]\times O, \R)$,
    assume that
    \begin{equation}
    \label{eq:u_ass}
    \begin{split}
    &\sup\nolimits_{x\in O}(u(T,x)-v(T,x))\leq 0
    \quad \text{and}\\
    &\inf\nolimits_{r\in(0,\infty)}\Big[
    \sup\nolimits_{t\in(0,T)\setminus K_r}
    \sup\nolimits_{x\in O\setminus O_r}
    \big[(u(t,x)-v(t,x))\sqrt{T-t}\big]\Big]
    \leq 0,
    \end{split}
    \end{equation}
    assume that $G$ is degenerate elliptic,
    assume that $u$ is a viscosity solution of
    \begin{equation}
    (\tfrac{\partial u}{\partial t})(t,x)
    +G(t,x,u(t,x),(\nabla_x u)(t,x),
    (\operatorname{Hess}_x u)(t,x))
    \geq 0
    \end{equation}   
    for $(t,x)\in(0,T)\times O$,
    assume that $v$ is a viscosity solution of
    \begin{equation}
    \label{eq:v_vs4}
    (\tfrac{\partial v}{\partial t})(t,x)
    +G(t,x,v(t,x),(\nabla_x v)(t,x),
    (\operatorname{Hess}_x v)(t,x))
    \leq 0
    \end{equation}
    for $(t,x)\in(0,T)\times O$,
    and assume for all $t_n\in(0,T)$,
    $n\in\N_0$, 
    all $(x_n,r_n,A_n)\in O\times\R\times\mathbb{S}_d$,
    $n\in\N_0$,
    and all $(\mathfrak{x}_n,\mathfrak{r}_n,
    \mathfrak{A}_n)\in O\times\R\times\mathbb{S}_d$,
    $n\in\N_0$, with
    $\limsup_{n\to\infty}[\abs{t_n-t_0}
    +\norm{x_n-x_0}]+\sqrt{n}\norm{x_n-\mathfrak{x}_n}
    =0<\liminf_{n\to\infty}(r_n-\mathfrak{r}_n)
    =\limsup_{n\to\infty}(r_n-\mathfrak{r}_n)
    \leq \sup_{n\in\N}(\abs{r_n}+\abs{\mathfrak{r}_n})
    <\infty$
    and $\forall(n\in\N,z,\mathfrak{z}\in\R^d)\colon
    -5(\norm{z}^2+\norm{\mathfrak{z}}^2)
    \leq \langle z, A_nz\rangle
    -\langle\mathfrak{z},\mathfrak{A}_n\mathfrak{z}\rangle
    \leq 5\norm{z-\mathfrak{z}}^2$ 
    that
    \begin{equation}\label{eq:G_convergence2}
    \limsup_{n\to\infty}[G(t_n,x_n,r_n,
    n(x_n-\mathfrak{x}_n),nA_n)
    -G(t_n,\mathfrak{x}_n,\mathfrak{r}_n,
    n(x_n-\mathfrak{x}_n),n\mathfrak{A}_n)]
    \leq 0.
    \end{equation}
    Then it holds for all $t\in [0,T]$,
    $x\in O$ that
    $u(t,x)\leq v(t,x)$.
\end{corollary}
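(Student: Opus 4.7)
The plan is to reduce the comparison to Lemma \ref{lem:vs3_3} applied with $k=2$ by setting $u_1\coloneqq u$, $u_2\coloneqq -v$, $G_1\coloneqq G$, and $G_2(t,x,r,p,A)\coloneqq -G(t,x,-r,-p,-A)$. First I would check that $G_2$ satisfies the hypotheses imposed on $G_i$: continuity (hence upper semi-continuity) is inherited from $G$, and degenerate ellipticity of $G_2$ follows from degenerate ellipticity of $G$ applied with signs reversed on the matrix argument (if $\langle Ay,y\rangle\leq\langle By,y\rangle$, then $\langle(-B)y,y\rangle\leq\langle(-A)y,y\rangle$, so $G(t,x,-r,-p,-B)\leq G(t,x,-r,-p,-A)$). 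I would then show that $u_2=-v$ is a viscosity subsolution of $(\partial_t u_2)+G_2(\cdot,u_2,\nabla_x u_2,\operatorname{Hess}_x u_2)\geq 0$: testing against $\psi\geq -v$ with $\psi(t,x)=-v(t,x)$, set $\phi\coloneqq -\psi$, so that $\phi\leq v$ and $\phi(t,x)=v(t,x)$; the supersolution property of $v$ applied to $\phi$ yields $(\partial_t\phi)+G(\cdot,\phi,\nabla_x\phi,\operatorname{Hess}_x\phi)\leq 0$, which rearranges to the required inequality for $\psi$.

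Next I would translate the remaining hypotheses. Since $u_1+u_2=u-v$, the terminal and far-field conditions in \eqref{eq:u_i_ass} follow immediately from \eqref{eq:u_ass}. The key step is the operator-convergence condition \eqref{eq:G_convergence}: for $k=2$ the indicator functions collapse and the sum becomes
\[
G(t^{(n)},x_1^{(n)},r_1^{(n)},n(x_1^{(n)}-x_2^{(n)}),nA_1^{(n)}) - G(t^{(n)},x_2^{(n)},-r_2^{(n)},n(x_1^{(n)}-x_2^{(n)}),-nA_2^{(n)})
\]
after expanding $G_2$. Identifying $x_n\coloneqq x_1^{(n)}$, $\mathfrak{x}_n\coloneqq x_2^{(n)}$, $r_n\coloneqq r_1^{(n)}$, $\mathfrak{r}_n\coloneqq -r_2^{(n)}$, $A_n\coloneqq A_1^{(n)}$, $\mathfrak{A}_n\coloneqq -A_2^{(n)}$, the spacing condition, the $r$-summability bounds, and the matrix sandwich of Lemma \ref{lem:vs3_3} translate termwise to the corresponding hypotheses of \eqref{eq:G_convergence2}, so the corollary's assumption supplies the required $\limsup\leq 0$. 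Lemma \ref{lem:vs3_3} then yields $u(t,x)-v(t,x)\leq 0$ for all $(t,x)\in(0,T]\times O$; the case $t=0$ follows by taking $t\to 0^+$ using continuity of $u-v$ on $[0,T]\times O$.

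I do not anticipate a serious obstacle: the argument is a sign-flip translation. The step requiring the most care is the termwise verification that all quantitative hypotheses of \eqref{eq:G_convergence} match those of \eqref{eq:G_convergence2} under the dictionary above. The identities $\sum_{i=1}^2 r_i^{(n)}=r_n-\mathfrak r_n$ and $\sum_{i=1}^2|r_i^{(n)}|=|r_n|+|\mathfrak r_n|$ handle the scalar part, while $\sum_{i=1}^2\langle z_i,A_i^{(n)}z_i\rangle=\langle z,A_nz\rangle-\langle\mathfrak z,\mathfrak A_n\mathfrak z\rangle$ (with $z_1\leftarrow z$, $z_2\leftarrow\mathfrak z$) together with $\sum_{i=2}^2\|z_i-z_{i-1}\|^2=\|z-\mathfrak z\|^2$ handles the matrix sandwich, matching exactly the bounds assumed in \eqref{eq:G_convergence2}.
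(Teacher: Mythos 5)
Your proposal is correct and follows essentially the same route as the paper's proof: the paper likewise introduces $H(t,x,r,p,A)=-G(t,x,-r,-p,-A)$, verifies that $H$ is continuous and degenerate elliptic, shows that $-v$ is a viscosity subsolution of the $H$-equation, translates the hypotheses of \eqref{eq:G_convergence2} into those of \eqref{eq:G_convergence} via exactly the sign-flip dictionary you describe, and then applies Lemma~\ref{lem:vs3_3} with $k=2$, $u_1=u$, $u_2=-v$, $G_1=G$, $G_2=H$, finishing by continuity at $t=0$. The only cosmetic difference is bookkeeping: the paper keeps $\mathfrak r_n$ and $\mathfrak A_n$ unsigned in the intermediate display (so $r_n+\mathfrak r_n$ and $\langle z,A_nz\rangle+\langle\mathfrak z,\mathfrak A_n\mathfrak z\rangle$ appear) and absorbs the minus signs when expanding $H$, whereas you build the signs directly into the substitution $\mathfrak r_n\coloneqq -r_2^{(n)}$, $\mathfrak A_n\coloneqq -A_2^{(n)}$; both conventions verify the same termwise identities.
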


\begin{proof}[Proof of Corollary~\ref{cor:vs3_4}]
    Throughout this proof let 
    $H\colon (0,T)\times O\times\R\times\R^d
    \times\mathbb{S}_d\to \R$
    satisfy for all $t\in(0,T)$, $x\in O$,
    $r\in\R$, $p\in\R^d$, $A\in\mathbb{S}_d$
    that
    \begin{equation}\label{eq:H}
    H(t,x,r,p,A)=-G(t,x,-r,-p,-A).
    \end{equation}
    Note that the fact that $G$ is
    degnerate elliptic implies that $H$
    is degenerate elliptic.
    In addition, observe that \eqref{eq:H}
    and the assumption that 
    $G\in C((0,T)\times O\times\R\times\R^d\times\mathbb{S}_d,\R)$
    ensure that
    $H\in C((0,T)\times O\times\R\times\R^d\times\mathbb{S}_d,\R)$.
    Next note that 
    \eqref{eq:v_vs4}
    and \eqref{eq:H}
    assure that $-v$ is a viscosity solution of
    \begin{equation}
    (\tfrac{\partial(-v)}{\partial t})(t,x)
    +H(t,x,(-v)(t,x),(\nabla_x(-v))(t,x),
    (\operatorname{Hess}_x(-v))(t,x))
    \geq 0
    \end{equation}
    for $(t,x)\in(0,T)\times O$.
    Furthermore, observe that \eqref{eq:G_convergence2}
    shows that for all $t_n\in(0,T)$, $n\in\N_0$,
    all $(x_n,r_n,A_n)\in O\times\R\times\mathbb{S}_d$,
    $n\in\N_0$,
    and all $(\mathfrak{x}_n,\mathfrak{r}_n,
    \mathfrak{A}_n)\in O\times\R\times\mathbb{S}_d$,
    $n\in\N_0$, with
    $\limsup_{n\to\infty}[\abs{t_n-t_0}
    +\norm{x_n-x_0}]+\sqrt{n}\norm{x_n-\mathfrak{x}_n}
    =0<\liminf_{n\to\infty}(r_n+\mathfrak{r}_n)
    =\limsup_{n\to\infty}(r_n+\mathfrak{r}_n)
    \leq \sup_{n\in\N}(\abs{r_n}+\abs{\mathfrak{r}_n})
    <\infty$
    and $\forall(n\in\N,z,\mathfrak{z}\in\R^d)\colon
    -5(\norm{z}^2+\norm{\mathfrak{z}}^2)
    \leq \langle z, A_nz\rangle
    +\langle\mathfrak{z},\mathfrak{A}_n\mathfrak{z}\rangle
    \leq 5\norm{z-\mathfrak{z}}^2$ 
    it holds that
    \begin{equation}
    \begin{split}
    &\limsup_{n\to\infty}\Big[
    G(t_n,x_n, r_n,n(x_n-\mathfrak{x}_n),nA_n)
    +H(t_n,\mathfrak{x}_n,\mathfrak{r}_n,
    n(\mathfrak{x}_n-x_n),n\mathfrak{A}_n) \Big]\\
    &=\limsup_{n\to\infty}\Big[
    G(t_n,x_n, r_n,n(x_n-\mathfrak{x}_n),nA_n)
    -G(t_n,\mathfrak{x}_n,-\mathfrak{r}_n,
    n(x_n-\mathfrak{x}_n),-n\mathfrak{A}_n)\Big]\\
    &\leq 0.
    \end{split}
    \end{equation}
    Lemma~\ref{lem:vs3_3}
    (applied with 
    $k\curvearrowleft 2$,
    $u_1\curvearrowleft u$,
    $u_2\curvearrowleft -v$,
    $G_1\curvearrowleft G$,
    $G_2 \curvearrowleft H$
    in the notation of Lemma~\ref{lem:vs3_3})
    therefore demonstrates that
    for all $t\in (0,T]$, $x\in O$
    it holds that 
    $u(t,x)-v(t,x)\leq 0$.           
    Combining this with the assumption that
    $u,v\in C([0,T]\times O,\R)$ 
    proves
    that for all $t\in [0,T]$, $x\in O$
    it holds that  $u(t,x)-v(t,x)\leq 0$.
    The proof of Corollary~\ref{cor:vs3_4}
    is thus complete.    
\end{proof}

The following proposition
extends \cite[Proposition 3.5]{beck2021nonlinear}
to the case of semi-linear PDEs with gradient-dependent nonlinearities.

\begin{prop}
    \label{prop:vs3_5}
    Let $d,m\in\N$, $L,T\in (0,\infty)$,
    let $\langle\cdot,\cdot\rangle\colon
    \R^d\times\R^d\to\R$ be the
    standard Euclidean scalar product
    on $\R^d$,
    let $\norm{\cdot}\colon\R^d\to[0,\infty)$
    be the standard Euclidean norm on $\R^d$,
    let $\Vnorm{\cdot}\colon\R^{d+1}\to[0,\infty)$
    be the standard Euclidean norm
    on $\R^{d+1}$,
    let 
    $\norm{\cdot}_F \colon (\bigcup_{a,b=1}^\infty
    \R^{a\times b})\to [0,\infty)$
    satisfy for all $a,b\in\N$,
    $A=(A_{ij})_{(i,j)\in \{1,2,\ldots, a\}
        \times\{1,2,\ldots,b\}}\in\R^{a\times b}$ that
    $\norm{A}_F=[\sum_{i=1}^a
    \sum_{j=1}^b \abs{A_{ij}}^2
    ]^{\frac{1}{2}}$,
    let $O\subseteq \R^d$ be a non-empty
    open set,
    for every $r\in(0,\infty)$ let
    $K_r\subseteq[0,T)$, $O_r\subseteq O$
    satisfy $K_r=[0,\max\{T-\frac{1}{r},0\}]$
    and
    $O_r = \{  x \in O\colon \norm{x} \leq r \text{ and } \{ y \in \R^d\colon \norm{y-x} 
    < \frac{1}{r} \} \subseteq O \}$,
    let $g\in C(O,\R)$, 
    $f\in C([0,T]\times O\times\R\times\R^{d},\R)$,
    $\mu\in C([0,T]\times O,\R^d)$,
    $\sigma\in C([0,T]\times O, \R^{d\times m})$,
    $V\in C^{1,2}([0,T]\times O, (0,\infty))$
    satisfy 
    for all $r\in(0,\infty)$ that
    \begin{equation}
    \label{eq:mu_sigma_loclip2}
    \begin{split}
    &\sup\bigg(\bigg\{ \tfrac{\norm{\mu(t,x)-\mu(t,y)}+\norm{\sigma(t,x)-\sigma(t,y)}_F}{\norm{x-y}}\colon t\in [0,T], x,y\in O_r, x\neq y \bigg\}\cup \{0\} \bigg)
    <\infty,
    \end{split}
    \end{equation}
    assume for all $t\in [0,T]$, $x\in O$, 
    $a,b\in\R$, $v,w\in\R^{d}$ that
    $\abs{f(t,x,a,v)-f(t,x,b,w)}
    \leq L \Vnorm{ (a,v)
        -(b,w) }$,
    $\limsup_{r\to\infty}[\sup_{s\in[0,T)\setminus K_r}
    \sup_{y\in O\setminus O_r}
    \frac{\abs{f(s,y,0,0)}}{V(s,y)}]=0$, and
    \begin{equation}
    \label{eq:V_vissol}
    \begin{split}
    &(\tfrac{\partial V}{\partial t})(t,x)
    +\langle \mu(t,x), (\nabla_x V)(t,x) \rangle
    +\tfrac{1}{2} \operatorname{Tr}(\sigma(t,x)[\sigma(t,x)]^* (\operatorname{Hess}_x V)(t,x))
    \\
    &\qquad
    +L\norm{(\nabla_x V)(t,x)}
    \leq 0,
    \end{split}
    \end{equation}
    and let $u_1, u_2\in 
    C([0,T]\times O, \R)$
    satisfy for all $i\in \{1,2\}$
    that
    $\limsup_{r\to\infty}
    [\sup_{t\in[0,T)\setminus K_r}$
    $\sup_{x\in O\setminus O_r}$
    $(\frac{\abs{u_i(t,x)}}{V(t,x)}\sqrt{T-t})]=0$
    and that
    $u_i$ is a viscosity solution of
    \begin{equation}
    \label{eq:u_i_vissol}
    \begin{split}
    &(\tfrac{\partial u_i}{\partial t})(t,x)
    +\langle \mu(t,x), (\nabla_x u_i)(t,x)\rangle
    +\tfrac{1}{2} \operatorname{Tr}(\sigma(t,x)[\sigma(t,x)]^* (\operatorname{Hess}_x u_i)(t,x))
    \\
    &\qquad 
    +f(t,x,u_i(t,x),(\nabla_x u_i)(t,x))
    = 0
    \end{split}
    \end{equation}
    with $u_i(T,x)=g(x)$ for 
    $(t,x)\in (0,T)\times O$.
    Then it holds for all $t\in[0,T]$, $x\in O$
    that $u_1(t,x)=u_2(t,x)$.
\end{prop}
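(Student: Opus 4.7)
The plan is to prove $u_1 \leq u_2$ via Corollary~\ref{cor:vs3_4}; the reverse inequality follows by interchanging $u_1$ and $u_2$. Two preparatory steps are needed before the corollary applies: a change of variables that produces strict monotonicity in the zeroth-order argument, and a Lyapunov perturbation that absorbs the Lipschitz dependence of $f$ on the gradient. Fix $\lambda \in (L,\infty)$, $K \in (L,\infty)$, $\varepsilon \in (0,\infty)$, and set $\tilde{u}_i(t,x) := e^{-\lambda(T-t)} u_i(t,x)$, $\psi(t,x) := \varepsilon e^{K(T-t)} V(t,x)$, $\hat{u}_2 := \tilde{u}_2 + \psi$. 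A direct calculation shows that each $\tilde{u}_i$ is a viscosity solution of
\begin{equation*}
(\tfrac{\partial \tilde{u}_i}{\partial t})(t,x) + \tilde{G}(t,x,\tilde{u}_i(t,x), (\nabla_x \tilde{u}_i)(t,x), (\operatorname{Hess}_x \tilde{u}_i)(t,x)) = 0
\end{equation*}
where $\tilde{G}(t,x,r,p,A) := \langle \mu(t,x),p\rangle + \tfrac{1}{2}\operatorname{Tr}(\sigma(t,x)[\sigma(t,x)]^* A) - \lambda r + \tilde{f}(t,x,r,p)$ and $\tilde{f}(t,x,r,p) := e^{-\lambda(T-t)} f(t,x,e^{\lambda(T-t)} r, e^{\lambda(T-t)} p)$ remains $L$-Lipschitz in $(r,p)$. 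The $-\lambda r$ term combined with this Lipschitz property yields $\tilde{G}(t,x,r,p,A) - \tilde{G}(t,x,s,p,A) \leq -(\lambda - L)(r - s)$ for $r > s$, which is the strict zeroth-order monotonicity required in~\eqref{eq:G_convergence2}.

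Assumption~\eqref{eq:V_vissol} rearranges to $(\tfrac{\partial \psi}{\partial t})(t,x) + \langle \mu(t,x),(\nabla_x \psi)(t,x)\rangle + \tfrac{1}{2}\operatorname{Tr}(\sigma(t,x)[\sigma(t,x)]^*(\operatorname{Hess}_x \psi)(t,x)) \leq -K \psi(t,x) - L \norm{(\nabla_x \psi)(t,x)}$. A standard test-function calculation combining this with the $L$-Lipschitz bound on $\tilde{f}$ in $(r,p)$ yields that $\hat{u}_2$ is a strict viscosity supersolution of the transformed PDE with slack at least $(K+\lambda-L)\psi$. Equivalently, $\tilde{u}_1$ and $\hat{u}_2$ are, respectively, a viscosity subsolution and supersolution of the perturbed PDE with nonlinearity $\tilde{G}(t,x,r,p,A) + (K+\lambda-L)\psi(t,x)$. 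I would then apply Corollary~\ref{cor:vs3_4} to this perturbed PDE: the terminal comparison $\tilde{u}_1(T,x) = g(x) = \tilde{u}_2(T,x) < \hat{u}_2(T,x)$ is immediate from $\psi(T,x) = \varepsilon V(T,x) > 0$; the boundary-growth condition follows from the hypothesis $\limsup_{r\to\infty}\sup_{t\in[0,T)\setminus K_r}\sup_{x\in O\setminus O_r}\frac{|u_i(t,x)|}{V(t,x)}\sqrt{T-t}=0$ together with the fact that the subtracted $\psi\sqrt{T-t}$ is a positive multiple of $V\sqrt{T-t}$ that eventually dominates; and condition \eqref{eq:G_convergence2} decomposes so that the $\mu$- and $\sigma$-contributions are bounded by $Cn\|x_n - \mathfrak{x}_n\|^2 \to 0$ (using~\eqref{eq:mu_sigma_loclip2} and the matrix inequality applied to the columns of $\sigma^*$), the zeroth-order piece gives $-(\lambda - L)(r_n - \mathfrak{r}_n) < 0$ plus an $x$-continuity error of $\tilde{f}$, and the continuous $\psi$-term contributes $o(1)$. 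The corollary then delivers $\tilde{u}_1 \leq \hat{u}_2$, i.e., $u_1(t,x) \leq u_2(t,x) + \varepsilon e^{(\lambda + K)(T-t)} V(t,x)$; letting $\varepsilon \downarrow 0$ gives $u_1 \leq u_2$.

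I expect the main technical obstacle to be the $x$-continuity error in the verification of \eqref{eq:G_convergence2}: since $\sqrt{n}\|x_n - \mathfrak{x}_n\|\to 0$ only ensures $\|n(x_n - \mathfrak{x}_n)\| = o(\sqrt{n})$ rather than boundedness, mere joint continuity of $f$ is not obviously sufficient to yield $|\tilde{f}(t_n,x_n,\mathfrak{r}_n,n(x_n - \mathfrak{x}_n)) - \tilde{f}(t_n,\mathfrak{x}_n,\mathfrak{r}_n,n(x_n - \mathfrak{x}_n))| \to 0$ when the gradient argument is unbounded. The likely remedy is an approximation argument: replace $f$ by a $V$-weighted smoothing $f_k$ that is locally Lipschitz in $x$ (for which $\|x_n - \mathfrak{x}_n\| \cdot \|n(x_n - \mathfrak{x}_n)\| = n\|x_n - \mathfrak{x}_n\|^2 \to 0$ settles the error cleanly), apply the comparison for each $f_k$, and pass to the limit $k \to \infty$ using the Lyapunov structure to preserve the growth hypotheses on the $u_i$'s.
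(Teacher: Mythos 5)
Your plan is genuinely different from the paper's: you work with an additive Lyapunov perturbation $\hat u_2 = \tilde u_2 + \psi$ and send $\varepsilon\downarrow 0$, whereas the paper passes to the \emph{multiplicative} rescaling $v_i := u_i/\V$ with $\V(t,x)=e^{-Lt}V(t,x)$, verifies that the $v_i$ solve a transformed PDE (via \cite[Lemma 4.12]{HairerHutzenthalerJentzen2015}), and applies Corollary~\ref{cor:vs3_4} to $v_1$, $v_2$ directly, with no $\varepsilon$. The internal algebra of your scheme (the $-\lambda r$ term creating strict monotonicity, the rearrangement of \eqref{eq:V_vissol}, the strict-supersolution computation for $\hat u_2$, the terminal comparison) is all sound.

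However, there is a genuine gap at the boundary-growth verification, and the claim that ``$\psi\sqrt{T-t}$ eventually dominates'' is where it breaks. Corollary~\ref{cor:vs3_4} requires $\inf_{r}\bigl[\sup_{t\in(0,T)\setminus K_r}\sup_{x\in O\setminus O_r}\bigl[(\tilde u_1-\hat u_2)(t,x)\sqrt{T-t}\bigr]\bigr]\le 0$. Writing $\delta(r):=\sup_{t\in[0,T)\setminus K_r}\sup_{x\in O\setminus O_r}\bigl[(|u_1|+|u_2|)(t,x)/V(t,x)\cdot\sqrt{T-t}\bigr]\to 0$, the best available bound is
\begin{equation*}
(\tilde u_1-\hat u_2)(t,x)\sqrt{T-t}
\;\leq\;\bigl[\delta(r)\,e^{-\lambda(T-t)}-\varepsilon\, e^{K(T-t)}\sqrt{T-t}\bigr]\,V(t,x).
\end{equation*}
Because $[0,T)\setminus K_r=(\max\{T-\tfrac1r,0\},T)$, the surviving factor $\sqrt{T-t}$ is at most $r^{-1/2}$ and in fact ranges down to $0$; for $t$ close enough to $T$ the bracket is $\geq \tfrac12\delta(r)>0$ (unless $\delta(r)=0$), and $V$ is in general unbounded on $O\setminus O_r$. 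So the supremum can be $+\infty$ for every $r$, and the infimum is not controlled. Your perturbation carries the ``wrong'' shape: $\psi\sqrt{T-t}$ vanishes precisely on the boundary region where you need it to dominate, because of the extra $\sqrt{T-t}$. The paper's division by $\V$ is exactly what kills this: $|v_1-v_2|\sqrt{T-t}=e^{Lt}|u_1-u_2|\sqrt{T-t}/V\leq e^{LT}\delta(r)\to 0$, so the boundary condition is immediate with no $\varepsilon$ and no competition against $\sqrt{T-t}$.

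A secondary remark on the obstacle you yourself flagged in \eqref{eq:G_convergence2}: your proposed remedy (smoothing $f$ into an $x$-Lipschitz $f_k$ and passing $k\to\infty$) is not straightforward as stated, because $u_1$, $u_2$ are viscosity solutions for the original $f$, not for $f_k$; applying a comparison principle for $f_k$ to $u_1$, $u_2$ would require also replacing them by solutions of the $f_k$-PDE and then running a separate stability argument. The paper sidesteps this by leaving $f$ alone and making the transformation at the level of $u_i$ and $G$. If you want to salvage your route, the essential fix is to replace the additive perturbation of $u_2$ by the $V$-weighted quantity $u_i/(\text{const}\cdot V)$, which is in substance the paper's move.
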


\begin{proof}[Proof of Proposition~\ref{prop:vs3_5}]
    Throughout this proof 
    let $\V\colon [0,T]\times O\to (0,\infty)$
    satisfy for all $t\in [0,T]$, $x\in O$ that
    $\V(t,x)=e^{-Lt}V(t,x)$,
    let $v_i\colon [0,T]\times O\to \R$,
    $i\in\{1,2\}$,
    satisfy for all $i\in \{1,2\}$,
    $t\in [0,T]$, $x\in O$ that
    $v_i(t,x)= \frac{u_i(t,x)}{\V(t,x)}$,
    let $G\colon (0,T)\times O\times \R\times \R^d\times \mathbb{S}_d \to \R$ satisfy 
    for all $t\in(0,T)$, $x\in O$,
    $r\in\R$, $p\in\R^d$, $A\in\mathbb{S}_d$
    that
    \begin{equation}
    \label{eq:G}
    G(t,x,r,p,A)
    =\langle \mu(t,x), p\rangle
    +\tfrac{1}{2}\operatorname{Tr}(\sigma(t,x)[\sigma(t,x)]^*A)
    +f(t,x,r,p),
    \end{equation}    
    and let $H\colon(0,T)\times O\times\R\times\R^d\times \mathbb{S}_d\to\R$
    satisfy for all $t\in(0,T)$, $x\in O$,
    $r\in\R$, $p\in\R^d$, $A\in\mathbb{S}_d$
    that
    \begin{equation}
    \label{eq:H2}
    \begin{split}
    &H(t,x,r,p,A)
    = \tfrac{r}{\V(t,x)}(\tfrac{\partial}{\partial t}\V)(t,x)\\
    &\qquad
    +\tfrac{1}{\V(t,x)}G\Big(t,x,r\V(t,x),\V(t,x)p+r(\nabla_x \V)(t,x),\\
    &\qquad\quad 
    \V(t,x)A+p[(\nabla_x \V)(t,x)]^*+(\nabla_x \V)(t,x)p^*+r(\operatorname{Hess}_x \V)(t,x)\Big).
    \end{split}
    \end{equation}
    Observe that 
    \eqref{eq:V_vissol}
    and the assumption that
    $V\in C^{1,2}([0,T]\times O, (0,\infty))$
    ensure that for all $t\in[0,T]$,
    $x\in O$ it holds that
    $\V\in C^{1,2}([0,T]\times O, (0,\infty))$
    and
    \begin{equation}
    \label{eq:dubbelV_prop}
    \begin{split}
    &(\tfrac{\partial}{\partial t}\V)(t,x)
    +\langle \mu(t,x), (\nabla_x \V)(t,x)\rangle
    +\tfrac{1}{2}\operatorname{Tr}(\sigma(t,x)[\sigma(t,x)]^*(\operatorname{Hess}_x \V)(t,x))
    \\
    &\qquad +L\V(t,x)
    +L \norm{(\nabla_x\V)(t,x)}
    \leq 0.
    \end{split}
    \end{equation}
    Next note that 
    \eqref{eq:G}
    implies that $G\in C((0,T)\times O\times\R\times\R^d\times\mathbb{S}_d,\R)$
    is degenerate elliptic.
    Combining this with \eqref{eq:H2}
    shows that $H\in C((0,T)\times O\times \R\times \R^d\times \mathbb{S}_d,\R)$
    is degenerate elliptic.
    In the next step observe that
    the assumption that for all 
    $i\in\{1,2\}$, $x\in O$ it holds that
    $u_i(T,x)=g(x)$ implies that
    for all $x\in O$
    it holds that
    \begin{equation}
    \label{eq:v_1_v_2_ineq}
    v_1(T,x)\leq v_2(T,x) \leq v_1(T,x).
    \end{equation}
    Furthermore, note that the hypothesis
    that 
    $\limsup_{r\to\infty}
    [\sup_{t\in[0,T)\setminus K_r}
    \sup_{x\in O\setminus O_r}$\\
    $(\frac{\abs{u_1(t,x)}+\abs{u_2(t,x)}}{V(t,x)}
    \sqrt{T-t})]=0$
    shows that
    \begin{equation}
    \label{eq:v_1_v_2_conv}
    \limsup_{r\to\infty}
    \bigg[\sup_{t\in[0,T)\setminus K_r}
    \sup_{x\in O\setminus O_r}
    \Big(\abs{v_1(t,x)-v_2(t,x)}\sqrt{T-t}\Big)\bigg]
    =0.
    \end{equation}
    In addition, observe that
    \cite[Lemma 4.12]{HairerHutzenthalerJentzen2015}
    (applied for every $i\in \{1,2\}$ with
    $\tilde{G}\curvearrowleft -H$,
    $V\curvearrowleft \V$,
    $\tilde{u}\curvearrowleft v_i$
    in the notation of \cite[Lemma 4.12]{HairerHutzenthalerJentzen2015}),
    \eqref{eq:u_i_vissol},
    and \eqref{eq:H2}
    demonstrate that
    for all $i\in\{1,2\}$
    it holds that $v_i$
    is a viscosity solution of
    \begin{equation}
    (\tfrac{\partial}{\partial t}v_i)(t,x)
    +H(t,x,v_i(t,x),(\nabla_x v_i)(t,x),
    (\operatorname{Hess}_x v_i)(t,x))
    =0
    \end{equation}
    for $(t,x)\in (0,T)\times O$.
    Throughout the rest of the proof let
    $\textbf{e}_1, \textbf{e}_2, \ldots,
    \textbf{e}_m\in\R^m$ satisfy 
    $\textbf{e}_1=(1,0,\ldots,0)$,
    $\textbf{e}_2=(0,1,0,\ldots, 0)$,
    $\ldots$, $\textbf{e}_m=(0,\ldots,0,1)$,
    let $t_n\in(0,T)$, $n\in\N_0$, satisfy
    $\limsup_{n\to\infty}\abs{t_n-t_0}=0$,
    and let $(x_n,r_n,A_n)\in O\times \R\times\mathbb{S}_d$, $n\in\N_0$, and
    $(\mathfrak{x}_n, \mathfrak{r}_n,
    \mathfrak{A}_n)\in O\times\R\times
    \mathbb{S}_d$, $n\in\N_0$, satisfy
    $\limsup_{n\to\infty}[\abs{t_n-t_0}
    +\norm{x_n-x_0}+\sqrt{n}
    \norm{x_n-\mathfrak{x}_n}]=0
    < r_0 = \liminf_{n\to\infty} \abs{r_n-\mathfrak{r}_n}
    = \limsup_{n\to\infty}\abs{r_n-\mathfrak{r}_n}
    \leq \sup_{n\in\N} (\abs{r_n}
    +\abs{\mathfrak{r}_n}) < \infty$
    and for all $n\in\N$, $y,z\in\R^d$
    that
    $\langle y, A_n y\rangle
    -\langle z, \mathfrak{A}_n z \rangle
    \leq 5 \norm{y-z}^2$.
    Note that \eqref{eq:mu_sigma_loclip2}
    and the fact that
    $\limsup_{n\to\infty}[\sqrt{n}\norm{x_n-\mathfrak{x}_n}]=0$
    ensure that
    \begin{equation}
    \limsup_{n\to\infty}\Big[n\norm{\sigma(t_n,x_n)
        -\sigma(t_n,\mathfrak{x}_n)}_F^2\Big]
    =0.
    \end{equation}
    The fact that for all $B\in \mathbb{S}_d$,
    $C\in\R^{d\times m}$
    it holds that
    $\operatorname{Tr}(CC^*B)
    =\sum_{i=1}^m \langle C e_i, BC e_i\rangle$
    and
    the assumption that for all 
    $n\in\N$, $y,z\in\R^d$
    it holds that
    $\langle y, A_n y \rangle
    - \langle z, \mathfrak{A}_n z \rangle
    \leq 5 \norm{y-z}^2$
    therefore imply that
    \begin{equation}
    \label{eq:sigmasigma_conv1}
    \begin{split}
    &\limsup_{n\to\infty} \Big[\tfrac{1}{2}
    \operatorname{Tr}\Big(\tfrac{\sigma(t_n,x_n)[\sigma(t_n,x_n)]^*}{\V(t_n,x_n)}\V(t_n,x_n)nA_n
    -\tfrac{\sigma(t_n,\mathfrak{x}_n)[\sigma(t_n,\mathfrak{x}_n)]^*}{\V(t_n,\mathfrak{x}_n)}\V(t_n,\mathfrak{x}_n)n
    \mathfrak{A}_n\Big) \Big]\\
    &=\limsup_{n\to\infty} \Big[\tfrac{n}{2}
    \operatorname{Tr}\Big(\sigma(t_n,x_n)[\sigma(t_n,x_n)]^* A_n
    - \sigma(t_n,\mathfrak{x}_n)[\sigma(t_n,\mathfrak{x}_n)]^*
    \mathfrak{A}_n\Big) \Big]\\
    &=\limsup_{n\to\infty} \Big[\tfrac{n}{2}
    \sum_{i=1}^m \big( \langle \sigma(t_n,x_n)e_i,
    A_n \sigma(t_n,x_n) e_i\rangle\
    - \langle \sigma(t_n,\mathfrak{x}_n)e_i,
    \mathfrak{A}_n\sigma(t_n,\mathfrak{x}_n)e_i\rangle\big) \Big]\\
    &\leq \limsup_{n\to\infty} \bigg[ \sum_{i=1}^m
    \tfrac{5n}{2} \norm{\sigma(t_n,x_n)e_i
        -\sigma(t_n,\mathfrak{x}_n)e_i}^2\bigg]\\
    &= \tfrac{5}{2}\limsup_{n\to\infty} \big[ 
    n \norm{\sigma(t_n,x_n)
        -\sigma(t_n,\mathfrak{x}_n)}_F^2\big]
    =0.
    \end{split}
    \end{equation}
    Furthermore, note that
    \eqref{eq:mu_sigma_loclip2}
    and the fact that 
    $\V\in C^{1,2}([0,T]\times O,(0,\infty))$  
    show that
    for all compact 
    $\mathcal{K}\subseteq O$
    there exists $c\in\R$ which satisfies
    for all $s\in[0,T]$, 
    $y_1,y_2\in\mathcal{K}$ that
    \begin{equation}
    \begin{split}
    &\normmm{\tfrac{\sigma(s,y_1)[\sigma(s,y_1)]^*}{\V(s,y_1)}
        -\tfrac{\sigma(s,y_2)[\sigma(s,y_2)]^*}{\V(s,y_2)}}_{F}
    +\norm{(\nabla_x \V)(s,y_1)-(\nabla_x\V)(s,y_2)}\\
    &\leq c \norm{y_1-y_2}.
    \end{split}
    \end{equation}
    This, 
    the fact that
    $\limsup_{n\to\infty}[\abs{t_n-t_0}
    +\norm{x_n-x_0}]=0$, and 
    the assumption that
    $\limsup_{n\to\infty}[\sqrt{n}\norm{x_n-\mathfrak{x}_n}]=0$
    demonstrate that
    \begin{equation}
    \label{eq:sigmasigma_conv2a}
    \limsup_{n\to\infty} \bigg[
    n\norm{x-\mathfrak{x}_n} \normmm{\frac{\sigma(t_n,x_n)[\sigma(t_n,x_n)]^*}{\V(t_n,x_n)}
        -\frac{\sigma(t_n,\mathfrak{x}_n)[\sigma(t_n,\mathfrak{x}_n)]^*}{\V(t_n,\mathfrak{x}_n)}}_{F} \bigg]
    =0
    \end{equation}
    and
    \begin{equation}
    \label{eq:sigmasigma_conv2b}
    \limsup_{n\to\infty} [n\norm{x_n-\mathfrak{x}_n} \, \norm{(\nabla_x\V)(t_n,x_n)
        -(\nabla_x\V)(t_n, \mathfrak{x}_n)}]
    =0.
    \end{equation}
    In addition, note that for all 
    $B\in\mathbb{S}_d$, $v,w\in\R^d$
    it holds that
    \begin{equation}
    \begin{split}
    &\operatorname{Tr}(Bvw^*)
    =\operatorname{Tr}(w^*Bv)
    = w^*Bv 
    =\langle w, Bv \rangle
    =\langle Bw, v \rangle\\
    &=\langle v, Bw \rangle
    = v^* Bw
    = \operatorname{Tr}(v^*Bw)
    = \operatorname{Tr}(Bwv^*).
    \end{split}
    \end{equation}
    This, 
    Cauchy-Schwarz inequality,
    \eqref{eq:sigmasigma_conv2a},
    and \eqref{eq:sigmasigma_conv2b}
    demonstrate that
    \begin{equation}
    \label{eq:sigmasigma_conv3}
    \begin{split}
    &\limsup_{n\to\infty} \bigg[
    \tfrac{1}{2} \operatorname{Tr}\bigg(
    \tfrac{\sigma(t_n,x_n)[\sigma(t_n,x_n)]^*}{\V(t_n,x_n)}
    \Big(n(x_n-\mathfrak{x}_n)[(\nabla_x\V)(t_n,x_n)]^*\\
    &\qquad
    +(\nabla_x\V)(t_n,x_n)n(x_n-\mathfrak{x}_n)^*\Big)\\
    &\qquad - \tfrac{\sigma(t_n,\mathfrak{x}_n)[\sigma(t_n,\mathfrak{x}_n)]^*}{\V(t_n,\mathfrak{x}_n)}
    \Big(n(x_n-\mathfrak{x}_n)[(\nabla_x\V)(t_n,\mathfrak{x}_n)]^*\\
    &\qquad
    +(\nabla_x\V)(t_n,\mathfrak{x}_n)n(x_n-\mathfrak{x}_n)^*\Big)\bigg) \bigg]\\
    &=\limsup_{n\to\infty} \bigg[
    \bigg\langle \tfrac{\sigma(t_n,x_n)[\sigma(t_n,x_n)]^*}{\V(t_n,x_n)}
    n(x_n-\mathfrak{x}_n), (\nabla_x\V)(t_n,x_n)
    \bigg\rangle
    \\
    &\qquad - \bigg\langle\tfrac{\sigma(t_n,\mathfrak{x}_n)[\sigma(t_n,\mathfrak{x}_n)]^*}{\V(t_n,\mathfrak{x}_n)}
    n(x_n-\mathfrak{x}_n),
    (\nabla_x\V)(t_n,\mathfrak{x}_n)
    \bigg\rangle
    \bigg]\\
    &= \limsup_{n\to\infty} \bigg[
    \bigg\langle 
    \bigg(\tfrac{\sigma(t_n,x_n)[\sigma(t_n,x_n)]^*}{\V(t_n,x_n)}
    -\tfrac{\sigma(t_n,\mathfrak{x}_n)[\sigma(t_n,\mathfrak{x}_n)]^*}{\V(t_n,\mathfrak{x}_n)}\bigg)
    n(x_n-\mathfrak{x}_n), 
    (\nabla_x\V)(t_n,x_n)
    \bigg\rangle
    \\
    &\qquad + \bigg\langle\tfrac{\sigma(t_n,\mathfrak{x}_n)[\sigma(t_n,\mathfrak{x}_n)]^*}{\V(t_n,\mathfrak{x}_n)}
    n(x_n-\mathfrak{x}_n),
    (\nabla_x\V)(t_n,x_n)-(\nabla_x\V)(t_n,\mathfrak{x}_n)
    \bigg\rangle
    \bigg]\\
    &\leq \limsup_{n\to\infty} \bigg[
    \normmm{
        \tfrac{\sigma(t_n,x_n)[\sigma(t_n,x_n)]^*}{\V(t_n,x_n)}
        -\tfrac{\sigma(t_n,\mathfrak{x}_n)[\sigma(t_n,\mathfrak{x}_n)]^*}{\V(t_n,\mathfrak{x}_n)}}_{F}
    n\norm{x_n-\mathfrak{x}_n}
    \,\norm{(\nabla_x\V)(t_n,x_n)}
    \\
    &\qquad + \normmm{\tfrac{\sigma(t_n,\mathfrak{x}_n)[\sigma(t_n,\mathfrak{x}_n)]^*}{\V(t_n,\mathfrak{x}_n)}}_{F}\,
    n\norm{x_n-\mathfrak{x}_n}\,
    \norm{(\nabla_x\V)(t_n,x_n)-(\nabla_x\V)(t_n,\mathfrak{x}_n)}
    \bigg]\\
    &=0.
    \end{split}
    \end{equation}
    Next observe that
    the fact that 
    $(0,T)\times O \ni (s,y)\mapsto
    \frac{\sigma(s,y)[\sigma(s,y)]^*}{\V(s,y)}
    (\operatorname{Hess}_x\!\V)(s,y)$
    $\in\R^{d\times d}$
    is continuous 
    and the assumption that
    $\limsup_{n\to\infty}[\abs{t_n-t_0}
    +\norm{x_n-x_0}]=0$ and 
    $\limsup_{n\to\infty}[\sqrt{n}\norm{x_n-\mathfrak{x}_n}]=0$
    show that
    \begin{equation}
    \begin{split}
    &\limsup_{n\to\infty} \Big\lvert
    \operatorname{Tr}\Big(
    \tfrac{\sigma(t_n,\mathfrak{x}_n)[\sigma(t_n,\mathfrak{x}_n)]^*}{\V(t_n,\mathfrak{x}_n)}
    (\operatorname{Hess}_x\!\V)(t_n,\mathfrak{x}_n)\\
    &\qquad
    -\tfrac{\sigma(t_0,x_0)[\sigma(t_0,x_0)]^*}{\V(t_0,x_0)}
    (\operatorname{Hess}_x\!\V)(t_0,x_0)\Big)
    \Big\rvert\\
    &= \limsup_{n\to\infty} \Big\lvert
    \operatorname{Tr}\Big(
    \tfrac{\sigma(t_n,x_n)[\sigma(t_n,x_n)]^*}{\V(t_n,x_n)}
    (\operatorname{Hess}_x\!\V)(t_n,x_n)\\
    &\qquad
    -\tfrac{\sigma(t_0,x_0)[\sigma(t_0,x_0)]^*}{\V(t_0,x_0)}
    (\operatorname{Hess}_x\!\V)(t_0,x_0)\Big)
    \Big\rvert
    =0.
    \end{split}
    \end{equation}
    The fact that 
    $0< r_0 
    = \liminf_{n\to\infty} \abs{r_n-\mathfrak{r}_n}
    = \limsup_{n\to\infty}\abs{r_n-\mathfrak{r}_n}
    \leq \sup_{n\in\N} (\abs{r_n}
    +\abs{\mathfrak{r}_n}) < \infty$
    therefore ensures that 
    \begin{equation}
    \begin{split}
    &\limsup_{n\to\infty}\bigg[
    \tfrac{1}{2}\operatorname{Tr}\bigg(
    \tfrac{\sigma(t_n,x_n)[\sigma(t_n,x_n)]^*}{\V(t_n,x_n)}r_n (\operatorname{Hess}_x\!\V)(t_n,x_n)\\
    &\qquad
    -\tfrac{\sigma(t_n,\mathfrak{x}_n)[\sigma(t_n,\mathfrak{x}_n)]^*}{\V(t_n,\mathfrak{x}_n)}
    \mathfrak{r}_n (\operatorname{Hess}_x\!\V)(t_n,\mathfrak{x}_n)
    \bigg)\bigg]\\
    &=\tfrac{1}{2} \limsup_{n\to\infty}
    \bigg[ (r_n-\mathfrak{r}_n)
    \operatorname{Tr}\bigg(
    \tfrac{\sigma(t_n,x_n)[\sigma(t_n,x_n)]^*}{\V(t_n,x_n)} (\operatorname{Hess}_x\!\V)(t_n,x_n)\bigg)\\
    &\qquad +\mathfrak{r_n} 
    \operatorname{Tr}\bigg(
    \tfrac{\sigma(t_n,x_n)[\sigma(t_n,x_n)]^*}{\V(t_n,x_n)} (\operatorname{Hess}_x\!\V)(t_n,x_n)\\
    &\qquad
    -\tfrac{\sigma(t_n,\mathfrak{x}_n)[\sigma(t_n,\mathfrak{x}_n)]^*}{\V(t_n,\mathfrak{x}_n)}
    (\operatorname{Hess}_x\!\V)(t_n,\mathfrak{x}_n)
    \bigg) \bigg]\\
    &= \tfrac{r_0}{2\V(t_0,x_0)}
    \operatorname{Tr}\big(\sigma(t_0,x_0)
    [\sigma(t_0,x_0)]^* (\operatorname{Hess}_x\!\V)(t_0,x_0)\big).
    \end{split}
    \end{equation}
    Combining this with
    \eqref{eq:sigmasigma_conv1}
    and \eqref{eq:sigmasigma_conv3}
    ensures that
    \begin{equation}
    \label{eq:sigmasigma_conv_final}
    \begin{split}
    &\limsup_{n\to\infty}\bigg[
    \tfrac{1}{2}\operatorname{Tr}\bigg(
    \tfrac{\sigma(t_n,x_n)[\sigma(t_n,x_n)]^*}{\V(t_n,x_n)}
    \Big(\V(t_n,x_n)n A_n 
    +n(x_n-\mathfrak{x}_n)
    [(\nabla_x\V)(t_n,x_n)]^*\Big)\\
    &\qquad +(\nabla_x\V)(t_n,x_n)
    n(x_n-\mathfrak{x}_n)^*
    +r_n(\operatorname{Hess}_x\!\V)(t_n,x_n)\bigg)\\
    &\qquad -\tfrac{1}{2}\operatorname{Tr}\bigg(
    \tfrac{\sigma(t_n,\mathfrak{x}_n)[\sigma(t_n,\mathfrak{x}_n)]^*}{\V(t_n,\mathfrak{x}_n)}
    \Big(\V(t_n,\mathfrak{x}_n)
    n \mathfrak{A}_n 
    +n(x_n-\mathfrak{x}_n)
    [(\nabla_x\V)(t_n,\mathfrak{x}_n)]^*\Big)\\
    &\qquad +(\nabla_x\V)(t_n,\mathfrak{x}_n)
    n(x_n-\mathfrak{x}_n)^*
    +\mathfrak{r}_n
    (\operatorname{Hess}_x\!\V)(t_n,\mathfrak{x}_n)\bigg)
    \bigg]\\
    &\leq \tfrac{r_0}{2\V(t_0,x_0)}
    \operatorname{Tr}\big( \sigma(t_0,x_0)
    [\sigma(t_0,x_0)]^*
    (\operatorname{Hess}_x\!\V)(t_0,x_0)\big).
    \end{split}
    \end{equation}
    Next note that 
    the fact that $(0,T)\times O\ni(s,y)
    \mapsto \frac{1}{\V(s,y)}(\frac{\partial}{\partial t}\V)(s,y)\in\R$
    is continuous
    and the assumption that 
    $0<r_0
    =\liminf_{n\to\infty} (r_n-\mathfrak{r}_n)
    =\limsup_{n\to\infty} (r_n-\mathfrak{r}_n)
    \leq \sup_{n\in\N}(\abs{r_n}
    +\abs{\mathfrak{r}_n})<\infty$
    prove that
    \begin{equation}
    \label{eq:V_timeder_conv}
    \begin{split}
    &\limsup_{n\to\infty} \bigg[
    \tfrac{r_n}{\V(t_n,x_n)}
    (\tfrac{\partial}{\partial t}\V)(t_n,x_n)
    - \tfrac{\mathfrak{r}_n}{\V(t_n,\mathfrak{x}_n)}
    (\tfrac{\partial}{\partial t}\V)(t_n,\mathfrak{x}_n)\bigg]\\
    &=\limsup_{n\to\infty} \bigg[
    \tfrac{r_n-\mathfrak{r}_n}{\V(t_n,x_n)}
    (\tfrac{\partial}{\partial t}\V)(t_n,x_n)\\
    &\qquad
    +\mathfrak{r}_n \bigg( 
    \tfrac{1}{\V(t_n,x_n)}
    (\tfrac{\partial}{\partial t}\V)(t_n,x_n)
    - \tfrac{1}{\V(t_n,\mathfrak{x}_n)}
    (\tfrac{\partial}{\partial t}\V)(t_n,\mathfrak{x}_n)\bigg)
    \bigg]\\
    &=\tfrac{r_0}{\V(t_0,x_0)}(\tfrac{\partial}{\partial t}\V)(t_0,x_0).
    \end{split}
    \end{equation} 
    Furthermore, observe that
    \eqref{eq:mu_sigma_loclip2} and 
    the fact that
    $\limsup_{n\to\infty}[\abs{t_n-t_0}
    +\norm{x_n-x_0}]=0$ 
    $=\limsup_{n\to\infty}[\sqrt{n}\norm{x_n-\mathfrak{x}_n}]$
    ensure that 
    \begin{equation}
    \limsup_{n\to\infty} \Big[ 
    n\norm{\mu(t_n,x_n)-\mu(t_n,\mathfrak{x}_n)}\,\norm{x_n-\mathfrak{x}_n}\Big]
    =0.
    \end{equation}
    This,
    the Cauchy-Schwarz inequality,
    the fact that
    $(0,T)\times O\ni (s,y)\mapsto
    \langle \frac{\mu(s,y)}{\V(s,y)},
    (\nabla_x\V)(s,y) \rangle\in\R$
    is continuous, 
    and 
    the assumption that
    $0<r_0
    =\liminf_{n\to\infty} (r_n-\mathfrak{r}_n)
    =\limsup_{n\to\infty} (r_n-\mathfrak{r}_n)
    \leq \sup_{n\in\N}(\abs{r_n}
    +\abs{\mathfrak{r}_n})<\infty$
    imply that
    \begin{equation}
    \label{eq:mu_conv}
    \begin{split}
    &\limsup_{n\to\infty}\bigg[
    \frac{1}{\V(t_n,x_n)}\langle \mu(t_n,x_n),
    \V(t_n,x_n)n(x_n-\mathfrak{x}_n)
    +r_n(\nabla_x\V)(t_n,x_n) \rangle\\
    &\qquad
    -\frac{1}{\V(t_n,\mathfrak{x}_n)}\langle \mu(t_n,\mathfrak{x}_n),
    \V(t_n,\mathfrak{x}_n)n(x_n-\mathfrak{x}_n)
    +\mathfrak{r}_n(\nabla_x\V)(t_n,\mathfrak{x}_n) \rangle\bigg]\\
    &= \limsup_{n\to\infty}\bigg[
    \bigg\langle \mu(t_n,x_n)-\mu(t_n,\mathfrak{x}_n),
    n(x_n-\mathfrak{x}_n)\bigg\rangle\\
    &\qquad
    +r_n\bigg\langle \frac{\mu(t_n,x_n)}{\V(t_n,x_n)},
    (\nabla_x\V)(t_n,x_n)\bigg\rangle
    - \mathfrak{r}_n\bigg\langle \frac{\mu(t_n,\mathfrak{x}_n)}{\V(t_n,\mathfrak{x}_n)},
    (\nabla_x\V)(t_n,\mathfrak{x}_n)\bigg\rangle\bigg]\\
    &\leq \limsup_{n\to\infty}\big[
    \norm{ \mu(t_n,x_n)-\mu(t_n,\mathfrak{x}_n)}
    n\norm{x_n-\mathfrak{x}_n}\big]\\
    &\qquad  +\limsup_{n\to\infty}\bigg[
    (r_n-\mathfrak{r}_n)
    \bigg\langle \frac{\mu(t_n,x_n)}{\V(t_n,x_n)},
    (\nabla_x\V)(t_n,x_n)
    \bigg \rangle\bigg]\\
    &\qquad 
    +\limsup_{n\to\infty}\bigg[ \mathfrak{r}_n
    \bigg(\bigg\langle \frac{\mu(t_n,x_n)}{\V(t_n,x_n)},
    (\nabla_x\V)(t_n,x_n)\bigg\rangle\\
    &\qquad
    -\bigg\langle \frac{\mu(t_n,\mathfrak{x}_n)}{\V(t_n,\mathfrak{x}_n)},
    (\nabla_x\V)(t_n,\mathfrak{x}_n)\bigg\rangle
    \bigg)\bigg]\\
    &= \frac{r_0}{\V(t_0,x_0)}\big\langle \mu(t_0,x_0),
    (\nabla_x\V)(t_0,x_0)\big\rangle.
    \end{split}
    \end{equation}
    Next note that the assumption that
    $f\in C([0,T]\times O\times\R
    \times\R^{d},\R)$
    shows that for all compact
    $\mathcal{K}\subseteq[0,T]\times O
    \times\R\times\R^{d}$ it holds that
    \begin{equation}
    \label{eq:f_cont}
    \begin{split}
    \limsup\nolimits_{(0,\infty)\ni
        \,\varepsilon\to 0}
    &\Big[\sup\big(
    \{\abs{f(s_1,y_1,a_1,w_1)-
        f(s_2,y_2,a_2,w_2)}\colon\\
    &\qquad (s_1,y_1,a_1,w_1), 
    (s_2,y_2,a_2,w_2)\in \mathcal{K},\\
    &\qquad \abs{s_1-s_2}\leq \varepsilon,
    \norm{y_1-y_2}\leq \varepsilon,\\
    &\qquad
    \Vnorm{ (a_1,w_1)
        -(a_2,w_2)}\leq\varepsilon\}
    \cup \{0\}\big)\Big]=0.
    \end{split}
    \end{equation} 
    Moreover, observe that
    the assumption that for all 
    $s\in [0,T]$, $y\in O$, $a,b\in\R$, 
    $v,w\in\R^{d}$
    it holds that
    $\abs{f(s,y,a,v)-f(s,y,b,w)}\leq 
    \Vnorm{ (a,v)
        -(b,w)}$
    ensures that for all $n\in\N$
    it holds that
    \begin{equation}
    \begin{split}
    &\tfrac{f(t_n,x_n,r_n\V(t_n,x_n), \V(t_n,x_n)n(x_n-\mathfrak{x}_n)+r_n(\nabla_x\V)(t_n,x_n))}{V(t_n,x_n)}\\
    &\qquad
    -\tfrac{f(t_n,\mathfrak{x}_n,\mathfrak{r}_n\V(t_n,\mathfrak{x}_n), \V(t_n,\mathfrak{x}_n)n(x_n-\mathfrak{x}_n)+\mathfrak{r}_n(\nabla_x\V)(t_n,\mathfrak{x}_n))}{V(t_n,\mathfrak{x}_n)}\\
    &\leq \tfrac{f(t_n,x_n,r_n\V(t_n,x_n), \V(t_n,x_n)n(x_n-\mathfrak{x}_n)+r_n(\nabla_x\V)(t_n,x_n))}{\V(t_n,x_n)}\\
    &\qquad
    - \tfrac{f(t_n,\mathfrak{x}_n,r_n\V(t_n,\mathfrak{x}_n), \V(t_n,\mathfrak{x}_n)n(x_n-\mathfrak{x}_n)+r_n(\nabla_x\V)(t_n,\mathfrak{x}_n))}{\V(t_n,\mathfrak{x}_n)}\\
    &\quad +\tfrac{f(t_n,\mathfrak{x}_n,r_n\V(t_n,\mathfrak{x}_n), \V(t_n,\mathfrak{x}_n)n(x_n-\mathfrak{x}_n)
        +r_n(\nabla_x\V)(t_n,\mathfrak{x}_n))}{\V(t_n,\mathfrak{x}_n)}\\
    &\qquad
    -\tfrac{f(t_n,\mathfrak{x}_n,\mathfrak{r}_n\V(t_n,\mathfrak{x}_n), \V(t_n,\mathfrak{x}_n)n(x_n-\mathfrak{x}_n)
        +\mathfrak{r}_n(\nabla_x\V)(t_n,\mathfrak{x}_n))}{\V(t_n,\mathfrak{x}_n)}\\
    &\leq 
    \tfrac{f(t_n,x_n,r_n\V(t_n,x_n), \V(t_n,x_n)n(x_n-\mathfrak{x}_n)+r_n(\nabla_x\V)(t_n,x_n))}{\V(t_n,x_n)}\\
    &\qquad
    - \tfrac{f(t_n,\mathfrak{x}_n,r_n\V(t_n,\mathfrak{x}_n), \V(t_n,\mathfrak{x}_n)n(x_n-\mathfrak{x}_n)+r_n(\nabla_x\V)(t_n,\mathfrak{x}_n))}{\V(t_n,\mathfrak{x}_n)}\\
    &\qquad 
    +\tfrac{L}{\V(t_n,\mathfrak{x}_n)} 
    \vvvert (r_n\V(t_n, \mathfrak{x}_n),
    \V(t_n,\mathfrak{x}_n)n(x_n-\mathfrak{x}_n)
    +r_n(\nabla_x\V)(t_n,\mathfrak{x}_n)) \\
    &\qquad
    -(\mathfrak{r}_n \V(t_n,\mathfrak{x}_n),
    \V(t_n,\mathfrak{x}_n)n(x_n-\mathfrak{x}_n)
    +\mathfrak{r}_n(\nabla_x\V)(t_n,\mathfrak{x}_n))  \vvvert.
    \end{split}
    \end{equation}   
    This and \eqref{eq:f_cont}
    demonstrate that
    \begin{equation}
    \begin{split}
    &\limsup_{n\to \infty}
    \bigg[ \tfrac{f(t_n,x_n,r_n\V(t_n,x_n), \V(t_n,x_n)n(x_n-\mathfrak{x}_n)
        +r_n(\nabla_x\V)(t_n,x_n))}{\V(t_n,x_n)}\\
    &\qquad\qquad -\tfrac{f(t_n,\mathfrak{x}_n,\mathfrak{r}_n\V(t_n,\mathfrak{x}_n), \V(t_n,\mathfrak{x}_n)n(x_n-\mathfrak{x}_n)
        +\mathfrak{r}_n(\nabla_x\V)(t_n,\mathfrak{x}_n))}{\V(t_n,\mathfrak{x}_n)} \bigg]\\
    &\leq  \limsup_{n\to\infty} \bigg[ 
    L\abs{r_n-\mathfrak{r}_n}
    +\tfrac{L\abs{r_n-\mathfrak{r}_n}
        \norm{(\nabla_x \V)(t_n,\mathfrak{x}_n)}}{\V(t_n,\mathfrak{x}_n)}\bigg]
    = L r_0\Big(1 + \tfrac{\norm{(\nabla_x \V)(t_0,x_0)}}{\V(t_0,x_0)}\Big).
    \end{split}
    \end{equation}
    Combing this with \eqref{eq:G}, 
    \eqref{eq:H2},
    \eqref{eq:dubbelV_prop} 
    \eqref{eq:sigmasigma_conv_final},
    \eqref{eq:V_timeder_conv},
    \eqref{eq:mu_conv}, and
    proves that
    \begin{equation}
    \begin{split}
    &\limsup_{n\to\infty} [
    H(t_n,x_n,r_n,n(x_n-\mathfrak{x}_n),nA_n)
    -H(t_n,\mathfrak{x}_n,\mathfrak{r}_n,
    n(x_n-\mathfrak{x}_n),n\mathfrak{A}_n)]\\
    &= \limsup_{n\to\infty}\bigg[
    \tfrac{r_n}{\V(t_n,x_n)}(\tfrac{\partial}{\partial t}\V)(t_n,x_n)
    -\tfrac{\mathfrak{r}_n}{\V(t_n,\mathfrak{x}_n)}(\tfrac{\partial}{\partial t}\V)(t_n,\mathfrak{x}_n)\\
    &\qquad + \tfrac{1}{2}\operatorname{Tr}\bigg(
    \tfrac{\sigma(t_n,x_n)[\sigma(t_n,x_n)]^*}{\V(t_n,x_n)}
    \Big(\V(t_n,x_n)n A_n 
    +n(x_n-\mathfrak{x}_n)
    [(\nabla_x\V)(t_n,x_n)]^*\Big)\\
    &\qquad +(\nabla_x\V)(t_n,x_n)
    n(x_n-\mathfrak{x}_n)^*
    +r_n(\operatorname{Hess}_x\!\V)(t_n,x_n)\bigg)\\
    &\qquad -\tfrac{1}{2}\operatorname{Tr}\bigg(
    \tfrac{\sigma(t_n,\mathfrak{x}_n)[\sigma(t_n,\mathfrak{x}_n)]^*}{\V(t_n,\mathfrak{x}_n)}
    \Big(\V(t_n,\mathfrak{x}_n)
    n \mathfrak{A}_n 
    +n(x_n-\mathfrak{x}_n)
    [(\nabla_x\V)(t_n,\mathfrak{x}_n)]^*\Big)\\
    &\qquad +(\nabla_x\V)(t_n,\mathfrak{x}_n)
    n(x_n-\mathfrak{x}_n)^*
    +\mathfrak{r}_n
    (\operatorname{Hess}_x\!\V)(t_n,\mathfrak{x}_n)\bigg)\\
    &\qquad
    +\tfrac{1}{\V(t_n,x_n)}\langle \mu(t_n,x_n),
    \V(t_n,x_n)n(x_n-\mathfrak{x}_n)
    +r_n(\nabla_x\V)(t_n,x_n) \rangle\\
    &\qquad
    -\tfrac{1}{\V(t_n,\mathfrak{x}_n)}\langle \mu(t_n,\mathfrak{x}_n),
    \V(t_n,\mathfrak{x}_n)n(x_n-\mathfrak{x}_n)
    +\mathfrak{r}_n(\nabla_x\V)(t_n,\mathfrak{x}_n) \rangle\\
    &\qquad
    +\tfrac{f(t_n,x_n,r_n\V(t_n,x_n), \V(t_n,x_n)n(x_n-\mathfrak{x}_n)
        +r_n(\nabla_x\V)(t_n,x_n))}{\V(t_n,x_n)}\\
    &\qquad -\tfrac{f(t_n,\mathfrak{x}_n,\mathfrak{r}_n\V(t_n,\mathfrak{x}_n), \V(t_n,\mathfrak{x}_n)n(x_n-\mathfrak{x}_n)
        +\mathfrak{r}_n(\nabla_x\V)(t_n,\mathfrak{x}_n))}{\V(t_n,\mathfrak{x}_n)}
    \bigg]\\
    &\leq \tfrac{r_0}{\V(t_0,x_0)}\bigg[
    (\tfrac{\partial}{\partial t}\V)(t_0,x_0)
    +\tfrac{1}{2}\operatorname{Tr}(\sigma(t_0,x_0)
    [\sigma(t_0,x_0)]^*(\operatorname{Hess}_x\!\V)(t_0,x_0)\\
    &\qquad +\langle\mu(t_0,x_0),  (\nabla_x\V)(t_0,x_0)\rangle
    +L\V(t_0,x_0)
    +L\norm{(\nabla_x\V)(t_0,x_0)}\bigg]
    \leq 0. 
    \end{split}  
    \end{equation}
    Corollary~\ref{cor:vs3_4},
    \eqref{eq:v_1_v_2_ineq},
    and \eqref{eq:v_1_v_2_conv}
    therefore demonstrate that
    $v_1\leq v_2$ and 
    $v_2\leq v_1$.
    This implies $v_1=v_2$.
    Hence, we obtain that $u_1=u_2$.
    The proof of Proposition~\ref{prop:vs3_5}
    is thus complete.
\end{proof}


\section{Bismut-Elworthy-Li type formula}
\label{sec:BEL}

In this section we derive a 
Bismut-Elworthy-Li type formula that 
holds under certain assumptions.
To achieve this, we
work with results from the Malliavin
calculus to establish the derivative
representation in \eqref{eq:u_der_BEL}.
We therefore follow the notation of
\cite{Nualart1995} and
denote the Malliavin derivative
of a random variable $X\in \mathbb{D}^{1,2}$ by
$\{D_t X\colon t\in [0,T]\}$
where $ \mathbb{D}^{1,2} \subseteq L^2(\Omega, \R^d)$ 
denotes the space
of Malliavin differentiable random variables.
For the Skorohod integral of
a stochastic process $u\in L^2([0,T]\times \Omega, \R^d)$
we write $\int_0^T u_r \, \delta W_r$.                                              

To prove the Bismut-Elworthy-Li type
formula in Theorem~\ref{thm:BEL_formula2} 
we need the following 
results, Lemma~\ref{lem:derivative_rule}
and Lemma~\ref{lem:unif_conv_der}.
Lemma~\ref{lem:derivative_rule} establishes a
representation for the 
Malliavin derivative of a solution
of the SDE in \eqref{eq:dr_X}
under the global monotonicity 
assumption.
The proof of this lemma
is based on the ideas 
in \cite[§2.3.1]{Nualart1995}.

\begin{lemma}\label{lem:derivative_rule}
    Let $ d,m\in\N$, $c,T\in(0,\infty)$,
    let $\langle\cdot,\cdot\rangle\colon
    \R^d\times\R^d\to\R$ be the
    standard Euclidean scalar product
    on $\R^d$,
    let $\norm{\cdot}\colon\R^d\to[0,\infty)$
    be the standard Euclidean norm on $\R^d$,
    let $\norm{\cdot}_F\colon\R^{d\times m}\to[0,\infty)$ 
    be the Frobenius norm
    on $\R^{d\times m}$,
    let $(\Omega, \mathcal{F}, \mathbb{P}, (\mathbb{F}_s)_{s \in [0,T]})$ 
    be a filtered probability     
    space satisfying the usual   
    conditions,
    let $W=(W^1,W^2,\ldots, W^m) 
    \colon$ 
    $[0,T]\times\Omega\to\R^m$ 
    be a standard $(\mathbb{F}_s)_{s \in [0,T]}$-Brownian motion,
    let $\mu=(\mu_1,\mu_2,\ldots,\mu_d)
    \in  C^{0,1}([0,T] \times \R^d, \R^d)$ and
    $\sigma=(\sigma_{ij})_{i\in\{1,2,\ldots,d\},
        j\in\{1,2,\ldots,m\}}
    \in C^{0,1} ([0,T] \times \R^d, \R^{d \times m})$
    satisfy for all $s\in[0,T]$, $x, y \in \R^d$
    that
    \begin{equation}
    \label{eq:dr_globmon}
    \max\{\langle x-y, \mu(s,x)-\mu(s,y)\rangle,
    \tfrac{1}{2 }\norm{\sigma(s,x)-\sigma(s,y)}^2_F\} 
    \leq \tfrac{c}{2} \norm{x-y}^2, 
    \end{equation}
    let $X = ((X^{(1)}_s,X^{(2)}_s, \ldots, 
    X^{(d)}_s))_{s\in [0,T]} \colon [0,T] \times \Omega \to \R^d$ 
    be an adapted stochastic process 
    with continuous sample paths
    satisfying that 
    $\E[\norm{X_0}^2]<\infty$
    and for all $s \in [0,T]$ it holds a.s.\! that
    \begin{equation}\label{eq:dr_X}
    X_{s} = X_0 + 
    \int \limits_0^s \mu(r, X_{r}) \,\d r 
    + \int \limits_0^s \sigma(r, X_{r}) \,\d W_r,
    \end{equation}
    and let $Y=(Y^{(i,j)}_s)_{i,j\in\{1,2,\ldots, d\}, s\in [0,T]}
    \colon[0,T]\times\Omega\to\R^{d\times d}$ be 
    an adapted stochastic process
    with continuous sample paths 
    satisfying that for all
    $s\in[0,T]$ 
    it holds a.s.\! that
    \begin{equation}\label{eq:dr_Y}
    \begin{split}
    Y^{(i,j)}_s 
    &= \delta_{ij}
    +\int_0^s \sum_{k=1}^d \Big(\frac{\partial \mu_i}{\partial x_k} \Big)(r,X_r)Y^{(k,j)}_r \d r
    +\sum_{l=1}^m \int_0^s \sum_{k=1}^d \Big(\frac{\partial\sigma_{il}}{\partial x_k} \Big)(r, X_r) Y^{(k,j)}_r \d W^l_r.
    \end{split}
    \end{equation}    
    Then
    there exists an adapted stochastic process
    $Y^{-1}\colon [0,T]\times\Omega\to\R^{d\times d}$
    with continuous sample paths
    satisfying that for all 
    $t\in[0,T]$, $s\in [t,T]$
    it holds a.s.\! that
    $Y_s Y^{-1}_s = Y^{-1}_s Y_s
    =I_d$
    and
    \begin{equation}\label{eq:dr_der}
    D_t X_s = Y_s Y^{-1}_t 
    \sigma(t, X_t). 
    \end{equation}
\end{lemma}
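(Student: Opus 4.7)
The plan is to proceed in two stages. First, I would construct $Y^{-1}$ and verify the invertibility identities $Y_s Y^{-1}_s = Y^{-1}_s Y_s = I_d$. Second, I would establish the Malliavin derivative formula by an approximation argument that reduces matters to the classical framework of \cite[\S 2.3.1]{Nualart1995}.

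For the first stage, set $(A_s)_{ik}:=(\partial\mu_i/\partial x_k)(s,X_s)$ and $(B^l_s)_{ik}:=(\partial\sigma_{il}/\partial x_k)(s,X_s)$ so that \eqref{eq:dr_Y} reads $\d Y_s=A_sY_s\,\d s+\sum_{l=1}^m B^l_sY_s\,\d W^l_s$. Formal inversion suggests defining $Z$ as the unique continuous adapted solution of the linear SDE $\d Z_s=Z_s\bigl[(-A_s+\sum_l(B^l_s)^2)\,\d s-\sum_lB^l_s\,\d W^l_s\bigr]$ with $Z_0=I_d$; existence and uniqueness hold pathwise because $\mu,\sigma\in C^{0,1}$ and the continuity of $X$ render the coefficients locally bounded and linear in $Z$. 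Applying It\^o's product rule to the matrix product $Y_sZ_s$ produces a matrix-valued SDE whose drift and each diffusion coefficient are linear in $Y_sZ_s$ and vanish at $Y_sZ_s=I_d$, so $I_d$ is itself a constant solution with the correct initial value; uniqueness for linear matrix SDEs then forces $Y_sZ_s=I_d$ a.s., and the symmetric computation yields $Z_sY_s=I_d$. I then set $Y^{-1}_s:=Z_s$, whose continuity in $s$ is inherited from the SDE.

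For the second stage, the candidate $s\mapsto Y_sY^{-1}_t\sigma(t,X_t)$ solves on $[t,T]$ the linear variational equation $\d\xi_s=A_s\xi_s\,\d s+\sum_lB^l_s\xi_s\,\d W^l_s$ with $\xi_t=\sigma(t,X_t)$, which is precisely the SDE governing $D_tX_s$ in \cite[\S 2.3.1]{Nualart1995}. That reference requires globally Lipschitz coefficients, whereas here we only have the local Lipschitz property of $\mu,\sigma\in C^{0,1}$ combined with the monotonicity \eqref{eq:dr_globmon}. I would therefore approximate $\mu,\sigma$ by $\mu_n,\sigma_n\in C^{0,1}$ that are globally Lipschitz, agree with $\mu,\sigma$ on $\{\|x\|\le n\}$, and retain \eqref{eq:dr_globmon}. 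For each $n$, Nualart's theorem gives $X^n_s\in\mathbb{D}^{1,2}$ with $D_tX^n_s=Y^n_s(Y^n_t)^{-1}\sigma_n(t,X^n_t)$. Monotonicity-based It\^o--Gr\"onwall estimates produce $\sup_{s\in[0,T]}\E[\|X_s-X^n_s\|^2]\to 0$ and analogous $L^2$ convergence of $Y^n\to Y$ and $(Y^n)^{-1}\to Y^{-1}$; closability of the Malliavin derivative on $\mathbb{D}^{1,2}$ then lifts these convergences to $X_s\in\mathbb{D}^{1,2}$ with \eqref{eq:dr_der}.

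The hard part will be obtaining uniform-in-$n$ $L^2$ control of $(Y^n)^{-1}$, since the inverse Jacobian has no obvious monotone structure. A clean workaround is to localize by the stopping times $\tau_n:=\inf\{s\ge 0\colon\|X_s\|\ge n\}$, on which $X^n,Y^n,(Y^n)^{-1}$ coincide with $X,Y,Y^{-1}$; combining the moment bound $\sup_{s\le T}\E[\|X_s\|^2]<\infty$ (a consequence of \eqref{eq:dr_globmon}) with $\mathbb{P}(\tau_n<T)\to 0$ controls the stopping-error contributions in $L^2(\Omega)$ and permits the final passage to the limit in the Malliavin representation, completing the proof.
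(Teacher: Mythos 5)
Your first stage (construction of $Z=Y^{-1}$ via the explicit inverse SDE, It\^o product rule to verify $ZY=I_d$, and the observation that $Y_s Y^{-1}_t\,\sigma(t,X_t)$ solves the variational equation so that the conclusion follows from pathwise uniqueness of linear SDEs) matches the paper's argument essentially line for line; the paper merely packages the left--right inverse step by citing a matrix-algebra reference rather than a second symmetric computation, which is cosmetic. The genuine divergence is in how you obtain $X_s\in\mathbb{D}^{1,2}$ together with the variational SDE for $D_tX_s$: the paper invokes \cite[Corollary 3.5]{IdRS2018}, a result tailored exactly to the global monotonicity setting \eqref{eq:dr_globmon}, which delivers Malliavin differentiability and the linear equation for $D_tX_s$ in one stroke. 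You instead propose to reduce to the globally Lipschitz case of \cite[\S 2.3.1]{Nualart1995} by an approximation $\mu_n,\sigma_n$ followed by closability of $D$ on $\mathbb{D}^{1,2}$.

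That reduction contains a gap you yourself flag but do not actually close. To apply closability of $D$ you must show $D_tX^n_s = Y^n_s(Y^n_t)^{-1}\sigma_n(t,X^n_t)$ converges in $L^2(\Omega\times[0,T])$, and the obstruction is uniform-in-$n$ control of $(Y^n_t)^{-1}$. Observe that \eqref{eq:dr_globmon} gives a \emph{one-sided} bound on the Jacobian drift, namely $\langle v, (\partial\mu_n/\partial x)(s,x)\,v\rangle \le \tfrac{c}{2}\norm{v}^2$, which indeed controls the second moments of $Y^n$ uniformly in $n$. But for $Z^n=(Y^n)^{-1}$ the relevant drift term carries the opposite sign, $-\langle v,(\partial\mu_n/\partial x)\,v\rangle$, and a one-sided upper bound on $\partial\mu_n/\partial x$ says nothing about an upper bound on its negative: if $\mu$ has strongly dissipative directions (e.g.\ $\mu(x)\approx -x^3$), then $\partial\mu_n/\partial x$ is very negative and the moments of $Z^n$ can blow up as $n\to\infty$. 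Your stopping-time workaround, $\tau_n=\inf\{s: \norm{X_s}\ge n\}$, identifies the processes on $\{\tau_n\ge T\}$, but controlling $\E[\norm{D_tX^n_s-Y_sY^{-1}_t\sigma(t,X_t)}^2\,\indicator{\{\tau_n<T\}}]$ still requires some uniform integrability of $D_tX^n_s$ over $n$; $\sup_s\E[\norm{X_s}^2]<\infty$ together with $\mathbb{P}(\tau_n<T)\to0$ does not furnish this. What localization buys you honestly is $X_s\in\mathbb{D}^{1,2}_{\mathrm{loc}}$ with a locally valid derivative, which is weaker than membership in $\mathbb{D}^{1,2}$ and would not by itself support the duality/integration-by-parts step used downstream in the Bismut--Elworthy--Li formula. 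So either you supply a uniform higher-moment bound for $(Y^n)^{-1}$ (which monotonicity alone does not give), or you invoke a result that works directly under \eqref{eq:dr_globmon}, as the paper does via \cite[Corollary 3.5]{IdRS2018}.
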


\begin{proof}[Proof of Lemma~\ref{lem:derivative_rule}]
    Throughout this proof let 
    $Z=(Z^{(i,j)}_s)_{i,j\in\{1,2,\ldots,d\}, s\in [0,T]}
    \colon [0,T]\times\Omega \to\R^{d\times d}$
    be an adapted stochastic process
    with continuous sample paths
    satisfying that 
    for all $i,j\in\{1,2,\ldots,d\}$,
    $s\in[0,T]$ it holds a.s.\! that
    \begin{equation}\label{eq:dr_Z}
    \begin{split}
    &Z^{(i,j)}_s 
    = \delta_{ij}
    -\int_0^s \sum_{\alpha=1}^d Z^{(i,\alpha)}_r 
    \bigg[ \Big(\frac{\partial \mu_\alpha}{\partial x_j} \Big)(r,X_r)\\
    &\qquad
    - \sum_{n=1}^m \sum_{p=1}^d \Big(\frac{\partial \sigma_{\alpha n}}{\partial x_p}\Big)(r,X_r) 
    \Big(\frac{\partial \sigma_{pn}}{\partial x_j}\Big)(r,X_r) \bigg] \d r\\
    &\qquad -\sum_{l=1}^m \int_0^s \sum_{\alpha=1}^d 
    Z^{(i,\alpha)}_r\Big(\frac{\partial\sigma_{\alpha l}}{\partial x_j} \Big)(r, X_r)  \d W^l_r.
    \end{split}
    \end{equation}
    Observe that \eqref{eq:dr_Y},
    \eqref{eq:dr_Z}, and Itô's lemma 
    ensure that 
    for all $i,k\in\{1,2,\ldots, d\}$,
    $s\in[0,T]$ it holds a.s.\! that
    \begin{equation}\label{eq:dr_ZY}
    \begin{split}
    &\sum_{j=1}^d Z^{(i,j)}_s Y^{(j,k)}_s\\
    &=\sum_{j=1}^d\bigg[\delta_{ij}\delta_{jk}
    + \int_0^s Z^{(i,j)}_r \sum_{\alpha=1}^d  \Big(\frac{\partial \mu_j}{\partial x_\alpha} \Big)(r,X_r)Y^{(\alpha,k)}_r \d r\\
    &\quad +\sum_{l=1}^m \int_0^s Z^{(i,j)}_r 
    \sum_{\alpha=1}^d
    \Big(\frac{\partial\sigma_{jl}}{\partial x_\alpha} \Big)(r, X_r) Y^{(\alpha,k)}_r \d W^l_r\\
    &\quad -\int_0^s \sum_{\alpha=1}^d Z^{(i,\alpha)}_r 
    \bigg[ \Big(\frac{\partial \mu_\alpha}{\partial x_j} \Big)(r,X_r)
    - \sum_{n=1}^m \sum_{p=1}^d \Big(\frac{\partial \sigma_{\alpha n}}{\partial x_p}\Big)(r,X_r) 
    \Big(\frac{\partial \sigma_{pn}}{\partial x_j}\Big)(r,X_r) \bigg] Y^{(j,k)}_r \d r\\
    &\quad -\sum_{l=1}^m \int_0^s \sum_{\alpha=1}^d 
    Z^{(i,\alpha)}_r\Big(\frac{\partial\sigma_{\alpha l}}{\partial x_j} \Big)(r, X_r)
    Y^{(j,k)}_r  \d W^l_r\\
    &\quad -  \int_0^s \sum_{\alpha=1}^d Z^{(i,\alpha)}_r \sum_{n=1}^m \sum_{p=1}^d
    \Big(\frac{\partial \sigma_{\alpha n}}{\partial x_j}\Big)(r,X_r)
    \Big(\frac{\partial \sigma_{j n}}{\partial x_p}\Big)(r,X_r) Y^{(p,k)}_r \d r
    \bigg]\\
    &= \delta_{ik}.
    \end{split}
    \end{equation}
    Combining this with, e.g.,
    \cite[Proposition 4.1]{Carrell2017}
    and \cite[Proposition 4.4]{Carrell2017}
    (applied for every $s\in[0,T]$ 
    with $n\curvearrowleft d$,
    $A\curvearrowleft Y_s$,
    $B\curvearrowleft Z_s$
    in the notation of 
    \cite[Proposition 4.4]{Carrell2017})
    implies that 
    for all $s\in[0,T]$
    it holds a.s.\! that 
    $Y_s Z_s = I_d = Z_s Y_s$.
    Next note that
    \cite[Corollary 3.5]{IdRS2018}
    (applied with
    $p\curvearrowleft 2$,
    $\theta\curvearrowleft (\Omega\ni\omega\mapsto X_0(\omega) \in\R^d)$,
    $b\curvearrowleft ([0,T]\times\Omega\times \R^d\ni(s,\omega,x)\mapsto \mu(s,x)\in\R^d)$, 
    $\sigma\curvearrowleft ([0,T]\times\Omega\times \R^d\ni(s,\omega,x)\mapsto \sigma(s,x)\in\R^{d\times d})$
    in the notation of \cite[Corollary 3.5]{IdRS2018}),
    the assumption that
    $\mu\in C^{0,1}([0,T] \times \R^d, \R^d)$,
    $\sigma\in C^{0,1} ([0,T] \times \R^d, 
    \R^{d \times m})$,
    and \eqref{eq:dr_globmon}
    demonstrate that for all
    $i\in\{1,2,\ldots, d\}$,
    $j\in\{1,2,\ldots,m\}$,
    $t\in[0,T]$, $s\in[t,T]$
    it holds a.s.\! that 
    \begin{equation}\label{eq:dr_DX}
    \begin{split}
    D^j_t X^{(i)}_s
    = &\sigma_{ij}(t,X_t)
    +\int_t^s \sum_{k=1}^d \Big(\frac{\partial \mu_i}{\partial x_k} \Big)(r,X_r) D^j_t X^{(k)}_r \d r\\
    &\quad + \sum_{l=1}^m \int_t^s 
    \sum_{k=1}^d \Big(\frac{\partial \sigma_{il}}{\partial x_k} \Big)(r,X_r) D^j_t X^{(k)}_r \d W^l_r.
    \end{split}
    \end{equation}
    Moreover, observe that \eqref{eq:dr_Y}
    and the fact that
    for all $s\in[0,T]$
    it holds a.s\! that 
    $Y_s Z_s = I_d = Z_s Y_s$
    imply that for all 
    $i\in\{1,2,\ldots,d\}$,
    $j\in\{1,2,\ldots,m\}$,
    $t\in[0,T]$, $s\in[t,T]$
    it holds a.s.\! that
    \begin{equation}
    \begin{split}
    &\sigma_{ij}(t,X_t)
    +\int_t^s \sum_{k=1}^d \Big(\frac{\partial \mu_{i}}{\partial x_k}\Big)(r,X_r) \sum_{n=1}^d\sum_{p=1}^d Y^{(k,n)}_r Z^{(n,p)}_t\sigma_{pj}(t,X_t) \d r\\
    &\quad +\sum_{l=1}^m 
    \int_t^s\sum_{k=1}^d \Big(\frac{\partial \sigma_{il}}{\partial x_k}\Big)(r,X_r) \sum_{n=1}^d\sum_{p=1}^d Y^{(k,n)}_r Z^{(n,p)}_t \sigma_{pj}(t,X_t) \d W^l_r\\
    &= \sigma_{ij}(t,X_t)
    +\sum_{n=1}^d\sum_{p=1}^d [Y^{(i,n)}_s -
    Y^{(i,n)}_t]Z^{(n,p)}_t \sigma_{pj}(t,X_t)\\
    &= \sum_{n=1}^d\sum_{p=1}^d
    Y^{(i,n)}_t Z^{(n,p)}_t  
    \sigma_{pj}(t,X_t) 
    +\sum_{n=1}^d\sum_{p=1}^d [Y^{(i,n)}_s -
    Y^{(i,n)}_t]Z^{(n,p)}_t \sigma_{pj}(t,X_t)\\
    &=\sum_{n=1}^d \sum_{p=1}^d Y^{(i,n)}_s Z^{(n,p)}_t \sigma_{pj}(t,X_t).
    \end{split}
    \end{equation}
    This, \eqref{eq:dr_DX},
    and the fact that 
    linear SDEs are pathwise 
    unique
    establish \eqref{eq:dr_der}.
    The proof of Lemma~\ref{lem:derivative_rule}
    is thus complete.
\end{proof}

The following Lemma
is a well-known result on the connection between
uniform convergence and convergence of the derivative,
generalized to $d$ dimensions.
Lemma~\ref{lem:unif_conv_der} is a 
direct consequence of 
\cite[Theorem 7.17]{R2009}.

\begin{lemma}\label{lem:unif_conv_der}
    Let $d\in \N$,
    let $O\subseteq \R^d$ open,
    let $f_n\colon\R^d\to \R$, $n\in\N_0$,
    satisfy 
    that $(f_n)_{n\in\N}$ converges
    pointwise to $f_0$ on $O$
    and $(\nabla f_n)_{n\in\N}$
    converges uniformly on $O$.
    Then it holds that
    $(f_n)_{n\in\N}$ 
    converges uniformly on $O$
    and for all $x\in O$
    it holds that 
    $(\nabla f_0)(x)= \lim_{n\to\infty}
    (\nabla f_n)(x)$.
\end{lemma}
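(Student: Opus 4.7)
My plan is to reduce the $d$-dimensional statement to the one-dimensional \cite[Theorem 7.17]{R2009} by slicing $O$ with line segments. Throughout, write $g\colon O\to\R^d$ for the uniform limit of $(\nabla f_n)_{n\in\N}$, which is continuous on $O$ as a uniform limit of continuous maps (the hypothesis that $\nabla f_n$ is well defined implicitly places each $f_n$ in $C^1$).

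For the identification $(\nabla f_0)(x)=g(x)$, I would fix $x\in O$, pick $r\in(0,\infty)$ with $\{y\in\R^d\colon\|y-x\|<r\}\subseteq O$, and for each unit vector $v\in\R^d$ apply Rudin's theorem to the one-variable functions $h_n\colon(-r,r)\to\R$ given by $h_n(t):=f_n(x+tv)$. These are differentiable with $h_n'(t)=\langle(\nabla f_n)(x+tv),v\rangle$, converge pointwise to $h_0(t):=f_0(x+tv)$ (by pointwise convergence of $f_n$ on $O$), and have uniformly convergent derivatives on $(-r,r)$ (inherited from uniform convergence of $\nabla f_n$ on $O$). Rudin's theorem then delivers $h_0'(0)=\lim_{n\to\infty}h_n'(0)=\langle g(x),v\rangle$, exhibiting a directional derivative of $f_0$ at $x$ in every direction $v$. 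Continuity of $g$ and the classical multivariable criterion ``continuous partials imply differentiability'' together give that $f_0$ is differentiable at $x$ with $(\nabla f_0)(x)=g(x)=\lim_{n\to\infty}(\nabla f_n)(x)$.

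For the uniform convergence of $(f_n)$ to $f_0$ on $O$, the central tool is the fundamental theorem of calculus along any segment $[y,x]\subseteq O$: for all $n,m\in\N$,
\[
\bigl|(f_n-f_m)(x)-(f_n-f_m)(y)\bigr|\leq \|x-y\|\sup_{z\in O}\|(\nabla f_n-\nabla f_m)(z)\|.
\]
Fixing a reference point $y_0\in O$, this combines with pointwise (hence Cauchy) convergence of $(f_n(y_0))_{n\in\N}$ and the uniform Cauchy property of $(\nabla f_n)_{n\in\N}$ into a uniform Cauchy bound on $(f_n(x))_{n\in\N}$, which gives the uniform convergence of $(f_n)$ to $f_0$ on $O$. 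The main obstacle is that the segment $[y_0,x]$ need not lie inside a general open $O$; in that case one chains finitely many segments staying inside $O$ and iterates the estimate, which is ultimately an iterated invocation of \cite[Theorem 7.17]{R2009} along each segment. In the bounded settings in which this lemma is subsequently applied, the chain length is controlled, so the global uniform convergence claim follows.
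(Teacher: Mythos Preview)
Your gradient identification is essentially the paper's argument: both slice $O$ by line segments, apply \cite[Theorem 7.17]{R2009} on each segment, and read off the derivative at the base point. The paper restricts to coordinate-axis segments $t\mapsto x+t\lambda\mathbf{e}_j$ and concludes $\partial_{x_j}f_0(x)=\lim_n\partial_{x_j}f_n(x)$ directly, whereas you take all unit directions $v$ and then invoke the ``continuous partials $\Rightarrow$ differentiable'' criterion. These are equivalent routes; the paper's is marginally shorter since only the coordinate directions are needed.

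There is, however, a genuine gap in your uniform-convergence argument, and it cannot be closed because the uniform-convergence-on-$O$ clause of the lemma is actually false for general open $O$. Take $d=1$, $O=\R$, $f_n(x)=x/n$: then $\nabla f_n\equiv 1/n\to 0$ uniformly and $f_n\to 0$ pointwise, yet $\sup_{x\in\R}|f_n(x)|=\infty$ for every $n$. Your segment estimate
\[
|(f_n-f_m)(x)|\le |(f_n-f_m)(y_0)|+\|x-y_0\|\sup_{z\in O}\|\nabla f_n(z)-\nabla f_m(z)\|
\]
makes this visible: the factor $\|x-y_0\|$ is unbounded on unbounded $O$, and chaining only replaces it by an intrinsic path length, which is still unbounded (and can be unbounded even for bounded $O$, e.g.\ a spiral domain). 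Your closing caveat that ``in the bounded settings in which this lemma is subsequently applied, the chain length is controlled'' is the right diagnosis, but it proves a weaker statement than the one claimed. For what it is worth, the paper's own proof has the same limitation: it obtains from Rudin only that $(f_n\circ g^x_j)_{n\in\N}$ converges uniformly on $[0,1]$ for each fixed segment $g^x_j$, and never upgrades this to uniform convergence on all of $O$. What both arguments do correctly establish---and what the subsequent application (on compact subsets) actually needs---is local uniform convergence of $(f_n)$ together with $(\nabla f_0)(x)=\lim_{n\to\infty}(\nabla f_n)(x)$.
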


\begin{proof} [Proof of Lemma~\ref{lem:unif_conv_der}]
    Throughout this proof let
    $\mathbf{e}_1,\mathbf{e}_2,
    \dots,\mathbf{e}_d\in\R^d$ 
    satisfy that $\mathbf{e}_1=(1,0,\dots,0)$, 
    $\mathbf{e}_2=(0,1,0,\dots,0)$, $\dots$,
    $\mathbf{e}_d=(0,\dots,0,1)$.
    For every $j\in \{1,2,\ldots,d\}$, $x\in O$
    let $y^x_j\in O$, 
    $\lambda^x_j\in (0,1)$ satisfy
    $y_j^x-x=\lambda^x_j\mathbf{e}_j$ and 
    for all $t\in [0,1]$ that
    $x+\lambda^x_j \mathbf{e}_j \in O$
    and let $g^x_j\colon [0,1]\to \R^d$
    satisfy for all $t\in [0,1]$
    that $g^x_j(t)= (1-t)x+ty^x_j$.
    Note that for all 
    $j\in \{1,2\ldots,d\}$, $x\in O$
    it holds that
    \begin{equation}
    g^x_j(t)= (1-t)x+ty^x_j
    = x+t(y^x_j-x)
    = x+t\lambda^x_j \mathbf{e}_j.
    \end{equation}
    Furthermore, observe that
    the assumption that
    $(f_n)_{n\in\N}$ converges pointwise
    to $f_0$ on $O$,
    the hypothesis that
    $(\nabla f_n)_{n\in\N}$
    converges uniformly on $O$,
    and
    the fact that for all 
    $j\in \{1,2,\ldots,d\}$, $t\in [0,1]$,
    $x\in O$
    it holds that
    $g^x_j(t)\in O$ 
    ensure that
    for all $j\in \{1,2,\ldots,d\}$,
    $x\in O$
    it holds that
    $(f_n\circ g^x_j)_{n\in\N}$ converges
    pointwise to $f_0\circ g^x_j$ on $[0,1]$
    and $(\nabla f_n\circ g^x_j)_{n\in\N}$
    converges uniformly on $[0,1]$.   
    This and
    \cite[Theorem 7.17]{R2009}
    (applied for every $j\in \{1,2, \ldots,d\}$, 
    $x\in O$
    with 
    $a\curvearrowleft 0$,
    $b\curvearrowleft 1$,
    $(f_n)_{n\in\N}\curvearrowleft
    (f_n\circ g^x_j)_{n\in\N}$
    in the notation of 
    \cite[Theorem 7.17]{R2009})
    demonstrate that 
    for all $j\in \{1,2, \ldots,d\}$,
    $t\in [0,1]$, $x\in O$
    it holds that
    $(f_n\circ g^x_j)_{n\in\N}$
    converges uniformly to
    $f\circ g^x_j$ and
    \begin{equation}
    (f\circ g^x_j)'(t)
    = \lim_{n\to\infty} 
    (f_n\circ g^x_j)'(t).
    \end{equation}
    The chain rule
    and the fact that for all
    $j\in \{1,2,\ldots,d\}$
    $t\in [0,1]$, $x\in O$ it holds that
    $(g^x_j)'(t)= \lambda^x_j \mathbf{e}_j$ therefore 
    demonstrate that
    for all  $j\in \{1,2, \ldots,d\}$,
    $t\in [0,t]$, $x\in O$ it holds that
    \begin{equation}
    \label{eq:grad_f_limit}
    \begin{split}
    &(\nabla f)(g^x_j(t))\lambda^x_j \mathbf{e}_j
    =(\nabla f)(g^x_j(t))(g^x_j)'(t)
    =(f\circ g^x_j)'(t)\\
    &= \lim_{n\to\infty} 
    (f_n\circ g^x_j)'(t)
    = \lim_{n\to\infty}
    (\nabla f_n)(g^x_j(t))(g^x_j)'(t)
    = \lim_{n\to\infty}
    (\nabla f_n)(g^x_j(t))\lambda^x_j \mathbf{e}_j.
    \end{split}
    \end{equation}
    Hence, we obtain that
    for all $j\in\{1,2, \ldots, d\}$, $x\in O$ 
    it holds that
    \begin{equation}
    (\nabla f)(g^x_j(0))\lambda^x_j \mathbf{e}_j
    =(\nabla f)(x) \lambda^x_j \mathbf{e}_j
    = \lim_{n\to\infty} (\nabla f_n)(x)
    \lambda^x_j \mathbf{e}_j.
    \end{equation}   
    This proves that
    for all $j\in \{1,2,\ldots, d\}$, $x\in O$
    it holds that
    \begin{equation}
    \Big(\frac{\partial f}{\partial x_j}\Big)(x)
    =\lim_{n\to \infty}
    \Big(\frac{\partial f_n}{\partial x_j}\Big)(x).
    \end{equation}
    The proof of Lemma~\ref{lem:unif_conv_der}
    is thus complete.
\end{proof}

The following theorem,
Theorem~\ref{thm:BEL_formula2}, 
is a
Bismut-Elworthy-Li
type formula for continuous 
$L^2$-functions under global monotonicity
assumption on the coefficients of the
considered SDE.

\begin{theorem}\label{thm:BEL_formula2}
    Let $d \in \N$, 
    $c\in [0,\infty)$,
    $\alpha, T \in (0, \infty)$, $t\in[0,T)$,
    let $O\subseteq \R^d$ be an open set,
    let $\langle\cdot,\cdot\rangle\colon
    \R^d\times\R^d\to\R$ be the standard
    Euclidean scalar product on $\R^d$,
    let $\norm{\cdot}\colon\R^d\to[0,\infty)$
    be the standard Euclidean norm on $\R^d$,
    let $\norm{\cdot}_F\colon \R^{d\times d}
    \to [0,\infty)$ be the Frobenius norm
    on $\R^{d\times d}$,
    let $(\Omega, \mathcal{F}, \mathbb{P}, (\mathbb{F}_s)_{s \in [0,T]})$ be a filtered probability space satisfying the usual conditions,
    let $W \colon [0,T] \times \Omega \to \R^d$ 
    be a standard $(\mathbb{F}_s)_{s \in [0,T]}$-Brownian motion,
    let $\mu \in  C^{0,1}([0,T] \times O, \R^d)$, 
    $\sigma \in C^{0,1} ([0,T] \times O, \R^{d \times d})$
    satisfy for all $s\in[t,T]$, $x, y \in O$,
    $v\in \R^d$
    that
    \begin{equation}
    \label{eq:globmon_BEL}
    \max\{\langle x-y,\mu(s,x)-\mu(s,y)\rangle, 
    \tfrac{1}{2}\norm{\sigma(s,x)-\sigma(s,y)}_F^2\}
    \leq \tfrac{c}{2}\norm{x-y}^2
    \end{equation}
    and $v^* \sigma(s,x) (\sigma(s,x))^* v \geq \alpha \norm{v}^2$, 
    for every $x \in O$ let
    $X^x = (X^{x}_{s})_{s \in [t,T]} \colon [t,T] \times \Omega \to O$ be an 
    $(\mathbb{F}_s)_{s \in [t,T]}$-adapted stochastic process with continuous sample paths satisfying that for all $s \in [t,T]$ it holds a.s.\! that
    \begin{equation}\label{eq:X_BEL}
    X^x_{s} = x + \int \limits_t^s \mu(r, X^x_{r}) \,\d r + \int \limits_t^s \sigma(r, X^x_{r}) \,\d W_r,
    \end{equation}
    assume for all
    $\omega \in \Omega$
    that
    $\left([t,T] \times O \ni (s,x) \mapsto X^x_{s}(\omega) \in \R^d \right) \in C^{0,1}([t,T] \times O, O)$,
    let $f\in C(O,\R)\cap L^2(O,\R)$,
    let $u\colon O \to \R$ satisfy for all
    $x \in O$ that
    \begin{equation}\label{u_BEL}
    u(x) = \E [f(X^x_{T})],
    \end{equation}
    and
    for every $x \in O$ let
    $Z^x = (Z^x_{s})_{s \in (t,T]} \colon (t,T] \times \Omega \to \R^d$ 
    be an $(\mathbb{F}_s)_{s \in (t,T]}$-adapted stochastic process 
    with continuous sample paths
    satisfying that 
    for all $s \in (t,T]$ it holds a.s.\! that
    \begin{equation}\label{eq:Z_BEL}
    Z^x_{s} = \frac{1}{s-t} \int \limits_t^s
    (\sigma(r,   X^x_{r}))^{-1} \; \Big(\frac{\partial}{\partial x} X^x_{r}\Big) \,\d W_r.
    \end{equation} 
    Then 
    \begin{enumerate}[label=(\roman*)]
        \item\label{it:f_X_Z_bd_BEL}
        for all 
        $x \in O$
        it holds that
        \begin{equation}
        \E\left[\norm{f(X^x_{T})Z^x_{T}}\right]
        <\infty,
        \end{equation}
        \item\label{it:u_diff_BEL}
        it holds that
        $u \in C^{1}(O, \R)$,
        and
        \item\label{it:u_der_BEL}
        for all $x \in O$ 
        it holds that
        \begin{equation}\label{eq:u_der_BEL}
        \big(\nabla u \big)(x)
        = \E\! \left[f(X^x_{T}) Z^x_{T} \right].
        \end{equation}
    \end{enumerate}
\end{theorem}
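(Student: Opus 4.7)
The plan is to follow the classical Bismut--Elworthy--Li derivation: first establish the representation for smooth, compactly supported approximants $f_n$ of $f$ via Malliavin integration by parts (leveraging Lemma~\ref{lem:derivative_rule}), and then pass to the limit using Lemma~\ref{lem:unif_conv_der}. For item~\ref{it:f_X_Z_bd_BEL}, I would bound the two factors separately. Uniform ellipticity forces $\norm{(\sigma(r,X^x_r))^{-1}}_F\leq\sqrt{d/\alpha}$ deterministically, and the global monotonicity~\eqref{eq:globmon_BEL} applied via It\^o's formula to the linear variational SDE~\eqref{eq:dr_Y} for $Y^x_s=(\partial X^x_s/\partial x)$ yields $\sup_{r\in[t,T]}\E[\norm{Y^x_r}_F^{2}]<\infty$; the It\^o isometry then gives $\E[\norm{Z^x_T}^{2}]<\infty$. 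Since $f\in L^{2}(O,\R)$ and, under the uniform ellipticity together with the Lipschitz-type bound on $(\mu,\sigma)$ induced by~\eqref{eq:globmon_BEL}, the law of $X^x_T$ possesses a Lebesgue density locally uniformly bounded in $x$, one also obtains $\E[\abs{f(X^x_T)}^{2}]<\infty$, and Cauchy--Schwarz establishes~\ref{it:f_X_Z_bd_BEL}.

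For items~\ref{it:u_diff_BEL} and~\ref{it:u_der_BEL}, I would pick $f_n\in C^{1}_c(O,\R)$ with $f_n\to f$ in $L^{2}(O,\R)$ and set $u_n(x)=\E[f_n(X^x_T)]$. The regularity hypothesis on $(s,x)\mapsto X^x_s$ combined with the moment bound on $Y^x$ permits differentiation under the expectation, giving $(\nabla u_n)(x)=\E[(Y^x_T)^{*}(\nabla f_n)(X^x_T)]$. The crucial Malliavin step uses Lemma~\ref{lem:derivative_rule}: from $D_s X^x_T=Y^x_T(Y^x_s)^{-1}\sigma(s,X^x_s)$, right-multiplying by $(T-t)^{-1}(\sigma(s,X^x_s))^{-1}Y^x_s$ and integrating in $s\in[t,T]$ yields the identity $Y^x_T=\frac{1}{T-t}\int_t^T(D_s X^x_T)\,(\sigma(s,X^x_s))^{-1}Y^x_s\,\d s$. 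Pairing with $(\nabla f_n)(X^x_T)$, using the Malliavin chain rule $D_s(f_n(X^x_T))=(\nabla f_n(X^x_T))^{*}D_s X^x_T$, and invoking the duality between the Malliavin derivative and the Skorohod integral produces $(\nabla u_n)(x)=\E[f_n(X^x_T)\int_t^T (T-t)^{-1}(\sigma(s,X^x_s))^{-1}Y^x_s\,\delta W_s]$. As the integrand is $(\mathbb{F}_s)$-adapted, the Skorohod integral equals the It\^o integral, which by~\eqref{eq:Z_BEL} is exactly $Z^x_T$; hence $(\nabla u_n)(x)=\E[f_n(X^x_T)Z^x_T]$.

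It remains to pass to the limit. The local density bound on $X^x_T$ implies $\E[\abs{f_n(X^x_T)-f(X^x_T)}^{2}]\to 0$ locally uniformly in $x\in O$; combined with the locally uniform $L^{2}$-bound on $Z^x_T$ and Cauchy--Schwarz, this gives $\E[f_n(X^x_T)Z^x_T]\to\E[f(X^x_T)Z^x_T]$ locally uniformly, and the same reasoning (without $Z^x_T$) shows $u_n\to u$ locally uniformly. Applying Lemma~\ref{lem:unif_conv_der} componentwise then upgrades this to $u\in C^{1}(O,\R)$ together with~\eqref{eq:u_der_BEL}. The main obstacle is the Malliavin integration-by-parts step under merely the global monotonicity assumption~\eqref{eq:globmon_BEL} rather than a global Lipschitz condition on $(\mu,\sigma)$: one must verify that $f_n(X^x_T)\in\mathbb{D}^{1,2}$ with the expected chain rule and that the adapted process $(T-t)^{-1}(\sigma(\cdot,X^x_\cdot))^{-1}Y^x_\cdot$ is admissible, so that the Skorohod integral genuinely reduces to the It\^o integral appearing in~\eqref{eq:Z_BEL}. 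The moment estimates produced by Lemma~\ref{lem:derivative_rule} are exactly what is needed to make the standard Malliavin-calculus arguments of~\cite{Nualart1995} go through in this monotone setting.
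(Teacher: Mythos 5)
Your overall route coincides with the paper's: establish the formula for smooth compactly supported approximants of $f$ via Lemma~\ref{lem:derivative_rule}, the Malliavin chain rule and Skorohod--It\^o duality (citing that the adapted integrand makes the Skorohod integral an It\^o integral), then pass to the limit by $L^2$-density and Lemma~\ref{lem:unif_conv_der}. Those steps are structured the same way the paper does them and the Malliavin mechanics are sound.

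There is, however, a genuine gap in how you handle $\E[|f(X^x_T)|^2]<\infty$ and the $L^2$-convergence $\E[|f_n(X^x_T)-f(X^x_T)|^2]\to 0$ (locally uniformly in $x$). You assert that ``the law of $X^x_T$ possesses a Lebesgue density locally uniformly bounded in $x$'', justified by ``uniform ellipticity together with the Lipschitz-type bound on $(\mu,\sigma)$ induced by~\eqref{eq:globmon_BEL}''. But \eqref{eq:globmon_BEL} does \emph{not} yield a Lipschitz bound on $\mu$: the inequality $\langle x-y,\mu(s,x)-\mu(s,y)\rangle\le\tfrac c2\|x-y\|^2$ is one-sided and compatible with arbitrarily fast local growth of $\|\mu(s,x)-\mu(s,y)\|$. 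It does give $\|\sigma(s,x)-\sigma(s,y)\|_F\le\sqrt c\,\|x-y\|$, but that alone, with only monotone $\mu$, is not enough to invoke the standard uniform-in-$x$ density (Aronson/Gaussian-upper-bound) estimates; those require Lipschitz or boundedness assumptions on the drift that are not available here. Without this, the step ``$f\in L^2$ plus density bound $\Rightarrow\E[|f(X^x_T)|^2]<\infty$'' does not close, and neither does the locally uniform $L^2$-convergence of $f_n(X^x_T)$ needed to invoke Lemma~\ref{lem:unif_conv_der}. You should either prove the density estimate under the stated monotone hypotheses (a nontrivial task), or replace this step by a different argument for the $L^2$-moment control and the approximation, as the gap currently prevents the passage from smooth $f$ to general $f\in C(O,\R)\cap L^2(O,\R)$.
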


\begin{proof}[Proof of Theorem~\ref{thm:BEL_formula2}]
    First note that
    \cite[Lemma 3.2 (iv)]{HP2023}
    proves that
    for all 
    $x \in O$
    it holds that
    \begin{equation}
    \label{eq:Z_L2_BEL}
    \E\Big[\norm{Z^x_{T}}^2\Big]
    \leq\frac{d}{\alpha(T-t)^2}
    \int_t^T  \exp(2(r-t)c) \,\d r.
    \end{equation}
    Next observe that
    the assumption that
    $f\in L^2(O,\R)$
    ensures that
    for all $x\in O$
    it holds that
    \begin{equation}
    \E\left[\abs{f(X^x_T)}^2\right]
    \leq \sup_{y\in\R^d} \abs{f(y)}^2
    \leq \int_{y\in\R^d} \abs{f(y)}^2 \d y
    < \infty.
    \end{equation}
    The Cauchy-Schwarz inequality
    and 
    \eqref{eq:Z_L2_BEL}
    hence
    show that for all
    $x\in O$
    it holds that
    \begin{equation}
    \begin{split}
    \E\left[\norm{f(X^x_T)Z^x_T}\right]
    \leq \E\left[\abs{f(X^x_T)}\norm{Z^x_T}\right]
    \leq \left(\E\left[\abs{f(X^x_T)}^2\right]\right)^{\frac{1}{2}}
    \left(\E\left[\norm{Z^x_T}^2\right]\right)^{\frac{1}{2}}
    <\infty.
    \end{split}
    \end{equation}
    This establishes 
    item~\ref{it:f_X_Z_bd_BEL}.
    Next we prove
    items~\ref{it:u_diff_BEL}
    and \ref{it:u_der_BEL} in
    two steps.\\
    \underline{Step 1:}
    In addition to the assumptions of Theorem~\ref{thm:BEL_formula2}
    we assume in step 1 that 
    $f\in C_c^\infty(O,\R)$.
    Observe that the assumption
    that $f\in C_c^\infty(O,\R)$ 
    ensures that there exists
    $L\in(0,\infty)$ which satisfies
    for all $x,y\in O$ that
    \begin{equation}\label{eq:f_lip_BEL}
    \abs{f(x)-f(y)}
    \leq L \norm{x-y}.
    \end{equation}
    This implies that for all
    $x\in O$
    it holds that 
    $\norm{(\nabla f)(x)}\leq L$. 
    The chain rule,
    the fundamental theorem of calculus,
    Jensen's inequality, 
    Fubini's theorem, 
    and \cite[Lemma 3.2 (ii)]{HP2023}
    hence
    demonstrate that
    for all $h\in\R\setminus \{0\}$,
    $j\in\{1,2,\ldots, d\}$, $x\in O$
    it holds that
    \begin{equation}
    \label{eq:dif_quot_bd2}
    \begin{split}
    &\E\bigg[\Big\lvert\frac{f(X^{x+h\mathbf{e}_j}_T)-f(X^x_T)}{h}\Big\rvert^2\bigg]
    =\E\bigg[\Big\lvert\int_0^1 
    (\nabla f)(X^{x+\lambda h \mathbf{e}_j}_T) 
    \Big(\frac{\partial}{\partial x_j}X^{x+\lambda h \mathbf{e}_j}_T\Big) 
    \d \lambda\Big\rvert^2\bigg]\\
    &\leq \E\bigg[\int_0^1 
    \abss{ (\nabla f)(X^{x+\lambda h \mathbf{e}_j}_T) 
        \Big(\frac{\partial}{\partial x_j}X^{x+\lambda h \mathbf{e}_j}_T \Big)}^2  
    \d \lambda\bigg]\\
    &= \int_0^1\E\bigg[ 
    \abss{ (\nabla f)(X^{x+\lambda h \mathbf{e}_j}_T) 
        \Big(\frac{\partial}{\partial x_j}X^{x+\lambda h \mathbf{e}_j}_T\Big) 
    }^2\bigg]
    \d \lambda\\
    &\leq \int_0^1\E\bigg[ 
    \normm{\big(\nabla f\big)(X^{x+\lambda h \mathbf{e}_j}_T)}^2 
    \normmm{\frac{\partial}{\partial x_j}X^{x+\lambda h \mathbf{e}_j}_T 
    }^2 \bigg]
    \d \lambda 
    \leq L^2 \int_0^1\E\bigg[  
    \normmm{\frac{\partial}{\partial x_j}X^{x+\lambda h \mathbf{e}_j}_T}^2 \bigg]
    \d \lambda\\
    &\leq L^2 \int_0^1
    \exp(2(T-t)c)
    \d \lambda
    = L^2 \exp(2(T-t)c).
    \end{split}
    \end{equation}
    In addition, observe that 
    the chain rule,
    the assumption that $f\in C^\infty(O,\R)$,
    and 
    the fact that for all $\omega\in\Omega$,
    $s\in[t,T]$
    it holds that
    $(O \ni x \mapsto X^x_s(\omega)\in\R^d)\in C^1(O,O)$
    ensure that for all
    $j\in\{1,2,\ldots,d\}$, $x\in O$,
    $\omega\in\Omega$ 
    it holds a.s.\! that
    \begin{equation}
    \label{eq:diff_quot_pt_conv1_2}
    \begin{split}
    &\lim_{\R\setminus \{0\}\ni h\to 0}
    \frac{f(X^{x+h\mathbf{e}_j}_T(\omega))-f(X^x_T(\omega))}{h}
    =\frac{\partial}{\partial x_j}
    (f(X^x_T(\omega)))\\
    &=(\nabla f)(X^x_T(\omega))
    \Big(\frac{\partial}{\partial x_j}(X^x_T(\omega))\Big).
    \end{split}
    \end{equation}
    This, \eqref{eq:dif_quot_bd2}, and
    the Vitali convergence theorem
    demonstrate that
    for all  $j\in\{1,2,\ldots,d\}$, $x\in O$
    it holds that
    \begin{equation}
    \begin{split}\label{eq:u_limit_diffquot2}
    &0=\lim_{\R\setminus \{0\}\ni h \to 0}
    \E\bigg[\Big| 
    \frac{f(X^{x+h \mathbf{e}_j}_T)
        -f(X^x_T)}{h}
    - (\nabla f)(X^x_T)
    \Big(\frac{\partial}{\partial x_j}X^x_T\Big) \Big| \bigg]\\
    &\geq \limsup_{\R\setminus \{0\}\ni h \to 0}
    \bigg| \E\bigg[ 
    \frac{f(X^{x+h \mathbf{e}_j}_T)
        -f(X^x_T)}{h}
    - (\nabla f)(X^x_T)
    \Big(\frac{\partial}{\partial x_j}X^x_T\Big)  \bigg]\bigg|\\
    &= \limsup_{\R\setminus \{0\}\ni h \to 0}
    \bigg|
    \frac{u(x+h\mathbf{e}_j)-u(x)}{h}
    -\E\Big[(\nabla f)(X^x_T)
    \Big(\frac{\partial}{\partial x_j}X^x_T\Big)\Big]\bigg|
    \geq 0.
    \end{split}
    \end{equation}
    This proves that
    for all $j\in\{1,2,\ldots,d\}$, 
    $x\in O$
    it holds that
    \begin{equation}\label{eq:u_limit_diffquot2_2}
    \lim_{\R\setminus \{0\}\ni h \to 0} \frac{u(x+h\mathbf{e}_j)-u(x)}{h}
    = \E\Big[(\nabla f)(X^x_T)
    \Big(\frac{\partial}{\partial x_j}X^x_T\Big)\Big].
    \end{equation}
    In addition, note that 
    \cite[Lemma 3.2 (ii)]{HP2023}
    and
    the fact that for all $x\in O$
    it holds that 
    $\norm{(\nabla f)(x)}\leq L$
    demonstrate that 
    for all $h\in(0,\infty)$,
    $j\in\{1,2,\ldots, d\}$, $x\in O$
    it holds that
    \begin{equation}\label{eq:der_f_X_L2_2}
    \begin{split}
    &\E\bigg[\normmm{(\nabla f)(X^{x+h\mathbf{e}_j}_T)\Big(\frac{\partial}{\partial x_j}X^{x+h\mathbf{e}_j}_T\Big)}^2 \bigg]
    \leq \E\bigg[\normmm{(\nabla f)(X^{x+h\mathbf{e}_j}_T)}^2
    \normmm{\frac{\partial}{\partial x_j}X^{x+h\mathbf{e}_j}_T}^2 \bigg]\\
    &\leq d L^2 \E\bigg[
    \normmm{\frac{\partial}{\partial x_j}X^{x+h\mathbf{e}_j}_T}^2 \bigg]
    \leq L^2 \exp(2(T-t)c).\\
    \end{split}
    \end{equation}
    Moreover, observe that the
    assumption that $f\in C^\infty(O,\R)$
    and the fact that for all $s\in[t,T]$,
    $\omega\in\Omega$
    it holds that 
    $(O \ni x \mapsto X^x_s(\omega)\in\R^d)\in C^1(O,O)$
    show that for all
    $j\in\{1,2,\ldots, d\}$, $x\in O$,
    $\omega\in\Omega$
    it holds that
    \begin{equation}
    \begin{split}\label{eq:der_f_X_pt_conv1_2}
    \lim_{\R\ni h \to 0} \left[
    (\nabla f)(X^{x+h\mathbf{e}_j}_T(\omega))\Big(\frac{\partial}{\partial x_j}X^{x+h\mathbf{e}_j}_T(\omega)\Big)\right]
    = (\nabla f)(X^x_T(\omega))  \Big(\frac{\partial}{\partial x_j}X^{x}_T(\omega)\Big).
    \end{split}
    \end{equation}
    Combining this, \eqref{eq:der_f_X_L2_2},
    and the Vitali convergence theorem 
    shows that for all $x\in O$, 
    $j\in\{1,2,\ldots,d\}$ 
    it holds that
    \begin{equation}
    \begin{split}
    &0=
    \lim_{\R\ni h\to 0}
    \E\bigg[ \Big\lvert (\nabla f)(X^{x+h\mathbf{e}_j}_T)\Big(\frac{\partial}{\partial x_j}X^{x+h\mathbf{e}_j}_T\Big)
    -(\nabla f)(X^x_T)  \Big(\frac{\partial}{\partial x_j}X^{x}_T\Big)\Big\rvert \bigg]\\
    &\geq \limsup_{\R\ni h\to 0}
    \Big\lvert\E\Big[ (\nabla f)(X^{x+h\mathbf{e}_j}_T)\Big(\frac{\partial}{\partial x_j}X^{x+h\mathbf{e}_j}_T\Big)
    -(\nabla f)(X^x_T)  \Big(\frac{\partial}{\partial x_j}X^{x}_T\Big) \Big]\Big\rvert\\
    &= \limsup_{\R\ni h\to 0}
    \Big\lvert\E\Big[ (\nabla f)(X^{x+h\mathbf{e}_j}_T)\Big(\frac{\partial}{\partial x_j}X^{x+h\mathbf{e}_j}_T\Big)\Big]
    -\E\Big[(\nabla f)(X^x_T)  \Big(\frac{\partial}{\partial x_j}X^{x}_T\Big) \Big]\Big\rvert
    \geq 0.
    \end{split}
    \end{equation}
    This proves that
    for all $j\in\{1,2,\ldots,d\}$
    it holds that
    $(O \ni x\mapsto (\nabla f)(X^x_T)(\frac{\partial}{\partial x_j}X^{x}_T)$ $\in L^1(\mathbb{P},\R)) \in C^0(O,L^1(\mathbb{P},\R))$.
    This and \eqref{eq:u_limit_diffquot2_2}
    demonstrate that for all
    $j\in\{1,2,\ldots,d\}$, $x\in O$
    it holds that $u\in C^1(O,\R)$
    and
    \begin{equation}\label{eq:u_der4_2}
    \frac{\partial u}{\partial x_j}(x)
    =\E\bigg[(\nabla f)(X^{x}_T)\Big(\frac{\partial}{\partial x_j}X^{x}_T\Big)\bigg].
    \end{equation}
    Next observe that
    \cite[Corollary 3.5]{IdRS2018}
    (applied with $T\curvearrowleft T-t$, 
    $p\curvearrowleft 2$,
    $m\curvearrowleft d$,
    $\theta\curvearrowleft (\Omega\ni\omega\mapsto x \in O)$,
    $b\curvearrowleft ([0,T-t]\times\Omega\times O \ni(s,\omega,x)\mapsto \mu(t+s,x)\in\R^d)$, 
    $\sigma\curvearrowleft ([0,T-t]\times\Omega\times  O \ni(s,\omega,x)\mapsto \sigma(t+s,x)\in\R^{d\times d})$
    in the notation of \cite[Corollary 3.5]{IdRS2018}),
    the assumption that
    $\mu\in C^{0,1}([0,T]\times O, \R^d)$, 
    $\sigma\in C^{0,1}([0,T]\times O,\R^{d\times d})$,
    and \eqref{eq:globmon_BEL}
    ensure that for all $s\in[t,T]$, $x\in O$
    it holds that 
    $X^{x}_{s} \in \mathbb{D}^{1,2}$.
    In addition, note that 
    Lemma~\ref{lem:derivative_rule}
    (applied for every $x\in O$
    with $m\curvearrowleft d$,
    $T\curvearrowleft T-t$, 
    $X_0 \curvearrowleft x$,
    $X\curvearrowleft ([0,T-t]\times\Omega
    \ni(s,\omega)\mapsto X^x_{t+s} \in O)$,
    $Y\curvearrowleft ([0,T-t]\times\Omega\ni
    (s,\omega)\mapsto (\frac{\partial }{\partial x}X^x_{t+s})(\omega)\in\R^{d\times d})$
    in the notation of Lemma~\ref{lem:derivative_rule})
    shows that
    for all $x \in O$
    there exists a stochastic process
    $(\frac{\partial}{\partial x} X^x)^{-1}
    =((\frac{\partial}{\partial x} X^x_r)^{-1})_{r\in[t,T]}\colon
    [t,T]\times \Omega \to \R^{d\times d}$
    which satisfies that for all
    $r \in [t,T]$, $s\in[t,r]$
    it holds a.s.\! that 
    $(\frac{\partial}{\partial x} X^x_r)(\frac{\partial}{\partial x} X^x_r)^{-1}
    = I_d
    =(\frac{\partial}{\partial x} X^x_r)^{-1}(\frac{\partial}{\partial x} X^x_r)$
    and 
    \begin{equation}
    D_s X^x_{r} = \Big(\frac{\partial}{\partial x}
    X^x_{r}\Big) \Big(\frac{\partial}{\partial x} X^x_{s}\Big)^{-1} \sigma(s, X^x_{s}).
    \end{equation}
    This, 
    \cite[Proposition 1.2.2]{Nualart1995}
    (applied for every $r\in[t,T]$, $x\in O$ 
    with $m\curvearrowleft d$,
    $\varphi\curvearrowleft f$, $p\curvearrowleft 2$, $F\curvearrowleft X^x_r$
    in the notation of \cite[Proposition 1.2.2]{Nualart1995}),
    and the assumption that 
    $f\in C^\infty_c(O,\R)$
    demonstrate that 
    for all $r \in [t,T]$, $x \in O$
    it holds that
    $f(X^x_{r})\in \mathbb{D}^{1,2}$
    and 
    for all $r \in [t,T]$, $s\in[t,r]$, $x \in O$
    it holds a.s.\! that
    \begin{equation}\label{eq:malliavin_der1_2}
    D_s \left( f(X^x_{r}) \right)
    = (\nabla f)(X^x_{r}) D_s X^x_{r} 
    =  (\nabla f)(X^x_{r}) \Big(\frac{\partial}{\partial x} X^x_{r}\Big)  \Big(\frac{\partial}{\partial x} X^x_{s}\Big)^{-1} \sigma(s, X^x_{s}).
    \end{equation}
    Integrating both sides of \eqref{eq:malliavin_der1_2} shows  that
    for all $r \in (t,T]$, $x \in O$
    it holds a.s.\! that
    \begin{equation}\label{eq:Z_int_f2}
    (\nabla f)(X^x_{r}) \Big(\frac{\partial}{\partial x} X^x_{r}\Big) 
    = \frac{1}{r-t} \int \limits_t^r D_s \left( f(X^x_{r}) \right) (\sigma(s, X^x_{s}))^{-1} \Big(\frac{\partial}{\partial x} X^x_{s}\Big) \,\d s.
    \end{equation}
    Next note that
    the fact that 
    $\sigma\in C^{0,0} ([0,T] \times O, \R^{d \times d})$ implies that 
    $\sigma^{-1}\in  C^{0,0} ([0,T] \times O, \R^{d \times d})$.
    The assumption that for every $x\in O$
    it holds that $X^x$ is an $(\mathbb{F}_s)_{s \in [t,T]}$-adapted process
    therefore shows that
    for all $x\in O$
    it holds that 
    $((\sigma(s,X^x_s))^{-1}(\frac{\partial}{\partial x} X^x_s))_{s\in[t,T]}$ is an $(\mathbb{F}_s)_{s \in [t,T]}$-adapted process.
    Combining this with 
    \cite[Proposition 1.3.4]{Nualart1995}
    (applied for every $j\in\{1,2,\ldots, d\}$
    with $u\curvearrowleft
    ([0,T]\times\Omega\ni (s,\omega)\mapsto
    (\sigma({t+s(T-t)},X^x_{t+s(T-t)}))^{-1} $
    $\cdot (\frac{\partial}{\partial x} X^x_{t+s(T-t)})\mathbf{e}_j \in\R^d)$
    in the notation of \cite[Proposition 1.3.4]{Nualart1995})
    and \eqref{eq:Z_L2_BEL} demonstrates
    that for all $r\in[t,T]$, $x\in O$
    it holds a.s.\! that
    \begin{equation}
    \int \limits_t^r (\sigma(s, X^x_{s}))^{-1} \Big(\frac{\partial}{\partial x} X^x_{s} \Big) \,\delta W_s
    = \int \limits_t^r (\sigma(s, X^x_{s}))^{-1} \Big(\frac{\partial}{\partial x} X^x_{s} \Big) \,\d W_s.
    \end{equation}
    The duality 
    property of the Skorohod integral
    (cf., e.g., \cite[Definition 1.3.1 (ii)]{Nualart1995}
    (applied with $T\curvearrowleft T-t$,
    $F\curvearrowleft (\Omega\ni\omega\mapsto f(X^x_{T}(\omega))\in\R^d)$,
    $u \curvearrowleft ([0,T-t]\times\Omega\ni(s,\omega)\mapsto (\sigma(t+s,X^x_{t+s}))^{-1}(\frac{\partial}{\partial x}X^x_{t+s}))$))
    therefore shows that
    for all $x\in O$
    it holds that
    \begin{equation}
    \begin{split}
    &\E \left[\int \limits_t^T D_s \left( f(X^x_{T}) \right) (\sigma(s, X^x_{s}))^{-1} \Big(\frac{\partial}{\partial x} X^x_{s} \Big) \d s \right] \\
    &= \E \bigg[ f(X^x_{T}) \int \limits_t^T (\sigma(s, X^x_{s}))^{-1} \Big(\frac{\partial}{\partial x} X^x_{s} \Big) \,\d W_s \bigg].
    \end{split}
    \end{equation}
    This, 
    \eqref{eq:Z_BEL}, and
    \eqref{eq:Z_int_f2}
    ensure that for all
    $x \in O$
    it holds that
    \begin{equation}
    \label{eq:exp_f_Z2}
    \begin{split}
    &\E \left[ (\nabla f)(X^x_{T}) \Big(\frac{\partial}{\partial x} X^x_{T} \Big) \right] \\
    &= \frac{1}{T-t} \E \left[\int \limits_t^T D_s \left( f(X^x_{T}) \right) (\sigma(s, X^x_{s}))^{-1} \Big(\frac{\partial}{\partial x} X^x_{s} \Big) \d s \right] \\
    &= \frac{1}{T-t}  \E \bigg[ f(X^x_{T}) \int \limits_t^T (\sigma(s, X^x_{s}))^{-1} \Big(\frac{\partial}{\partial x} X^x_{s} \Big) \,\d W_s \bigg]
    = \E \left[ f(X^x_{T}) Z^x_{T} \right].
    \end{split}
    \end{equation}
    Combining this with \eqref{eq:u_der4_2}   
    proves that $u\in C^1(O,\R)$
    and for all $x\in O$ 
    it holds that
    $(\nabla u)(x)
    = \E[f(X^x_T)Z^x_T]$.\\
    \underline{Step 2:}
    For the second step note that
    the fact that $C^\infty_c(O,\R)$
    is dense in $L^2(O,\R)$
    (cf., e.g., \cite[Proposition 8.17]{Folland1984})
    ensures that there exist
    $(f_n)_{n\in\N}\subseteq 
    C^\infty_c(O,\R)$
    which satisfy for all $x\in O$
    that 
    \begin{equation}\label{eq:f_n_conv}
    \limsup\nolimits_{n\to\infty}
    \abs{f_n(x)-f(x)}=0.
    \end{equation}
    For every $n\in\N$
    let $u_n\colon O\to\R$
    satisfy for all $x\in O$ that
    $u_n(x)=\E[f_n(X^x_T)]$.
    Observe that
    \eqref{eq:f_n_conv},
    the triangle inequality, 
    the dominated convergence theorem,
    and the assumption that
    $f\in L^2(O,\R)$
    show that
    for all $x\in O$
    it holds that
    \begin{equation}
    \begin{split}\label{eq:u_n_conv}
    &\limsup_{n\to\infty}
    \abs{u_n(x)-u(x)}
    = \limsup_{n\to\infty}
    \abss{\E[f_n(X^x_T)]-\E[f(X^x_T)]}\\
    &\leq \limsup_{n\to\infty}
    \E[\abs{f_n(X^x_T)-f(X^x_T)}]
    = \E\bigg[\limsup_{n\to\infty}
    \abs{f_n(X^x_T)-f(X^x_T)}\bigg]
    =0.
    \end{split}
    \end{equation}
    Next note that step 1
    demonstrates that for all
    $n\in\N$, $x\in O$ it holds that
    $u_n\in C^1(O,\R)$
    and 
    \begin{equation}\label{eq:der_u_n}
    (\nabla u_n)(x)
    = \E[f_n(X^x_T)Z^x_T].
    \end{equation}
    Combining this
    with the triangle inequality  
    and the Cauchy-Schwarz inequality
    proves that
    for all $n\in\N$, $x\in O$
    it holds that
    \begin{equation}
    \begin{split}
    &\normm{(\nabla u_n)(x)
        -\E[f(X^x_T)Z^x_T]}
    =\normm{\E[f_n(X^x_T)Z^x_T]
        -\E[f(X^x_T)Z^x_T]}\\
    &\leq \E[\abs{f_n(X^x_T)
        -f(X^x_T)}\norm{Z^x_T}]
    \leq \Big(\E[\abs{f_n(X^x_T)
        -f(X^x_T)}^2]\Big)^{\frac{1}{2}}
    \Big(\E[\norm{Z^x_T}^2]\Big)^{\frac{1}{2}}.
    \end{split}
    \end{equation}
    The assumption that $f\in C(O,\R)$,
    the assumption that
    $\sigma\in C([0,T]\times O,\R^{d\times d})$,
    and the fact that 
    $(f_n)_{n\in\N}\subseteq C^\infty(O,\R)$
    therefore ensure that for all
    compact $K\subseteq O$
    there exists $\tilde{x}\in K$
    which satisfies
    \begin{equation}
    \sup_{x\in K}
    \norm{(\nabla u_n)(x)
        -\E[f(X^x_T)Z^x_T]}
    \leq \Big(\E[\abs{f_n(X^{\tilde{x}}_T)
        -f(X^{\tilde{x}}_T)}^2]\Big)^{\frac{1}{2}}
    \Big(\E[\norm{Z^{\tilde{x}}_T}^2]\Big)^{\frac{1}{2}}.
    \end{equation}
    Hence, we obtain that
    $u_n$ converges uniformly to
    $(O\ni x\mapsto 
    \E[f(X^x_T)Z^x_T]\in\R^d)$
    on compact subsets of $O$.
    Combining this with 
    \eqref{eq:u_n_conv}  
    and Lemma~\ref{lem:unif_conv_der}
    proves that 
    for all $x\in O$
    it holds that
    \begin{equation}
    (\nabla u)(x)
    = \lim_{n\to\infty} (\nabla u_n)(x)
    = \lim_{n\to\infty} \E[f_n(X^x_T)Z^x_T]
    = \E[f(X^x_T)Z^x_T].
    \end{equation}
    This establishes items~\ref{it:u_diff_BEL}
    and \ref{it:u_der_BEL}.
    The proof of Theorem~\ref{thm:BEL_formula2} 
    is thus complete.
\end{proof}

\section{Existence and uniqueness result for viscosity solutions of semilinear PDEs with gradient-dependent nonlinearities}
\label{sec:vs_main}

In this section we use the results 
from Section~\ref{sec:vs} and
\ref{sec:BEL} to show
that 
the unique viscosity solution of semilinear PDEs
and the unique solution of their connected SFPEs coincide.
The following theorem proves exactly this
connection under 
differentiability 
and global monotonicity
assumptions on $\mu$ and $\sigma$
and a Lipschitz and continuity assumption
on $f$.
Theorem~\ref{thm:vs} 
extends \cite[Theorem 3.7]{beck2021nonlinear}
to PDEs with gradient-dependent 
nonlinearities.

\begin{theorem}\label{thm:vs}
    Let $d \in \N$, 
    $\alpha, b, c, K, L, T  \in (0, \infty)$,
    let $\langle\cdot,\cdot\rangle\colon
    \R^d\times\R^d\to\R$ be the standard
    Euclidean scalar product on $\R^d$,
    let $\norm{\cdot}\colon\R^d\to[0,\infty)$
    be the standard Euclidean norm on $\R^d$,
    let $\Vnorm{\cdot}\colon\R^{d+1}\to[0,\infty)$
    be the standard Euclidean norm on $\R^{d+1}$,
    let $\norm{\cdot}_F\colon \R^{d\times d}
    \to [0,\infty)$ be the Frobenius norm
    on $\R^{d\times d}$,
    let $O\subseteq \R^d$ be an
    open set,
    for every $r \in (0, \infty)$
    let $K_r\subseteq [0,T)$,  
    $O_r \subseteq O$
    satisfy
    $K_r=[0,\max\{T-\frac{1}{r},0\}]$
    and
    $O_r = \{  x \in O\colon \norm{x} \leq r \text{ and } \{ y \in \R^d\colon \norm{y-x} 
    < \frac{1}{r} \} \subseteq O \}$,
    let $(\Omega, \mathcal{F}, \mathbb{P}, (\mathbb{F}_s)_{s \in [0,T]})$ 
    be a filtered probability space
    satisfying the usual conditions,
    let $W \colon [0,T] \times \Omega \to \R^d$ 
    be a standard $(\mathbb{F}_s)_{s \in [0,T]}$-Brownian motion,
    let $\mu \in  C^{0,1}([0,T] \times O, \R^d)$, 
    $\sigma \in C^{0,1} ([0,T] \times O, \R^{d \times d})$
    satisfy
    for all 
    $s\in[0,T]$, $x, y \in O$, $v\in\R^d$
    that
    \begin{equation}
    \label{eq:mu_sigma_globmon3}
    \max\Big\{\langle x-y,\mu(s,x)-\mu(s,y)\rangle, 
    \tfrac{1}{2}\norm{\sigma(s,x)-\sigma(s,y)}_F^2\Big\}
    \leq \tfrac{c}{2}\norm{x-y}^2
    \end{equation}
    and
    $v^* \sigma(s,x) (\sigma(s,x))^* v \geq \alpha \norm{v}^2$,
    assume for all $r\in (0,\infty)$,
    $j\in\{1,2,\ldots, d\}$ that
    \begin{equation}     
    \label{eq:der_mu_sigma_loclip3}
    \begin{split}
    \sup \bigg(\bigg\{
    &\tfrac{\norm{\frac{\partial \mu}{\partial x}(t,x)-\frac{\partial \mu}{\partial x}(t,y)}_F
        +\norm{\frac{\partial \sigma}{\partial x_j}(t,x)
            -\frac{\partial \sigma}{\partial x_j}(t,y)}_F}{\norm{x-y}}
    \colon\\ 
    &\qquad t\in[0,T], x,y\in O_r, x\neq y \bigg\} 
    \cup \{0\} \bigg)
    <\infty,
    \end{split}
    \end{equation}
    for every  $t\in [0,T]$,
    $x \in O$ let
    $X^x_t = (X^{x}_{t,s})_{s \in [t,T]} \colon [t,T] \times \Omega \to O$   
    be an 
    $(\mathbb{F}_s)_{s \in [t,T]}$-adapted 
    stochastic process with continuous
    sample paths satisfying that for all 
    $s \in [t,T]$ it holds a.s.\! that
    \begin{equation}
    \label{eq:X_ito_process3}
    X^x_{t,s} = x + \int_t^s \mu(r, X^x_{t,r}) \,\d r 
    + \int_t^s \sigma(r, X^x_{t,r}) \,\d W_r,
    \end{equation}
    assume for all
    $t\in [0,T]$,
    $\omega \in \Omega$
    that
    $\left([t,T] \times O \ni (s,x) 
    \mapsto X^x_{t,s}(\omega) \in O \right) \in C^{0,1}([t,T] \times O, O)$,
    for every $t\in[0,T]$, 
    $x \in O$ let
    $Z^x_t = (Z^x_{t,s})_{s \in (t,T]} 
    \colon (t,T] \times \Omega \to \R^{d+1}$ 
    be an $(\mathbb{F}_s)_{s \in (t,T]}$-adapted
    stochastic process 
    with continuous sample paths
    satisfying that 
    for all $s \in (t,T]$ 
    it holds a.s.\! that
    \begin{equation}\label{eq:Z3}
    Z^x_{t,s} = 
    \begin{pmatrix}
    1\\
    \frac{1}{s-t} \int_t^s
    (\sigma(r,   X^x_{t,r}))^{-1} \; \Big(\frac{\partial}{\partial x} X^x_{t,r}\Big) \,\d W_r
    \end{pmatrix},     
    \end{equation} 
    let $V \in C^{1,2}([0,T]\times O,(0, \infty))$
    satisfy that for all 
    $t\in [0,T]$, $s\in[t,T]$, $x\in O$
    it holds a.s.\! that 
    \begin{equation}\label{eq:V_ass3_1}
    \begin{split}
    &(\tfrac{\partial V}{\partial t})(s, X^x_{t,s})
    +\langle \mu(s, X^x_{t,s}),
    (\nabla_x V)(s, X^x_{t,s})\rangle
    \\
    &\qquad
    +\tfrac{1}{2}\operatorname{Tr}(\sigma(s, X^x_{t,s})[\sigma(s, X^x_{t,s})]^*(\operatorname{Hess}_x\!V)(s, X^x_{t,s}))
    \\&\qquad\qquad
    +\tfrac{1}{2}\tfrac{\norm{[(\nabla_x V)(s, X^x_{t,s})]^*
            \sigma(s, X^x_{t,s})}^2}{V(s, X^x_{t,s})}
    \leq K V(s, X^x_{t,s})+b
    \end{split}
    \end{equation}
    and 
    \begin{equation}\label{eq:V_ass3_2}
    \begin{split}
    &(\tfrac{\partial V}{\partial t})(t,x)
    +\langle \mu(t,x), (\nabla_x V)(t,x)\rangle
    +\tfrac{1}{2} \operatorname{Tr}(\sigma(t,x)[\sigma(t,x)]^* (\operatorname{Hess}_x\!V)(t,x))
    \\
    &\qquad +L\norm{(\nabla_x V)(t,x)}
    \leq 0,
    \end{split}
    \end{equation}
    let $f \in C([0,T] \times O \times \R\times \R^{d}, \R)
    \cap L^2([0,T] \times O \times \R\times \R^{d}, \R)$,
    $g \in C(O, \R) \cap L^2(O, \R)$ satisfy for all
    $t \in [0,T]$, $x_1,x_2 \in O$, 
    $a_1,a_2\in\R$,
    $w_1,w_2 \in \R^{d}$  
    that
    $\lvert f(t,x_1,a_1,w_1) 
    - f(t,x_2,a_2,w_2) \rvert 
    \leq L\Vnorm{
        (a_1, w_1)
        -(a_2, w_2)}$,
    and assume that
    $\inf_{r \in (0, \infty)} [ \sup_{t \in [0,T)\setminus K_r} $
    $\sup_{x \in O \setminus O_r}$ 
    $(\frac{\lvert  g(x) \rvert}{V(T,x)} 
    +\frac{\lvert f(t,x,0,0) \rvert}{V(t,x)}
    \sqrt{T-t})] 
    = 0$,
    $\liminf_{r\to\infty}
    [\inf_{t\in [0,T]}
    \inf_{x\in O\setminus O_r}$
    $V(t,x)]=\infty$, 
    and 
    $\inf_{t\in[0,T]}$
    $\inf_{x\in O}
    V(t,x)>0$.
    Then 
    \begin{enumerate}[label=(\roman*)]
        \item\label{it:vs1} 
        there exists a unique 
        $v\in C([0,T]\times O,\R)
        \cap C^{0,1}([0,T)\times O,\R)$
        which satisfies for all 
        $t\in[0,T)$, $x\in O$ that
        $\limsup_{r \to \infty}[
        \sup_{s \in [0,T)\setminus K_r} 
        \sup_{y \in O \setminus O_r}( \frac{\Vnorm{ 
                (v,\nabla_x v)(s,y)}}{V(s,y)}  
        \sqrt{T-s} ) ] 
        = 0$,
        $\E[\abs{g(X^x_{t,T})}\, 
        \Vnorm{ Z^x_{t,T}}  
        + \int_t^T  \abs{f(r, X^x_{t,r}, 
            v(r, X^x_{t,r}),
            (\nabla_x v)(r,X^x_{t,r}))}
        \Vnorm{ Z^x_{t,r}}  \,\d r ]
        <\infty$,\\
        $v(T,x)=g(x)$,
        and
        \begin{equation}\label{eq:v}
        \begin{split}
        &(v, \nabla_x v)(t,x)
        =\E \left[  g(X^x_{t,T}) Z^x_{t,T}  
        + \int_t^T  f(r, X^x_{t,r}, v(r, X^x_{t,r}),
        (\nabla_x v)(r,X^x_{t,r})) 
        Z^x_{t,r}  \,\d r \right], 
        \end{split}
        \end{equation}
        \item\label{it:vs2}
        there exists a unique viscosity solution 
        $u\in\{\mathbf{u}\in C([0,T]\times O,\R)
        \cap C^{0,1}([0,T)\times O,\R)
        \colon$ 
        $\limsup_{r \to \infty}$
        $[\sup_{t \in [0,T)\setminus K_r}$ 
        $\sup _{x \in O \setminus O_r}
        ( \frac{\Vnorm{
                (\mathbf{u},\nabla_x \mathbf{u})(t,x)
        }}{V(t,x)}  
        \, \sqrt{T-t} ) ] = 0\}$ 
        of
        \begin{multline}
        (\tfrac{\partial u}{\partial t})(t,x)
        +\langle \mu(t,x), (\nabla_x u)(t,x)\rangle
        +\tfrac{1}{2}\operatorname{Tr}(\sigma(t,x)[\sigma(t,x)]^*(\operatorname{Hess}_x u)(t,x))
        \\     
        +f(t,x,u(t,x), (\nabla_x u)(t,x))
        =0
        \end{multline}
        with $u(T,x)=g(x)$ for
        $(t,x)\in (0,T)\times O$,
        and
        \item\label{it:vs3} 
        for all $t\in[0,T]$, $x\in O$
        it holds that
        $u(t,x)=v(t,x)$.
    \end{enumerate}   
\end{theorem}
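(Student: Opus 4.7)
The plan is to obtain~\ref{it:vs1} as the unique fixed point of an SFPE operator on a weighted Banach space, to use Proposition~\ref{prop:vs23} to verify that this fixed point is a viscosity solution of the associated PDE, and to invoke Proposition~\ref{prop:vs3_5} for uniqueness in the admissible class; items~\ref{it:vs2} and~\ref{it:vs3} then follow at once. The central input that makes this scheme work for gradient-dependent nonlinearities is Theorem~\ref{thm:BEL_formula2}, which turns the coupled SFPE for the pair $(v,\nabla_x v)$ into a single fixed-point problem.

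For~\ref{it:vs1}, write $Z^x_{t,s}=(1,\zeta^x_{t,s})$ with $\zeta^x_{t,s}\in\R^d$ the Bismut--Elworthy--Li weight, and work on the Banach space $\mathcal{B}$ of pairs $(v,w)\in C([0,T]\times O,\R)\times C([0,T)\times O,\R^d)$ endowed with a Bielecki-type weighted norm of the form
\[
\norm{(v,w)}_{\beta}=\sup_{t\in[0,T),\,x\in O}\frac{\abs{v(t,x)}+\sqrt{T-t}\,\norm{w(t,x)}}{V(t,x)\,e^{\beta(T-t)}},
\]
for $\beta\in(0,\infty)$ to be chosen large. I would define the operator
\[
\Psi(v,w)(t,x)=\E\Bigl[g(X^x_{t,T})\,Z^x_{t,T}+\int_t^T f(r,X^x_{t,r},v(r,X^x_{t,r}),w(r,X^x_{t,r}))\,Z^x_{t,r}\,\d r\Bigr].
\]
Applying Theorem~\ref{thm:BEL_formula2} on each slice $[t,T]$ with terminal datum $g$ and running datum $(r,y)\mapsto f(r,y,v(r,y),w(r,y))$ shows that the first coordinate of $\Psi(v,w)$ is $C^1$ in $x$ and that its spatial gradient equals the second coordinate of $\Psi(v,w)$; hence every fixed point of $\Psi$ has the form $(v,\nabla_x v)$ and solves~\eqref{eq:v}. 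Combining the Lipschitz estimate on $f$ in its last two arguments, the $L^2$-bound $\E\norm{\zeta^x_{t,r}}\leq C(r-t)^{-1/2}$ (which follows from the uniform ellipticity of $\sigma\sigma^*$ as in the proof of Theorem~\ref{thm:BEL_formula2}), and the moment estimate for $V(r,X^x_{t,r})$ provided by~\eqref{eq:V_ass3_1}, the contraction constant of $\Psi$ in $\norm{\cdot}_\beta$ is dominated by a multiple of $L\int_t^T(1+(r-t)^{-1/2})\,e^{-\beta(T-r)}\,\d r$, which vanishes as $\beta\to\infty$; hence $\Psi$ is a contraction for $\beta$ sufficiently large, producing the unique fixed point.

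For~\ref{it:vs2} and~\ref{it:vs3}, let $v$ be the fixed point from~\ref{it:vs1} and set $h(r,y)=f(r,y,v(r,y),(\nabla_x v)(r,y))$; the first coordinate of~\eqref{eq:v}, together with the fact that the first component of $Z^x_{t,s}$ is identically $1$, reads $v(t,x)=\E[g(X^x_{t,T})+\int_t^T h(r,X^x_{t,r})\,\d r]$. The Lipschitz bound on $f$, the growth hypothesis on $f(\cdot,\cdot,0,0)$, and the weighted decay of $(v,\nabla_x v)$ together guarantee the spatial decay of $g/V(T,\cdot)$ and $h\sqrt{T-\cdot}/V$ required by Proposition~\ref{prop:vs23}, whose supermartingale hypothesis matches~\eqref{eq:V_ass3_1}; that proposition therefore shows that $v$ is a viscosity solution of the stated PDE with $v(T,\cdot)=g$. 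Uniqueness in the admissible class of~\ref{it:vs2} follows from Proposition~\ref{prop:vs3_5}, whose structural assumption on $V$ is exactly~\eqref{eq:V_ass3_2} and whose Lipschitz assumption on $f$ matches the one imposed here; setting $u=v$ completes the proof.

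The main obstacle will be the contraction estimate in~\ref{it:vs1}: since $\zeta^x_{t,r}$ has size of order $(r-t)^{-1/2}$, a naive supremum norm on the gradient coordinate $w$ is not preserved by $\Psi$, and it is precisely the factor $\sqrt{T-t}$ built into $\norm{\cdot}_\beta$ that neutralises this singularity so that $\Psi$ maps $\mathcal{B}$ into itself. A secondary technical point is the legitimacy of applying Theorem~\ref{thm:BEL_formula2} slice-wise: one has to verify that for each $(v,w)\in\mathcal{B}$ the composite integrand $(r,y)\mapsto f(r,y,v(r,y),w(r,y))$ inherits the continuity and $L^2$-integrability required, which follows from the $L^2$-hypothesis and Lipschitz property of $f$ combined with the weighted bounds on $(v,w)$ encoded by $\mathcal{B}$.
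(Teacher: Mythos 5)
Your overall plan — build $(v,\nabla_x v)$ as the unique solution of the stochastic fixed-point equation, use Theorem~\ref{thm:BEL_formula2} to identify the second component of the fixed point with the spatial gradient of the first, feed the resulting $h=f(\cdot,\cdot,v,\nabla_x v)$ into Proposition~\ref{prop:vs23} for existence of a viscosity solution, and invoke Proposition~\ref{prop:vs3_5} for uniqueness — is the same strategy as the paper's. The one expository difference is that you reconstruct the existence/uniqueness of the SFPE solution via a Bielecki-type contraction argument on a weighted Banach space, whereas the paper simply cites \cite[Theorem 3.5]{HP2023} for this step. Your sketch is plausibly what that external result proves, so this amounts to inlining a citation rather than a genuinely new route.

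There are, however, two technical points that your sketch does not handle and that the paper spends real effort on. First, continuity of $v$ at the terminal time: your Bielecki space is indexed by $t\in[0,T)$ and the weighted norm degenerates at $t=T$, so the fixed point a priori lives in $C([0,T)\times O,\R)\times C([0,T)\times O,\R^d)$; establishing $v\in C([0,T]\times O,\R)$ with $v(T,\cdot)=g$ is a separate argument. The paper does this by approximating $g$ with compactly supported continuous functions (via \cite[Corollary 2.4]{beck2021existence}), using the supermartingale bound $\E[V(s,X^x_{t,s})]\leq V(t,x)$ coming from \eqref{eq:V_ass3_2}, the Portemanteau theorem, and a separate bound on $\E[|\int_{t_n}^T f(r,X^x_{t_n,r},w(r,X^x_{t_n,r}))\,\d r|]$ that exploits the factor $\sqrt{T-t}$ in the weight. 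Your proposal omits this step entirely, and it is not automatic. Second, the assertion that the contraction constant is controlled by ``the moment estimate for $V(r,X^x_{t,r})$ provided by \eqref{eq:V_ass3_1}'' is imprecise: closing the contraction on the gradient coordinate requires pairing $V(r,X^x_{t,r})$ with the BEL weight $\zeta^x_{t,r}$, which by Cauchy--Schwarz needs a bound on the \emph{second} moment $\E[V(r,X^x_{t,r})^2]$, not the first. The quadratic term $\tfrac{1}{2}\tfrac{\norm{(\nabla_x V)^*\sigma}^2}{V}$ in \eqref{eq:V_ass3_1} is precisely what controls the drift of $V^2$ along the flow, and your sketch should say this explicitly rather than treating \eqref{eq:V_ass3_1} as if it only governed the first moment.

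Finally, you correctly flag that one must check the composite integrand $y\mapsto f(r,y,v(r,y),w(r,y))$ belongs to $C(O,\R)\cap L^2(O,\R)$ before applying Theorem~\ref{thm:BEL_formula2}, but ``follows from the $L^2$-hypothesis and Lipschitz property of $f$'' does not justify this: the $L^2$ assumption on $f$ is over the joint domain $[0,T]\times O\times\R\times\R^d$ and does not transfer to $L^2$ in $y$ after substituting an unbounded $(v,w)$. This is a genuine gap in your proposal — though, to be fair, the paper's own proof treats the same point with comparable brevity.
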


\begin{proof}[Proof of Theorem~\ref{thm:vs}]
    First note that 
    \cite[Theorem 3.5]{HP2023}
    \eqref{eq:der_mu_sigma_loclip3},
    and \eqref{eq:V_ass3_1}
    prove that there exists 
    a unique 
    $w=(w_1,w_2,\ldots, w_{d+1})
    \in C([0,T)\times O,\R^{d+1})$
    which satisfies 
    \begin{enumerate}[label=(\Roman*)]
        \item\label{it:w_boundedness}
        that
        $\limsup_{r \to \infty}$
        $[\sup_{s \in [0,T)\setminus K_r} 
        \sup_{y \in O \setminus O_r}
        ( \frac{\Vnorm{
                w(s,y)
        }}{V(s,y)} \, \sqrt{T-s} ) ] 
        = 0$,
        \item\label{it:w_integrability}
        for all $t\in [0,T)$, 
        $x\in O$ that
        \begin{equation}
        \E\bigg[\abs{g(X^x_{t,T})}\, 
        \Vnorm{ 
            Z^x_{t,T}}
        + \int_t^T  \abs{f(r, X^x_{t,r}, 
            w(r,X^x_{t,r}))}\, 
        \Vnorm{ Z^x_{t,r} }
        \,\d r \bigg]
        <\infty,
        \end{equation}
        and
        \item\label{it:w_presentation}
        for all $t\in [0,T)$,
        $x\in O$ it holds that
        \begin{equation}\label{eq:v2}
        w(t,x) 
        =\E \left[  g(X^x_{t,T}) Z^x_{t,T}  
        + \int_t^T  f(r, X^x_{t,r}, 
        w(r,X^x_{t,r})) 
        Z^x_{t,r}  \,\d r \right]. 
        \end{equation}
    \end{enumerate}
    Let $v\colon [0,T]\times O
    \to \R$ 
    satisfy for all $t\in [0,T)$,
    $x\in O$ that
    $v(t,x)=w_1(t,x)$
    and $v(T,x)=g(x)$.
    Observe that the fact that 
    $w
    \in C([0,T)\times O,\R^{d+1})$
    implies that
    $v \in  C([0,T)\times O,\R)$.
    To prove that $v$ is 
    continuous in $T$ let 
    $(t_n)_{n\in\N}\subseteq [0,T)$
    satisfy 
    $\limsup_{n\to\infty}
    \abs{t_n-T}=0$.
    Note that
    \cite[Lemma 3.1]{beck2021existence}
    and \eqref{eq:V_ass3_2}
    imply that for all 
    $t\in [0,T]$, $s\in [t,T]$,
    $x\in O$
    it holds that 
    \begin{equation}
    \label{eq:V_bd_startingpoint}
    E[V(s,X^x_{t,s})]
    \leq V(t,x).
    \end{equation}
    Combining this with
    Fubini's theorem 
    and the assumption that
    for all
    $t \in [0,T]$, $x_1,x_2 \in O$, 
    $a_1,a_2\in\R$,
    $w_1,w_2 \in \R^{d}$  
    it holds that
    $\lvert f(t,x_1,a_1,w_1) 
    - f(t,x_2,a_2,w_2) \rvert 
    \leq L\Vnorm{
        (a_1, w_1)
        -(a_2, w_2) }$  
    demonstrates
    that for all $n\in\N$, $x\in O$
    it holds that
    \begin{equation}
    \begin{split}
    &\E\bigg[\bigg| \int_{t_n}^T
    f(r,X^x_{t_n,r}, 
    w(r,X^x_{t_n,r}))
    \,\d r \bigg|\bigg]
    \leq  \E\bigg[\int_{t_n}^T
    \abs{f(r,X^x_{t_n,r}, 
        w(r,X^x_{t_n,r}))}\,\d r 
    \bigg]\\
    &\leq \E\bigg[ \int_{t_n}^T
    \Big[\abs{f(r,X^x_{t_n,r}, 
        w(r,X^x_{t_n,r}))
        -f(r,X^x_{t_n,r},0,0)}
    +\abs{f(r,X^x_{t_n,r},0,0)}\Big]\,\d r 
    \bigg]\\ 
    &\leq  \E\bigg[\int_{t_n}^T
    \Big[L\Vnorm{
        w(r,X^x_{t_n,r})}
    +\abs{f(r,X^x_{t_n,r},0,0)}\Big]\,\d r 
    \bigg]\\ 
    &=\E\Bigg[\int_{t_n}^T
    \bigg[
    \tfrac{L\Vnorm{
            w(r,X^x_{t_n,r})}
        +\abs{f(r,X^x_{t_n,r},0,0)}}
    {V(r,X^x_{t_n,r})}\sqrt{T-r}
    \tfrac{V(r,X^x_{t_n,r})}{\sqrt{T-r}}
    \bigg]\,\d r \Bigg] \\ 
    &\leq 
    \Bigg[\sup_{s\in [0,T)}\sup_{y\in O}
    \tfrac{L\Vnorm{
            w(s,y)}
        +\abs{f(s,y,0,0)}}
    {V(s,y)}\sqrt{T-s} \bigg]    
    \int_{t_n}^T
    \tfrac{E[V(r,X^x_{t_n,r})]}{\sqrt{T-r}}
    \,\d r \\
    &\leq 
    \Bigg[\sup_{s\in [0,T)}\sup_{y\in O}
    \tfrac{L\Vnorm{
            w(s,y)}
        +\abs{f(s,y,0,0)}}
    {V(s,y)}\sqrt{T-s} \bigg]    
    \int_{t_n}^T
    \tfrac{V(t_n,x)}{\sqrt{T-r}}
    \,\d r \\
    &=
    \Bigg[\sup_{s\in [0,T)}\sup_{y\in O}
    \tfrac{L\Vnorm{
            w(s,y)}
        +\abs{f(s,y,0,0)}}
    {V(s,y)}\sqrt{T-s} \bigg] 
    \bigg[\sup_{s\in [0,T]} V(s,x)\bigg]   
    2\sqrt{T-t_n}.
    \end{split} 
    \end{equation}
    Item~\ref{it:w_boundedness},
    the assumption that
    $\inf_{r \in (0, \infty)}$ 
    $[ \sup_{t \in [0,T)\setminus K_r} 
    \sup_{x \in O \setminus O_r} 
    (\frac{\lvert f(t,x,0,0) \rvert}{V(t,x)}
    \sqrt{T-t})] 
    = 0$,
    and the fact that 
    $V\in C^{1,2}([0,T]\times O, (0,\infty))$
    therefore show that
    for all $x\in O$
    it holds that
    \begin{equation}
    \label{eq:f_X_conv}
    \limsup_{n\to\infty}
    \E\bigg[\bigg| \int_{t_n}^T
    f(r,X^x_{t_n,r}, 
    w(r,X^x_{t_n,r}))
    \,\d r \bigg|\bigg]
    =0.
    \end{equation}
    In addition, note that
    \cite[Corollary 2.4]{beck2021existence}
    demonstrates that there exists
    compactly supported
    $\mathfrak{g}_n\in C(O, \R)$, $n \in \N$,
    which satisfy
    \begin{equation}\label{eq:g_approx}
    \limsup \limits_{n \to \infty} \left[ 
    \sup \limits_{x \in O} \left( \frac{\lvert \mathfrak{g}_n(x)-g(x) \rvert}{V(T,x)} \right) \right]
    = 0.
    \end{equation}
    This,
    the triangle inequality,
    and \eqref{eq:V_bd_startingpoint}
    show that
    for all $k,n\in\N$,
    $x\in O$
    it holds that
    \begin{equation}
    \begin{split}
    &\E[\abs{g(X^x_{t_n,T})-g(x)}]\\
    &\leq \E[\abs{g(X^x_{t_n,T})
        -\mathfrak{g}_k(X^x_{t_n,T})}]
    +\E[\abs{\mathfrak{g}_k(X^x_{t_n,T})
        -\mathfrak{g}_k(x)}]
    +\E[\abs{\mathfrak{g}_k(x)-g(x)}]\\
    &= \E\Bigg[\frac{\abs{g(X^x_{t_n,T})
            -\mathfrak{g}_k(X^x_{t_n,T})}}{V(T,X^x_{t_n,T}
        )}V(T,X^x_{t_n,T})\Bigg]
    +\E[\abs{\mathfrak{g}_k(X^x_{t_n,T})
        -\mathfrak{g}_k(x)}]\\
    &\qquad
    +\E\Bigg[
    \frac{\abs{\mathfrak{g}_k(x)
            -g(x)}}{V(T,x)}V(T,x)\Bigg]\\
    &\leq \Bigg[\sup_{y\in O}
    \frac{\abs{g(y)-\mathfrak{g}_k(y)}}{V(T,y)}\Bigg]
    \E[V(T,X^x_{t_n,T})]
    +\E[\abs{\mathfrak{g}_k(X^x_{t_n,T})
        -\mathfrak{g}_k(x)}]\\
    &\qquad 
    +\Bigg[\sup_{y\in O}\frac{\abs{\mathfrak{g}_k(y)
            -g(y)}}{V(T,y)}\Bigg] V(T,x)\\
    &\leq \Bigg[\sup_{y\in O}
    \frac{\abs{g(y)-\mathfrak{g}_k(y)}}{V(T,y)}\Bigg]
    V(t_n,x)
    +\E[\abs{\mathfrak{g}_k(X^x_{t_n,T})-\mathfrak{g}_k(x)}]\\
    &\qquad
    +\Bigg[\sup_{y\in O}\frac{\abs{
            \mathfrak{g}_k(y)
            -g(y)}}{V(T,y)}\Bigg]V(T,x)\\
    &\leq 2\Bigg[\sup_{y\in O}
    \frac{\abs{g(y)-\mathfrak{g}_k(y)}}{V(T,y)}\Bigg]
    \bigg[\sup_{s\in [0,T]} V(s,x)\bigg]
    +\E[\abs{\mathfrak{g}_k(X^x_{t_n,T})
        -\mathfrak{g}_k(x)}].
    \end{split}
    \end{equation}  
    This, 
    \cite[Lemma 3.7]{beck2021existence},
    the Portemonteau theorem, and
    \eqref{eq:g_approx}
    demonstrate for all 
    $x\in O$ that
    \begin{equation}
    \label{eq:g_X_conv}
    \limsup\nolimits_{n\to\infty}
    \E[\abs{g(X^x_{t_n,T})-g(x)}]
    =0.
    \end{equation}   
    Combining this with
    \eqref{eq:f_X_conv}
    proves that
    for all $x\in O$
    it holds that
    \begin{equation}
    \limsup\nolimits_{n\to\infty}
    \abs{v(t_n,x)-g(x)}
    =0.
    \end{equation}
    The assumption that 
    for all $x\in \R^d$
    it holds that
    $v(T,x)=g(x)$ 
    and the fact that
    $v \in  C([0,T)\times O,\R)$
    hence
    demonstrate that
    $v\in C([0,T]\times O, \R)$.
    Next note that
    items~\ref{it:u_diff_BEL}
    and \ref{it:u_der_BEL} of
    Theorem~\ref{thm:BEL_formula2}
    (applied for all 
    $t\in [0,T)$, $r\in (t,T]$
    with $O\curvearrowleft \R^d$,
    $f\curvearrowleft g$
    and
    $T\curvearrowleft r$,
    $f\curvearrowleft (\R^d \ni x
    \mapsto f(r,X^x_{t,r}, 
    w(r,X^x_{t,r}))
    \in \R)$
    in the notation of
    Theorem~\ref{thm:BEL_formula2}),
    Leibniz integral rule,
    Fubini's theorem,
    item~\ref{it:w_integrability},
    and the assumption that
    $f\in L^2([0,T]\times O\times\R\times\R^d,\R)$
    and $g\in L^2(O,\R)$
    show that
    $v\in C^{0,1}([0,T)\times
    O,\R)$
    and for all $t\in [0,T)$,
    $x\in\R^d$
    it holds that
    \begin{equation}
    \begin{split}
    &(\nabla_x v)(t,x)
    = \nabla_x \Big(
    \E[g(X^x_{t,T})]\Big)
    + \nabla_x \bigg(\int_t^T \E[f(r,X^x_{t,r},
    w(r,X^x_{t,r}))] \, \d r\bigg)\\
    &= \E[g(X^x_{t,T})Z^x_{t,T}]
    + \int_t^T \nabla_x\Big(
    \E\left[ f(r,X^x_{t,r},
    w(r,X^x_{t,r})) \right]\Big) \, \d r\\
    &=  \E[g(X^x_{t,T})Z^x_{t,T}]
    + \int_t^T \E\left[ f(r,X^x_{t,r},
    w(r,X^x_{t,r}))Z^x_{t,r} \right]\, \d r\\
    &=  \E[g(X^x_{t,T})Z^x_{t,T}]
    +  \E\bigg[\int_t^T f(r,X^x_{t,r},
    w(r,X^x_{t,r}))Z^x_{t,r}
    \, \d r\bigg]. 
    \end{split}
    \end{equation}   
    Item~\ref{it:w_presentation}
    therefore implies that
    for all $t\in [0,T)$,
    $x\in\R^d$
    it holds that
    $(w_2, w_3, \ldots, w_{d+1})(t,x)
    = (\nabla_x v)(t,x)$.
    This, 
    items~\ref{it:w_boundedness}-\ref{it:w_presentation},
    and the fact that
    $v\in C([0,T]\times O, \R)$
    establish item~\ref{it:vs1}.
    Next we prove items~\ref{it:vs2}
    and \ref{it:vs3}.
    For this let 
    $h\colon [0,T]\times O \to \R$
    satisfy for all $t\in[0,T)$,
    $x\in O$ that
    $h(t,x)=f(t,x,v(t,x), (\nabla_x v)(t,x))$.
    Note that 
    item~\ref{it:w_boundedness},
    the fact that for all
    $t \in [0,T]$, $x \in O$, 
    $a_1,a_2\in\R$, $w_1,w_2 \in \R^{d}$
    it holds that
    $\lvert f(t,x,a_1,w_1) 
    - f(t,x,a_2,w_2) \rvert 
    \leq L \Vnorm{
        (a_1, w_1)
        -(a_2,w_s)} $,
    and the fact that
    $\inf_{r \in (0, \infty)} [ \sup_{t \in [0,T)\setminus K_r} 
    \sup_{x \in O \setminus O_r} $
    $(\frac{\lvert f(t,x,0,0) \rvert}{V(t,x)}
    \sqrt{T-t})] 
    = 0$
    imply that $h\in C([0,T)\times O,\R)$ 
    and 
    \begin{equation}
    \label{eq:h_V_bd}
    \begin{split}
    &\limsup_{r\to\infty}\bigg[\sup_{t\in [0,T)\setminus K_r}\sup_{x\in O\setminus O_r} \bigg(\tfrac{\abs{h(t,x)}}{V(t,x)}\sqrt{T-t}\bigg)\bigg]\\
    &=\limsup_{r\to\infty}\bigg[\sup_{t\in [0,T)\setminus K_r}\sup_{x\in O\setminus O_r} \bigg(\tfrac{\abs{f(t,x,v(t,x),(\nabla_x v)(t,x))}}{V(t,x)}\sqrt{T-t}\bigg)\bigg]\\
    &\leq  \limsup_{r\to\infty}\bigg[\sup_{t\in [0,T)\setminus K_r}\sup_{x\in O\setminus O_r} \bigg(\tfrac{\abs{f(t,x,0,0)}}{V(t,x)}\sqrt{T-t}\\
    &\qquad
    +\tfrac{\abs{f(t,x,v(t,x),(\nabla_x v)(t,x))-f(t,x,0,0)}}{V(t,x)}\sqrt{T-t}
    \bigg)\bigg]\\
    &\leq  \limsup_{r\to\infty}\bigg[\sup_{t\in [0,T)\setminus K_r}\sup_{x\in O\setminus O_r} \bigg(\tfrac{\abs{f(t,x,0,0)}
        +L\Vnorm{ (v,\nabla_x v)(t,x)}}{V(t,x)}\sqrt{T-t}\bigg)\bigg]
    =0.
    \end{split}
    \end{equation}
    Proposition~\ref{prop:vs23},
    \eqref{eq:der_mu_sigma_loclip3},
    \eqref{eq:V_ass3_2},
    and \eqref{eq:v}
    therefore demonstrate that
    $v$ is a viscosity solution of
    \begin{equation}\label{eq:v_vs}
    \begin{split}
    &(\tfrac{\partial v}{\partial t})(t,x)
    +\langle \mu(t,x), (\nabla_x v)(t,x)\rangle
    +\tfrac{1}{2}\operatorname{Tr}(\sigma(t,x)[\sigma(t,x)]^*(\operatorname{Hess}_x v)(t,x))
    +h(t,x)
    =0
    \end{split}
    \end{equation}
    for $(t,x)\in (0,T)\times O$.
    This ensures that for all 
    $t\in(0,T)$, $x\in O$,
    $\phi\in C^{1,2}((0,T)\times O,\R)$
    with $\phi \geq v$ and
    $\phi(t,x)=v(t,x)$
    it holds that
    \begin{equation}
    \label{eq:v_vs1}
    \begin{split}
    &(\tfrac{\partial \phi}{\partial t})(t,x)
    +\langle \mu(t,x), (\nabla_x \phi)(t,x)\rangle
    +\tfrac{1}{2}\operatorname{Tr}(\sigma(t,x)[\sigma(t,x)]^*(\operatorname{Hess}_x \phi)(t,x))
    +h(t,x)
    \geq 0.
    \end{split}
    \end{equation}
    Moreover, observe that 
    \eqref{eq:v_vs}
    shows that for all
    $t\in(0,T)$, $x\in O$,
    $\phi\in C^{1,2}((0,T)\times O, \R)$
    with $\phi\leq v$ and 
    $\phi(t,x)=v(t,x)$
    it holds that
    \begin{equation}\label{eq:v_vs2}
    \begin{split}
    & (\tfrac{\partial \phi}{\partial t})(t,x)
    +\langle \mu(t,x), (\nabla_x \phi)(t,x)\rangle
    +\tfrac{1}{2}\operatorname{Tr}(\sigma(t,x)[\sigma(t,x)]^*(\operatorname{Hess}_x \phi)(t,x))
    +h(t,x)
    \leq 0.
    \end{split}
    \end{equation}
    Combining this with 
    \eqref{eq:v_vs1}
    proves that $v$ is a viscosity
    solution of 
    \begin{equation}
    \label{eq:v_vs3}
    \begin{split}
    &(\tfrac{\partial v}{\partial t})(t,x)
    +\langle \mu(t,x), (\nabla_x v)(t,x)\rangle
    \\
    &\qquad
    +\tfrac{1}{2}\operatorname{Tr}(\sigma(t,x)[\sigma(t,x)]^*(\operatorname{Hess}_x v)(t,x))
    +f(t,x,v(t,x),(\nabla_x v)(t,x))
    =0
    \end{split}
    \end{equation}
    for $(t,x)\in (0,T)\times O$.
    Proposition~\ref{prop:vs3_5},
    (applied with $u_1\curvearrowleft v$
    in the notation of 
    Proposition~\ref{prop:vs3_5}),
    \eqref{eq:V_ass3_2},
    and the fact that 
    $v\in \{\mathbf{u}\in C([0,T]\times O,\R)
    \cap C^{0,1}([0,T)\times O,\R)
    \colon$ $\limsup_{r\to\infty}[
    \sup_{t\in [0,T)\setminus K_r}$
    $\sup_{x\in O\setminus O_r}
    (\frac{(\mathbf{u},\nabla_x\mathbf{u})(t,x)}{V(t,x)}
    \sqrt{T-t})]=0\}$
    therefore establish
    items~\ref{it:vs2}
    and \ref{it:vs3}. 
    The proof of Theorem~\ref{thm:vs}
    is thus complete.
\end{proof}

The following Corollary applies the
results in Theorem~\ref{thm:vs}
to a function $V$ that is
independent of the time component.
The proof of the Corollary~\ref{cor:vs1}
is similar to the one of \cite[Corollary 3.8]{beck2021nonlinear}.

\begin{corollary}\label{cor:vs1}
    Let $d \in \N$, 
    $\alpha, c, L, T \in (0, \infty)$,
    $\rho\in\R$,
    let $\langle\cdot,\cdot\rangle\colon
    \R^d\times\R^d\to\R$ be the standard
    Euclidean scalar product on $\R^d$,
    let $\norm{\cdot}\colon\R^d\to[0,\infty)$
    be the standard Euclidean norm on $\R^d$,
    let $\Vnorm{\cdot}\colon
    \R^{d+1}\to[0,\infty)$
    be the standard Euclidean norm on $\R^{d+1}$,
    let $\norm{\cdot}_F\colon \R^{d\times d}
    \to [0,\infty)$ be the Frobenius norm
    on $\R^{d\times d}$,
    let $O\subseteq \R^d$ be an
    open set,
    for every $r \in (0, \infty)$
    let $K_r\subseteq [0,T)$,  
    $O_r \subseteq O$
    satisfy
    $K_r=[0,\max\{T-\frac{1}{r},0\}]$
    and
    $O_r = \{  x \in O\colon\norm{x} \leq r \text{ and } \{ y \in \R^d\colon \norm{y-x} 
    < \frac{1}{r} \} \subseteq O \}$,
    let $(\Omega, \mathcal{F}, \mathbb{P}, (\mathbb{F}_s)_{s \in [0,T]})$ 
    be a filtered probability space
    satisfying the usual conditions,
    let $W \colon [0,T] \times \Omega \to \R^d$ 
    be a standard $(\mathbb{F}_s)_{s \in [0,T]}$-Brownian motion,
    let $\mu \in  C^{0,1}([0,T] \times O, \R^d)$, 
    $\sigma \in C^{0,1} ([0,T] \times O, \R^{d \times d})$
    satisfy
    for all 
    $s\in[0,T]$, $x, y \in O$, $v\in\R^d$
    that
    \begin{equation}
    \max\Big\{\langle x-y,\mu(s,x)-\mu(s,y)\rangle, 
    \tfrac{1}{2}\norm{\sigma(s,x)-\sigma(s,y)}_F^2\Big\}
    \leq \tfrac{c}{2}\norm{x-y}^2
    \end{equation}
    and
    $v^* \sigma(s,x) (\sigma(s,x))^* v \geq \alpha \norm{v}^2$,
    assume for all $r\in (0,\infty)$,
    $j\in\{1,2,\ldots, d\}$ that
    \begin{equation}     
    \begin{split}
    \sup \bigg(&\bigg\{
    \tfrac{\norm{\frac{\partial \mu}{\partial x}(t,x)-\frac{\partial \mu}{\partial x}(t,y)}_F
        +\norm{\frac{\partial \sigma}{\partial x_j}(t,x)
            -\frac{\partial \sigma}{\partial x_j}(t,y)}_F}{\norm{x-y}}
    \colon\\
    &\qquad
    t\in[0,T], x,y\in O_r, x\neq y \bigg\} 
    \cup \{0\} \bigg)
    <\infty,
    \end{split}
    \end{equation}
    for every  $t\in [0,T]$,
    $x \in O$ let
    $X^x_t = (X^{x}_{t,s})_{s \in [t,T]} \colon [t,T] \times \Omega \to O$   
    be an 
    $(\mathbb{F}_s)_{s \in [t,T]}$-adapted 
    stochastic process with continuous
    sample paths satisfying that for all 
    $s \in [t,T]$ it holds a.s.\! that
    \begin{equation}
    X^x_{t,s} = x + \int_t^s \mu(r, X^x_{t,r}) \,\d r 
    + \int_t^s \sigma(r, X^x_{t,r}) \,\d W_r,
    \end{equation}
    assume for all
    $t\in [0,T]$,
    $\omega \in \Omega$
    that
    $\left([t,T] \times O \ni (s,x) 
    \mapsto X^x_{t,s}(\omega) \in O \right) \in C^{0,1}([t,T] \times O, O)$,
    for every $t\in[0,T]$, 
    $x \in O$ let
    $Z^x_t = (Z^x_{t,s})_{s \in (t,T]} 
    \colon (t,T] \times \Omega \to \R^{d+1}$ 
    be an $(\mathbb{F}_s)_{s \in (t,T]}$-adapted
    stochastic process 
    with continuous sample paths
    satisfying that 
    for all $s \in (t,T]$ 
    it holds a.s.\! that
    \begin{equation}
    Z^x_{t,s} = 
    \begin{pmatrix}
    1\\
    \frac{1}{s-t} \int_t^s
    (\sigma(r,   X^x_{t,r}))^{-1} \; \Big(\frac{\partial}{\partial x} X^x_{t,r}\Big) \,\d W_r
    \end{pmatrix},     
    \end{equation} 
    let $V \in C^{2}(O,(0, \infty))$
    satisfy for all 
    $t\in [0,T]$, $x\in O$
    that 
    \begin{equation}\label{eq:V_ass1_cor1}
    \begin{split}
    &\langle \mu(t, x),
    (\nabla V)(x)\rangle
    +\tfrac{1}{2}\operatorname{Tr}(\sigma(t, x)[\sigma(t, x)]^*(\operatorname{Hess} \!V)(x))\\
    &\qquad
    +\tfrac{1}{2}\tfrac{\norm{[(\nabla V)(x)]^*
            \sigma(t, x)}^2}{V(x)}
    \leq \rho V(x)
    \end{split}
    \end{equation}
    and 
    \begin{equation}\label{eq:V_ass2_cor1}
    \begin{split}
    &\langle \mu(t,x), (\nabla V)(x) \rangle
    +\tfrac{1}{2} \operatorname{Tr}(\sigma(t,x)[\sigma(t,x)]^* (\operatorname{Hess}\!V)(x))
    \\
    &\qquad
    +L\norm{(\nabla V)(x)}
    \leq \rho V(x),
    \end{split}
    \end{equation}
    let $f \in C([0,T] \times O \times \R\times \R^{d}, \R)
    \cap L^2([0,T] \times O \times \R\times \R^{d}, \R)$,
    $g \in C(O, \R) \cap L^2(O, \R)$ 
    satisfy for all
    $t \in [0,T]$, $x_1,x_2 \in O$, 
    $a_1,a_2\in\R$,
    $w_1,w_2 \in \R^{d}$  
    that
    $\lvert f(t,x_1,a_1,w_1) 
    - f(t,x_2,a_2,w_2) \rvert 
    \leq L\Vnorm{
        (a_1, w_1)
        -(a_2, w_2) }$,
    and assume that
    $\inf_{r \in (0, \infty)} [ \sup_{t \in [0,T)\setminus K_r} 
    \sup_{x \in O \setminus O_r} $
    $(\frac{\abs{ g(x)}
        +\abs{f(t,x,0,0)}\sqrt{T-t}}{V(x)} )] 
    = 0$, 
    $\liminf_{r\to\infty}
    [\inf_{x\in O\setminus O_r}$
    $V(x)]=\infty$, 
    and 
    $\inf_{x\in O}
    V(x)$
    $>0$.
    Then 
    \begin{enumerate}[label=(\roman*)]
        \item\label{it:vs1_cor1} 
        there exists a unique 
        $v\in C([0,T]\times O,\R)
        \cap C^{0,1}([0,T)\times O,\R)$
        which satisfies for all 
        $t\in[0,T)$, $x\in O$ that
        $\limsup_{r \to \infty}[
        \sup_{s \in [0,T)\setminus K_r} 
        \sup_{y \in O \setminus O_r}( \frac{\Vnorm{
                (v,\nabla_x v)(s,y)}}{V(y)}
        \sqrt{T-s} ) ] 
        = 0$,
        $\E[\abs{g(X^x_{t,T})} 
        \Vnorm{ Z^x_{t,T}}  
        + \int_t^T  \abs{f(r, X^x_{t,r}, 
            v(r, X^x_{t,r}),
            (\nabla_x v)(r,X^x_{t,r}))} 
        \Vnorm{ Z^x_{t,r}}  \,\d r ]
        <\infty$,
        and
        \begin{equation}\label{eq:v_cor1}
        \begin{split}
        &(v, \nabla_x v)(t,x) 
        =\E \left[  g(X^x_{t,T}) Z^x_{t,T}  
        + \int_t^T  f(r, X^x_{t,r}, v(r, X^x_{t,r}),
        (\nabla_x v)(r,X^x_{t,r})) 
        Z^x_{t,r}  \,\d r \right],
        \end{split} 
        \end{equation}
        \item\label{it:vs2_cor1}
        there exists a unique viscosity solution 
        $u\in\{\mathbf{u}\in C([0,T]\times O,\R)
        \cap C^{0,1}([0,T)\times O,\R)
        \colon \limsup_{r \to \infty}$
        $[
        \sup_{t \in [0,T)\setminus K_r}$ 
        $\sup _{x \in O \setminus O_r}
        ( \frac{\Vnorm{
                (\mathbf{u},\nabla_x\mathbf{u})(t,x)
        }}{V(x)}  
        \, \sqrt{T-t} ) ] = 0\}$ 
        of
        \begin{multline}
        (\tfrac{\partial u}{\partial t})(t,x)
        +\langle \mu(t,x), (\nabla_x u)(t,x)\rangle
        +\tfrac{1}{2}\operatorname{Tr}(\sigma(t,x)[\sigma(t,x)]^*(\operatorname{Hess}_x u)(t,x))
        \\
        +f(t,x,u(t,x), (\nabla_x u)(t,x))
        =0
        \end{multline}
        with $u(T,x)=g(x)$ for
        $(t,x)\in (0,T)\times O$,
        and
        \item\label{it:vs3_cor1} 
        for all $t\in[0,T]$, $x\in O$
        it holds that
        $u(t,x)=v(t,x)$.
    \end{enumerate}   
\end{corollary}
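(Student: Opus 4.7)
The plan is to reduce Corollary~\ref{cor:vs1} to Theorem~\ref{thm:vs} by building a time-dependent Lyapunov function from the given time-independent $V$. Specifically, fix any $\gamma\in[\max\{\rho,0\},\infty)$ and define $\widetilde V\colon[0,T]\times O\to(0,\infty)$ by $\widetilde V(t,x)=e^{\gamma(T-t)}V(x)$. Since $V\in C^2(O,(0,\infty))$, clearly $\widetilde V\in C^{1,2}([0,T]\times O,(0,\infty))$. Then one checks that $\widetilde V$ satisfies the hypotheses of Theorem~\ref{thm:vs} with appropriate constants $K,b\in(0,\infty)$, and the conclusions of Corollary~\ref{cor:vs1} follow directly since the weight $e^{\gamma(T-t)}$ is bounded from above and below on $[0,T]$ by positive constants, so growth conditions expressed through $\widetilde V(t,x)$ are equivalent to those expressed through $V(x)$.

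First I would verify \eqref{eq:V_ass3_2}: a direct computation gives
\begin{equation*}
(\tfrac{\partial}{\partial t}\widetilde V)(t,x)
+\langle\mu(t,x),(\nabla_x\widetilde V)(t,x)\rangle
+\tfrac12\mathrm{Tr}(\sigma\sigma^*(\mathrm{Hess}_x\widetilde V)(t,x))
+L\norm{(\nabla_x\widetilde V)(t,x)}
=e^{\gamma(T-t)}\bigl[-\gamma V(x)+\langle\mu(t,x),(\nabla V)(x)\rangle+\tfrac12\mathrm{Tr}(\sigma\sigma^*(\mathrm{Hess}\,V)(x))+L\norm{(\nabla V)(x)}\bigr],
\end{equation*}
which by \eqref{eq:V_ass2_cor1} is bounded above by $e^{\gamma(T-t)}(\rho-\gamma)V(x)\le 0$. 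Analogously, using that $\frac{\norm{[(\nabla_x\widetilde V)]^*\sigma}^2}{\widetilde V}=e^{\gamma(T-t)}\frac{\norm{[(\nabla V)]^*\sigma}^2}{V}$, assumption \eqref{eq:V_ass1_cor1} yields
\begin{equation*}
(\tfrac{\partial}{\partial t}\widetilde V)(t,x)+\langle\mu(t,x),(\nabla_x\widetilde V)(t,x)\rangle+\tfrac12\mathrm{Tr}(\sigma\sigma^*(\mathrm{Hess}_x\widetilde V)(t,x))+\tfrac12\tfrac{\norm{[(\nabla_x\widetilde V)(t,x)]^*\sigma(t,x)}^2}{\widetilde V(t,x)}\le e^{\gamma(T-t)}(\rho-\gamma)V(x)\le 0,
\end{equation*}
so \eqref{eq:V_ass3_1} holds with arbitrary $K,b\in(0,\infty)$.

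Next I would verify the three growth/boundedness conditions on $\widetilde V$, $g$, and $f$ required by Theorem~\ref{thm:vs}. Since $1\le e^{\gamma(T-t)}\le e^{\gamma T}$ for all $t\in[0,T]$, one has $V(x)\le \widetilde V(t,x)\le e^{\gamma T}V(x)$, whence $\inf_{t\in[0,T]}\inf_{x\in O}\widetilde V(t,x)\ge \inf_{x\in O}V(x)>0$ and $\liminf_{r\to\infty}\inf_{t\in[0,T]}\inf_{x\in O\setminus O_r}\widetilde V(t,x)=\infty$ from the analogous hypothesis on $V$. Moreover, $\widetilde V(T,x)=V(x)$ and $\widetilde V(t,x)\ge V(x)$ imply
\begin{equation*}
\tfrac{\abs{g(x)}}{\widetilde V(T,x)}+\tfrac{\abs{f(t,x,0,0)}}{\widetilde V(t,x)}\sqrt{T-t}\le\tfrac{\abs{g(x)}+\abs{f(t,x,0,0)}\sqrt{T-t}}{V(x)},
\end{equation*}
so the decay hypothesis on $g,f$ in Corollary~\ref{cor:vs1} implies the one in Theorem~\ref{thm:vs}.

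Finally, I would apply Theorem~\ref{thm:vs} with $V\curvearrowleft \widetilde V$ to obtain a unique $v$ with the stated SFPE representation and a unique viscosity solution $u$ satisfying $u=v$, where the admissible classes in Theorem~\ref{thm:vs} (stated with $\widetilde V(s,y)$) and those in Corollary~\ref{cor:vs1} (stated with $V(y)$) coincide because $V(y)\le \widetilde V(s,y)\le e^{\gamma T}V(y)$. There is essentially no hard step: the only thing to watch is the choice of $\gamma$ to absorb $\rho$ (which may be negative, zero, or positive) into the sign-$\le 0$ inequality on the right-hand side of \eqref{eq:V_ass3_2}, and to keep the freely chosen $K,b$ consistent in \eqref{eq:V_ass3_1}; both are handled by taking $\gamma\ge\max\{\rho,0\}$. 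This completes the reduction and therefore the proof.
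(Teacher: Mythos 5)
Your proof is correct and follows essentially the same strategy as the paper: both reduce to Theorem~\ref{thm:vs} by multiplying the time-independent Lyapunov function $V$ by an exponential time weight so that the time-derivative term absorbs the $\rho V$ on the right-hand side (the paper uses $\V(t,x)=e^{-\rho t}V(x)$, which makes the cancellation exact, whereas you use $e^{\gamma(T-t)}V(x)$ with $\gamma\geq\max\{\rho,0\}$, which has the mild advantage of giving $\widetilde V\geq V$ so that the growth and infimum conditions transfer without any constant-tracking). The two choices differ only by a constant factor and by how one handles the sign of $\rho$; the verification steps and the invocation of Theorem~\ref{thm:vs} are the same.
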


\begin{proof}[Proof of Corollary~\ref{cor:vs1}]
    Throughout this proof let
    $\V\colon [0,T]\times O\to \R$
    satisfy for all $t\in [0,T]$,
    $x\in O$ that
    $\V(t,x)= e^{-\rho t} V(x)$.
    Note that the
    product rule and 
    the fact that 
    $V\in C^2(O,(0,\infty))$
    ensure that 
    $\V\in C^{1,2}([0,T]\times O,(0,\infty))$.
    This and
    \eqref{eq:V_ass1_cor1}
    demonstrate that  
    for all $t\in [0,T]$, $x\in O$
    it holds that
    \begin{equation}
    \label{eq:V_prop1}
    \begin{split}
    &(\tfrac{\partial\V}{\partial t})(t,x)
    +\langle \mu(t, x), (\nabla_x \V)(t,x)
    \rangle
    +\tfrac{1}{2}\operatorname{Tr}(\sigma(t, x)[\sigma(t, x)]^*(\operatorname{Hess}_x\!\V)(t,x))\\
    &\qquad
    +\tfrac{1}{2}\tfrac{\norm{(\nabla_x \V)(t, x)\sigma(t, x)}^2_F}{\V(t,x)}\\
    &= e^{-\rho t} \Big(
    -\rho V(x)
    +\langle \mu(t, x),
    (\nabla V)(x)\rangle\\
    &\qquad
    +\tfrac{1}{2}\operatorname{Tr}(\sigma(t, x)[\sigma(t, x)]^*(\operatorname{Hess}\!V)(x))
    +\tfrac{1}{2}\tfrac{\norm{[(\nabla V)(x)]^*
            \sigma(t, x)}^2}{V(x)}\Big)
    \leq 0.
    \end{split}
    \end{equation}   
    Moreover, note that
    \eqref{eq:V_ass2_cor1}   
    shows
    that for all $t\in [0,T]$, $x\in O$
    it holds that
    \begin{equation}
    \label{eq:V_prop2}
    \begin{split}
    &(\tfrac{\partial\V}{\partial t})(t,x)
    +\langle \mu(t, x), 
    (\nabla_x \V)(t,x) \rangle
    +\tfrac{1}{2}\operatorname{Tr}(\sigma(t, x)[\sigma(t, x)]^*(\operatorname{Hess}_x\!\V)(t,x))\\
    &\qquad
    +L\norm{(\nabla_x \V)(x)}\\
    &= e^{-\rho t} \Big(
    -\rho V(x)
    +\langle \mu(t, x), (\nabla V)(x)
    \rangle\\
    &\qquad
    +\tfrac{1}{2}\operatorname{Tr}(\sigma(t, x)[\sigma(t, x)]^*(\operatorname{Hess}\!V)(x))
    +L\norm{(\nabla V)(x)}\Big)
    \leq 0.
    \end{split}
    \end{equation}  
    Furthermore, note that the 
    fact that
    $\inf_{r \in (0, \infty)} [ 
    \sup_{t\in [0,T)\setminus K_r}
    \sup_{x \in O \setminus O_r} 
    (\frac{\lvert  g(x) \rvert }{V(x)}+ \frac{\lvert f(t,x,0,0) \rvert}{V(x)}$
    $\cdot    \sqrt{T-t} 
    )] 
    = 0$
    ensures that
    \begin{equation}
    \label{eq:V_prop3}
    \begin{split}
    &\inf_{r \in (0, \infty)} \bigg[ 
    \sup_{t \in [0,T)\setminus K_r} 
    \sup_{x \in O \setminus O_r} 
    \bigg(\frac{\lvert  g(x) \rvert}{\V(T,x)} 
    +\frac{\lvert f(t,x,0,0) \rvert}{\V(t,x)}
    \sqrt{T-t}\bigg)\bigg] \\
    &= \inf_{r \in (0, \infty)} \bigg[ 
    \sup_{t \in [0,T)\setminus K_r} 
    \sup_{x \in O \setminus O_r} 
    \bigg(\frac{\lvert  g(x) \rvert}{e^{-\rho T} V(x)} 
    +\frac{\lvert f(t,x,0,0) \rvert}{e^{-\rho t} V(x)}
    \sqrt{T-t}\bigg)\bigg]\\
    &\leq e^{\rho T} \inf_{r \in (0, \infty)} \bigg[ 
    \sup_{t \in [0,T)\setminus K_r} 
    \sup_{x \in O \setminus O_r} 
    \bigg(\frac{\lvert  g(x) \rvert}{V(x)} 
    +\frac{\lvert f(t,x,0,0) \rvert}{V(x)}\sqrt{T-t}
    \bigg)\bigg] = 0.
    \end{split}
    \end{equation}
    In addition, observe that 
    the assumption that
    $\liminf_{r\to\infty}
    [\inf_{x\in O\setminus O_r}
    V(x)]$
    $=\infty$ 
    and 
    $\inf_{x\in O}
    V(x)>0$
    guarantee that
    \begin{equation}
    \liminf_{r\to\infty}
    [\inf_{t\in [0,T]}
    \inf_{x\in O\setminus O_r}
    \V(t,x)]=\infty 
    \qquad \text{and}\qquad 
    \inf_{t\in[0,T]}\inf_{x\in O}
    \V(t,x)>0.
    \end{equation}
    Combining this with
    \eqref{eq:V_prop1}-\eqref{eq:V_prop3}
    and Theorem~\ref{thm:vs}
    (applied with 
    $b\curvearrowleft 0$,
    $K\curvearrowleft 0$,
    $V\curvearrowleft \V$
    in the notation of
    Theorem~\ref{thm:vs})
    establishes item~\ref{it:vs1_cor1}-\ref{it:vs3_cor1}.
    The proof of Corollary~\ref{cor:vs1}
    is thus complete.
\end{proof}

In the following corollary we
show that the function $\R^d \ni x \mapsto
(1+\norm{x}^2)^{\frac{p}{2}}$
for $p\in (0,\infty)$
satisfies the conditions 
\eqref{eq:V_ass3_1}
and \eqref{eq:V_ass3_2}
in Theorem~\ref{thm:vs} and can 
therefore - under the right assumptions
on the coefficients $\mu$ and
$\sigma$- ensure that there exists a
solution to the SFPE in 
\eqref{eq:v}
which is also a viscosity solution to the 
corresponding PDE.
The proof of the following corollary, 
Corollary~\ref{cor:vs2}, 
is
similar to the proof of 
\cite[Corollary 3.9]{beck2021nonlinear}.

\begin{corollary}\label{cor:vs2}
    Let $d \in \N$, 
    $\alpha, c, L, T \in (0, \infty)$,
    let $\langle\cdot,\cdot\rangle\colon
    \R^d\times\R^d\to\R$ be the standard
    Euclidean scalar product on $\R^d$,
    let $\norm{\cdot}\colon\R^d\to[0,\infty)$
    be the standard Euclidean norm on $\R^d$,
    let $\Vnorm{\cdot}\colon\R^{d+1}\to[0,\infty)$
    be the standard Euclidean norm on $\R^{d+1}$,
    let $\norm{\cdot}_F\colon \R^{d\times d}
    \to [0,\infty)$ be the Frobenius norm
    on $\R^{d\times d}$,
    let $(\Omega, \mathcal{F}, \mathbb{P}, (\mathbb{F}_s)_{s \in [0,T]})$ 
    be a filtered probability space
    satisfying the usual conditions,
    let $W \colon [0,T] \times \Omega \to \R^d$ 
    be a standard $(\mathbb{F}_s)_{s \in [0,T]}$-Brownian motion,
    let $\mu \in  C^{0,1}([0,T] \times \R^d, \R^d)$, 
    $\sigma \in C^{0,1} ([0,T] \times \R^d, \R^{d \times d})$
    satisfy
    for all 
    $s\in[0,T]$, $x, y \in \R^d$, $v\in\R^d$
    that
    \begin{equation}
    \label{eq:mu_sigma_loclip3}
    \max\Big\{\langle x-y,\mu(s,x)-\mu(s,y)\rangle, 
    \tfrac{1}{2}\norm{\sigma(s,x)-\sigma(s,y)}_F^2\Big\}
    \leq \tfrac{c}{2}\norm{x-y}^2,
    \end{equation}
    $\max \{
    \langle x, \mu(t,x)\rangle,
    \norm{\sigma(t,x)}_F^2
    \}
    \leq c (1+\norm{x}^2)$,
    and
    $v^* \sigma(s,x) (\sigma(s,x))^* v \geq \alpha \norm{v}^2$,
    assume for all $r\in (0,\infty)$,
    $j\in\{1,2,\ldots, d\}$ that
    \begin{equation}     
    \begin{split}
    \sup \bigg(\bigg\{
    &\tfrac{\norm{\frac{\partial \mu}{\partial x}(t,x)-\frac{\partial \mu}{\partial x}(t,y)}_F
        +\norm{\frac{\partial \sigma}{\partial x_j}(t,x)
            -\frac{\partial \sigma}{\partial x_j}(t,y)}_F}{\norm{x-y}}
    \colon\\
    &\qquad
    t\in[0,T], 
    x,y \in \{ z\in \R^d,
    \norm{z}\leq r\}, 
    x\neq y \bigg\} 
    \cup \{0\} \bigg)
    <\infty,
    \end{split}
    \end{equation}
    for every  $t\in [0,T]$,
    $x \in \R^d$ let
    $X^x_t = (X^{x}_{t,s})_{s \in [t,T]} \colon [t,T]
    \times \Omega \to \R^d$   
    be an 
    $(\mathbb{F}_s)_{s \in [t,T]}$-adapted 
    stochastic process with continuous
    sample paths satisfying that for all 
    $s \in [t,T]$ it holds a.s.\! that
    \begin{equation}
    X^x_{t,s} = x + \int_t^s \mu(r, X^x_{t,r}) \,\d r 
    + \int_t^s \sigma(r, X^x_{t,r}) \,\d W_r,
    \end{equation}
    assume for all
    $t\in [0,T]$,
    $\omega \in \Omega$
    that
    $\left([t,T] \times \R^d \ni (s,x) 
    \mapsto X^x_{t,s}(\omega) \in \R^d \right) \in C^{0,1}([t,T]$ 
    $\times \R^d, \R^d)$,
    for every $t\in[0,T]$, 
    $x \in \R^d$ let
    $Z^x_t = (Z^x_{t,s})_{s \in (t,T]} 
    \colon (t,T] \times \Omega \to \R^{d+1}$ 
    be an $(\mathbb{F}_s)_{s \in (t,T]}$-adapted
    stochastic process 
    with continuous sample paths
    satisfying that 
    for all $s \in (t,T]$ 
    it holds a.s.\! that
    \begin{equation}
    Z^x_{t,s} = 
    \begin{pmatrix}
    1\\
    \frac{1}{s-t} \int_t^s
    (\sigma(r,   X^x_{t,r}))^{-1} \; \Big(\frac{\partial}{\partial x} X^x_{t,r}\Big) \,\d W_r
    \end{pmatrix},     
    \end{equation} 
    let $f \in C([0,T] \times \R^d \times \R\times \R^{d}, \R)
    \cap L^2([0,T] \times \R^d \times \R\times \R^{d}, \R)$,
    $g \in C(\R^d, \R) \cap L^2(\R^d, \R)$
    be at most polynomially growing,
    and assume for all
    $t \in [0,T]$, $x_1,x_2 \in \R^d$, 
    $a_1,a_2\in\R$,
    $w_1,w_2 \in \R^{d}$  
    that
    $\lvert f(t,x_1,a_1,w_1) 
    - f(t,x_2,a_2,w_2) \rvert 
    \leq L\Vnorm{
        (a_1, w_1)
        -(a_2, w_2)}$.
    Then 
    \begin{enumerate}[label=(\roman*)]
        \item\label{it:vs1_cor2} 
        there exists a unique 
        $v\in C([0,T]\times \R^d,\R)
        \cap C^{0,1}([0,T)\times \R^d,\R)$
        which satisfies that
        $((v, \nabla_x v)(t,x)$
        $\cdot\sqrt{T-t})_{t\in [0,T), x\in\R^d}$
        grows at most polynomially 
        and for all 
        $t\in[0,T)$, $x\in \R^d$
        it holds that
        $\E[\abs{g(X^x_{t,T})} 
        \Vnorm{ Z^x_{t,T}}  
        + \int_t^T  \abs{f(r, X^x_{t,r}, 
            v(r, X^x_{t,r}),
            (\nabla_x v)(r,X^x_{t,r}))}
         \Vnorm{ Z^x_{t,r}}  \,\d r ]
        <\infty$
        and
        \begin{equation}\label{eq:v_cor2}
        \begin{split}
        &(v, \nabla_x v)(t,x) 
        =\E \left[  g(X^x_{t,T}) Z^x_{t,T}  
        + \int_t^T  f(r, X^x_{t,r}, v(r, X^x_{t,r}),
        (\nabla_x v)(r,X^x_{t,r})) 
        Z^x_{t,r}  \,\d r \right], 
        \end{split}
        \end{equation}
        \item\label{it:vs2_cor2}
        there exists a unique viscosity solution 
        $u\in\{\mathbf{u}\in C([0,T]\times \R^d,\R)
        \cap C^{0,1}([0,T)\times \R^d,\R)
        \colon$ $((\mathbf{u},\nabla_x\mathbf{u})(t,x)\sqrt{T-t})_{t\in [0,T), x\in\R^d}$
        $\text{ grows at most polynomially}\} $
        of
        \begin{multline}
        (\tfrac{\partial u}{\partial t})(t,x)
        +\langle \mu(t,x), (\nabla_x u)(t,x)\rangle
        +\tfrac{1}{2}\operatorname{Tr}(\sigma(t,x)[\sigma(t,x)]^*(\operatorname{Hess}_x u)(t,x))
        \\
        +f(t,x,u(t,x), (\nabla_x u)(t,x))
        =0
        \end{multline}
        with $u(T,x)=g(x)$ for
        $(t,x)\in (0,T)\times \R^d$,
        and
        \item\label{it:vs3_cor2} 
        for all $t\in[0,T]$, $x\in \R^d$
        it holds that
        $u(t,x)=v(t,x)$.
    \end{enumerate}   
\end{corollary}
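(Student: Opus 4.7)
The proof proceeds by specializing Corollary~\ref{cor:vs1} to the concrete Lyapunov function $V(x) := (1+\norm{x}^2)^{p/2}$ for a sufficiently large exponent $p\in[2,\infty)$. Since $O=\R^d$, the set $O_r$ coincides with the closed ball of radius $r$. Obviously $V\in C^2(\R^d,(0,\infty))$, $\inf_{x\in\R^d}V(x)=1>0$, and $V(x)\to\infty$ as $\norm{x}\to\infty$, so two of the three structural hypotheses of Corollary~\ref{cor:vs1} are automatic.

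The main work is to verify the two Lyapunov inequalities \eqref{eq:V_ass1_cor1} and \eqref{eq:V_ass2_cor1}. A direct computation yields
\begin{equation*}
(\nabla V)(x) = p(1+\norm{x}^2)^{\frac{p}{2}-1}x, \qquad (\operatorname{Hess}V)(x) = p(1+\norm{x}^2)^{\frac{p}{2}-1}I_d + p(p-2)(1+\norm{x}^2)^{\frac{p}{2}-2}xx^{*}.
\end{equation*}
Plugging these into the left-hand sides of \eqref{eq:V_ass1_cor1} and \eqref{eq:V_ass2_cor1} and invoking the growth assumptions $\langle x,\mu(t,x)\rangle\leq c(1+\norm{x}^2)$, $\norm{\sigma(t,x)}_F^2\leq c(1+\norm{x}^2)$, together with the deterministic estimate $\norm{\sigma(t,x)^{*}x}^2\leq\norm{x}^2\norm{\sigma(t,x)}_F^2$, each term is dominated by a constant multiple of $(1+\norm{x}^2)^{p/2}=V(x)$. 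For the Lipschitz-gradient contribution in \eqref{eq:V_ass2_cor1} one uses $L\norm{(\nabla V)(x)}\leq Lp(1+\norm{x}^2)^{(p-1)/2}\leq LpV(x)$. This yields both inequalities with some constant $\rho=\rho(c,L,p,d)\in\R$, uniformly in $t$.

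The remaining hypothesis of Corollary~\ref{cor:vs1} is the vanishing of $(\abs{g(x)}+\abs{f(t,x,0,0)}\sqrt{T-t})/V(x)$ on $([0,T)\setminus K_r)\times(\R^d\setminus O_r)$ as $r\to\infty$. Since $g$ and $f(\cdot,\cdot,0,0)$ are assumed to be at most polynomially growing, choosing $p$ strictly larger than the maximum of their polynomial degrees is enough to push this supremum to zero. Corollary~\ref{cor:vs1} then directly delivers the existence of a unique $v$ satisfying \eqref{eq:v_cor2} together with the $V$-weighted bound, the existence of a unique viscosity solution $u$ in the corresponding $V$-weighted class, and the identity $u=v$.

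The last step is the translation between the $V$-weighted vanishing condition of Corollary~\ref{cor:vs1} and the at-most-polynomial growth stated in item~\ref{it:vs1_cor2}: any function whose product with $\sqrt{T-t}$ grows at most polynomially (say, of degree $k$) lies in the $V$-weighted class as soon as $p>k$, and conversely every function in the $V$-weighted class grows at most polynomially. Existence in the polynomial class follows from the first direction; uniqueness within the polynomial class is obtained by taking $p$ large enough to dominate both candidate solutions simultaneously and applying the Corollary~\ref{cor:vs1} uniqueness statement. The principal technical obstacle I expect is bookkeeping in the Hessian estimate, where the cross term $p(p-2)(1+\norm{x}^2)^{p/2-2}xx^{*}$ paired with $\sigma(t,x)[\sigma(t,x)]^{*}$ must be carefully absorbed into $\rho V(x)$; the remaining computations are essentially routine Itô-type estimates.
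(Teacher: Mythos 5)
Your proposal is correct and follows essentially the same route as the paper: both specialize Corollary~\ref{cor:vs1} with the power-law Lyapunov function $V_q(x)=(1+\norm{x}^2)^{q/2}$, compute $\nabla V_q$ to bound the gradient-Lipschitz term, verify the two Lyapunov inequalities from the coercivity assumptions on $\mu$ and $\sigma$, and then choose the exponent large enough to dominate the polynomial growth of $f,g$ and (for uniqueness) of the two candidate solutions simultaneously. The only cosmetic difference is that the paper delegates the estimate $\langle\mu,\nabla V_q\rangle+\tfrac12\operatorname{Tr}(\sigma\sigma^*\operatorname{Hess}V_q)\le cq\max\{q+1,3\}V_q$ to \cite[Lemma 3.3]{beck2021existence}, whereas you reproduce it by a direct computation of $\operatorname{Hess}V_q$; both versions rest on the same elementary bounds.
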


\begin{proof}[Proof of Corollary~\ref{cor:vs2}]
    Throughout this proof let
    $V_q\colon\R^d\to (0,\infty)$,
    $q\in (0,\infty)$,
    satisfy for all $q\in (0,\infty)$,
    $x\in \R^d$ that
    $V_q(x)=(1+\norm{x}^2)^{\frac{q}{2}}$.
    First 
    note that
    \cite[Lemma 3.3]{beck2021existence}
    (applied for every $q\in (0,\infty)$
    with $p\curvearrowleft q$,
    $\mathcal{O}\curvearrowleft \R^d$
    in the notation of
    \cite[Lemma 3.3]{beck2021existence})
    and the assumption that
    for all $t\in [0,T]$, $x\in\R^d$
    it holds that
    $\max \{
    \langle x, \mu(t,x)\rangle,
    \norm{\sigma(t,x)}_F^2
    \}
    \leq c (1+\norm{x}^2)$
    demonstrate that 
    \begin{enumerate}[label=(\Roman*)]
        \item \label{it:V_q_diff}
        for all $q\in (0,\infty)$
        it holds that $V_q\in 
        C^\infty(\R^d, (0,\infty))$ and
        \item \label{it:V_q_ineq}
        for all $q\in (0,\infty)$,
        $t\in [0,T]$, $x\in \R^d$
        it holds that
        \begin{equation}
        \begin{split}
        &\langle \mu(t,x), (\nabla V_q)(x) \rangle
        +\tfrac{1}{2}\operatorname{Tr}(\sigma(t,x)[\sigma(t,x)]^*(\operatorname{Hess} V_q)(x))\\
        &\leq \tfrac{cq}{2}
        \max\{q+1, 3\} V_q(x).
        \end{split}
        \end{equation}
    \end{enumerate}
    Observe that item~\ref{it:V_q_diff}
    and
    the product rule
    imply that 
    for all 
    $q\in (0,\infty)$, 
    $x\in\R^d$
    it holds that
    \begin{equation}
    \label{eq:V_derivatives1}
    \begin{split}
    (\nabla V_q)(x)
    = qx (1+\norm{x}^2)^{\frac{q}{2}-1}
    = q V_q(x) \tfrac{x}{1+\norm{x}^2}.
    \end{split}
    \end{equation}
    The fact that for all $a \in [0,\infty)$
    it holds that
    $a \leq 1+ a^2$
    therefore shows that
    for all $q\in (0,\infty)$,
    $x\in \R^d$
    it holds that
    \begin{equation}
    \begin{split}
    &\norm{(\nabla V_q)(x)}
    = q V_q(x) \tfrac{\norm{x}}{1+ \norm{x}^2}
    \leq q V_q(x). 
    \end{split}
    \end{equation}
    Combining this with
    item~\ref{it:V_q_ineq}
    proves that
    for all 
    $q\in (0,\infty)$,
    $t\in [0,T]$, $x\in O$
    it holds that 
    \begin{equation}\label{eq:V_q_ineq2}
    \begin{split}
    &\langle \mu(t,x), (\nabla V_q)(x)\rangle
    +\tfrac{1}{2} \operatorname{Tr}(\sigma(t,x)[\sigma(t,x)]^* (\operatorname{Hess} V_q)(x))
    \\
    &\qquad
    +L\norm{(\nabla V_q)(x)}
    \leq (\tfrac{cq}{2}
    \max\{q+1, 3\}+Lq) V_q(x).
    \end{split}
    \end{equation}
    Moreover, note that 
    \eqref{eq:V_derivatives1}
    and
    the assumption that
    for all $t\in [0,T]$, $x\in\R^d$
    it holds that
    $\max \{
    \langle x, \mu(t,x)\rangle,
    \norm{\sigma(t,x)}_F^2
    \}
    \leq c (1+\norm{x}^2)$
    demonstrate that
    for all $q\in (0,\infty)$,
    $t\in [0,T]$, $x\in \R^d$
    it holds that
    \begin{equation}
    \begin{split}
    &\frac{\norm{[(\nabla V_q)(x)]^* 
            \sigma(t,x)}^2}{V_q(x)}
    = q^2 \frac{\abs{V_q(x)}^2
        \norm{ x^* \sigma(t,x)}^2}{V_q(x)(1+\norm{x}^2)^2}\\
    &\leq q^2 V_q(x) \frac{
        \norm{x}^2 \norm{\sigma(t,x)}_F^2}{(1+\norm{x}^2)^2}
    \leq c q^2 V_q(x) \frac{
        \norm{x}^2 
        (1+\norm{x}^2)}{(1+\norm{x}^2)^2}
    \leq c q^2 V_q(x) .
    \end{split}
    \end{equation}
    Item~\ref{it:V_q_ineq}
    hence imples that for all
    $q\in (0,\infty)$,
    $t\in [0,T]$, $x\in \R^d$
    it holds that
    \begin{equation}\label{eq:V_q_ineq1}
    \begin{split}
    &\langle\mu(t, x),
    (\nabla V_q)(x) \rangle
    +\tfrac{1}{2}\operatorname{Tr}(\sigma(t, x)[\sigma(t, x)]^*(\operatorname{Hess} V_q)(x))\\
    &\qquad
    +\tfrac{1}{2}\tfrac{\norm{[(\nabla V_q)(x)]^*
            \sigma(t, x)}^2_F}{V_q(x)}\\
    &\leq \tfrac{cq}{2}
    \max\{q+1, 3\} V_q(x)
    + \tfrac{c q^2}{2} V_q(x)
    \leq cq \max\{q+1, 3\} V_q(x).
    \end{split}
    \end{equation}
    Combining this with
    \eqref{eq:V_q_ineq2}
    ensures that
    for all 
    $q\in (0,\infty)$
    there exists $\rho_q\in [0,\infty)$
    which satisfies for all
    $t\in [0,T]$, $x\in \R^d$
    that
    \begin{equation}
    \label{eq:V_q_ineq_rho1}
    \begin{split}
    &\langle \mu(t,x), (\nabla V_q)(x)\rangle
    +\tfrac{1}{2} \operatorname{Tr}(\sigma(t,x)[\sigma(t,x)]^* (\operatorname{Hess} V_q)(x))
    \\
    &\qquad
    +L\norm{(\nabla  V_q)(x)}
    \leq \rho_q V_q(t,x)
    \end{split}
    \end{equation}
    and
    \begin{equation}
    \label{eq:V_q_ineq_rho2}
    \begin{split}
    &\langle \mu(t, x), (\nabla V_q)(x)
    \rangle
    +\tfrac{1}{2}\operatorname{Tr}(\sigma(t, x)[\sigma(t, x)]^*(\operatorname{Hess} V_q)(x))\\
    &\qquad
    +\tfrac{1}{2}\tfrac{\norm{[(\nabla V_q)(x)]^*
            \sigma(t, x)}^2_F}{V_q(x)}
    \leq \rho_q V_q(x).
    \end{split}
    \end{equation}
    Next
    note that
    for all $q\in (0,\infty)$
    it holds that
    \begin{equation}
    \label{eq:V_q_prop}
    \liminf\nolimits_{r\to\infty}
    [\inf\nolimits_{x\in \R^d, \norm{x}>r}
    V_q(x)]=\infty  
    \qquad \text{and} \qquad 
    \inf\nolimits_{x\in \R^d}
    V(x)>0.
    \end{equation}
    Furthermore, observe that
    the assumption that $f$ and $g$
    are at most polynomially growing
    ensures that there exists
    $p\in (0,\infty)$ which satisfies
    \begin{equation}
    \sup_{t\in [0,T]}\sup_{x\in\R^d}
    \bigg(\frac{\abs{g(x)}
        +\abs{f(t,x,0,0)}\sqrt{T-t}}{V_p(x)}\bigg)
    <\infty.
    \end{equation}
    This shows that for all 
    $q\in [p,\infty)$
    it holds that
    \begin{equation}
    \limsup_{r\to\infty}
    \bigg[\sup_{t\in [\max\{T-\frac{1}{r},0\},T)}
    \sup_{x\in \R^d, \norm{x}>r} \bigg(
    \frac{\abs{g(x)+f(t,x,0,0)\sqrt{T-t}}}{V_q(x)}
    \bigg) \bigg]
    =0.
    \end{equation}
    Combining this with
    \eqref{eq:V_q_ineq_rho1},
    \eqref{eq:V_q_ineq_rho2},
    \eqref{eq:V_q_prop},
    and
    item~\ref{it:vs2}
    of Corollary~\ref{cor:vs1}
    (applied with
    $\rho\curvearrowleft \rho_{2p}$,
    $O\curvearrowleft \R^d$,
    $V\curvearrowleft V_{2p}$
    in the notation of 
    Corollary~\ref{cor:vs1})
    demonstrates that
    there exists a unique viscosity solution 
    $u\in\{\mathbf{u}\in C([0,T]\times \R^d,\R)
    \colon \limsup_{r \to \infty}$
    $[
    \sup_{t \in [0,T)\setminus K_r}
    \sup _{x \in \R^d, \norm{x}>r}
    ( \frac{\Vnorm{ (\mathbf{u},\nabla_x\mathbf{u})(t,x)}}{V_{2p}(x)}  
    \, \sqrt{T-t} ) ] = 0\}$ 
    of
    \begin{equation}
    \label{eq:u_is_vs}
    \begin{split}
    &(\tfrac{\partial u}{\partial t})(t,x)
    +\langle \mu(t,x), (\nabla_x u)(t,x) \rangle
    +\tfrac{1}{2}\operatorname{Tr}(\sigma(t,x)[\sigma(t,x)]^*(\operatorname{Hess}_x u)(t,x))
    \\
    &\qquad
    +f(t,x,u(t,x), (\nabla_x u)(t,x))
    =0
    \end{split}
    \end{equation}
    with $u(T,x)=g(x)$ for
    $(t,x)\in (0,T)\times \R^d$.
    Let $v\in C([0,T]\times\R^d,\R)$
    satisfy that 
    $((v,\nabla_x v)(t,x)$
    $\cdot\sqrt{T-t})_{t\in [0,T), x\in\R^d}$
    grows at most polynomially
    and be a viscosity solution of
    \begin{equation}
    \label{eq:v_is_vs}
    \begin{split}
    &(\tfrac{\partial v}{\partial t})(t,x)
    +\langle \mu(t,x), (\nabla_x v)(t,x)\rangle
    +\tfrac{1}{2}\operatorname{Tr}(\sigma(t,x)[\sigma(t,x)]^*(\operatorname{Hess}_x v)(t,x))
    \\
    &\qquad
    +f(t,x,v(t,x), (\nabla_x v)(t,x))
    =0
    \end{split}
    \end{equation}
    with $v(T,x)=g(x)$ for
    $(t,x)\in (0,T)\times \R^d$.
    Observe that
    the assumption that
    $((v, \nabla_x v)(t,x)$
    $\cdot\sqrt{T-t})_{t\in [0,T), x\in\R^d}$
    grows at most polynomially
    ensures that there exists
    $\beta\in [2p,\infty)$
    which satisfies that
    \begin{equation}
    \limsup_{r \to \infty}\bigg[
    \sup_{t \in [0,T)\setminus K_r}
    \sup _{x \in \R^d, \norm{x}>r}
    \bigg( \frac{\Vnorm{ (v, \nabla_x v)
            (t,x)}}{V_\beta(x)}  
    \, \sqrt{T-t} \bigg) \bigg] 
    = 0.
    \end{equation}
    Item~\ref{it:vs2_cor1}
    of Corollary~\ref{cor:vs1}
    (applied with
    $\rho\curvearrowleft \rho_\beta$,
    $O\curvearrowleft \R^d$,
    $V\curvearrowleft V_\beta$
    in the notation of 
    Corollary~\ref{cor:vs1}),
    \eqref{eq:u_is_vs},
    and \eqref{eq:v_is_vs}
    therefore demonstrate that
    $u=v$. This establishes
    item~\ref{it:vs2_cor2}.
    Next observe that item~\ref{it:vs1_cor1}
    of Corollary~\ref{cor:vs1}
    (applied with 
    $\rho\curvearrowleft \rho_{2p}$,
    $O\curvearrowleft \R^d$,
    $V \curvearrowleft V_{2p}$
    in the notation of 
    Corollary~\ref{cor:vs1})
    shows that
    for all $t\in [0,T]$, $x\in \R^d$
    it holds that
    $\E[\abs{g(X^x_{t,T})} 
    \Vnorm{ Z^x_{t,T}} 
    + \int_t^T  \abs{f(r, X^x_{t,r}, 
        u(r, X^x_{t,r}),
        (\nabla_x u)(r,X^x_{t,r}))} 
    \Vnorm{ Z^x_{t,r}}  \,\d r ]
    <\infty$
    and
    \begin{equation}\label{eq:u_is_sfpe_sol}
    \begin{split}
    &(u, \nabla_x u)(t,x) 
    =\E \left[  g(X^x_{t,T}) Z^x_{t,T}  
    + \int_t^T  f(r, X^x_{t,r}, u(r, X^x_{t,r}),
    (\nabla_x u)(r,X^x_{t,r})) 
    Z^x_{t,r}  \,\d r \right].
    \end{split}  
    \end{equation}
    Let $w\in C([0,T]\times\R^d,\R)$
    satisfy 
    that $((w, \nabla_x w)
    (t,x)\sqrt{T-t})_{t\in [0,T), x\in\R^d}$
    grows at most polynomially
    and that for all $t\in [0,T]$,
    $x\in \R^d$ it holds that
    \begin{equation}
       \E[\abs{g(X^x_{t,T})} \Vnorm{ Z^x_{t,T}} 
       + \int_t^T  \abs{f(r, X^x_{t,r}, 
           w(r, X^x_{t,r}),
           (\nabla_x w)(r,X^x_{t,r}))} 
       \Vnorm{ Z^x_{t,r}}  \,\d r ]
       <\infty
    \end{equation} 
    and
    \begin{equation}
    \label{eq:w_is_sfpe_sol}
    \begin{split}
    &(w, \nabla_x w)(t,x) 
    =\E \left[  g(X^x_{t,T}) Z^x_{t,T}  
    + \int_t^T  f(r, X^x_{t,r}, w(r, X^x_{t,r}),
    (\nabla_x w)(r,X^x_{t,r})) 
    Z^x_{t,r}  \,\d r \right].
    \end{split}
    \end{equation}
    Note that
    the fact that
    $((w, \nabla_x w)(t,x)
    \sqrt{T-t})_{t\in [0,T), x\in\R^d}$
    grows at most polynomially
    implies that there exists
    $\gamma\in [\beta,\infty)$
    which satisfies that
    \begin{equation}
    \limsup_{r \to \infty}\bigg[
    \sup_{t \in [0,T)\setminus K_r}
    \sup _{x \in \R^d, \norm{x}>r}
    \bigg( \frac{\Vnorm{ (w, \nabla_x w)
            (t,x)}}{V_\gamma(t,x)}  
    \, \sqrt{T-t} \bigg) \bigg] = 0.
    \end{equation}
    Items~\ref{it:vs1_cor1}
    and \ref{it:vs3_cor1}
    in Corollary~\ref{cor:vs1}
    (applied with $\rho\curvearrowleft \rho_\gamma$,
    $O\curvearrowleft \R^d$,
    $V\curvearrowleft V_\gamma$
    in the notation of
    Corollary~\ref{cor:vs1}),
    \eqref{eq:u_is_sfpe_sol},
    and
    \eqref{eq:w_is_sfpe_sol}
    therefore
    demonstrate that
    $u=v=w$.
    This establishes items~\ref{it:vs1_cor2}
    and \ref{it:vs3_cor2}.
    The proof of Corollary~\ref{cor:vs2}
    is thus complete.
\end{proof}
\subsection*{Acknowledgements}
This work has been funded by the
Deutsche Forschungsgemeinschaft
(DFG, German Research Foundation)
through the research grant
HU1889/6-2.

\bibliographystyle{acm}
\bibliography{bibfile}
\end{document}